\DeclareMathOperator*{\argmin}{arg\,min}
\newcommand\independent{\protect\mathpalette{\protect\independenT}{\perp}}
\def\independenT#1#2{\mathrel{\rlap{$#1#2$}\mkern2mu{#1#2}}}
\newtheorem{theorem}{Theorem}
\newtheorem{proposition}{Proposition}
\theoremstyle{definition}
\newtheorem{example}{Example}
\newtheorem{remark}{Remark}
\newtheorem{definition}{Definition}
\def\<{\langle}
\def\>{\rangle}
\begin{document}
\begin{frontmatter}

\title{Description of the symmetric convex random closed sets as
zonotopes from their Feret diameters}

\author{\inits{S.}\fnm{Sa\"{\i}d}\snm{Rahmani}\corref{cor1}}\email{said.rahmani@emse.fr}
\cortext[cor1]{Corresponding author.}
\author{\inits{J.-C.}\fnm{Jean-Charles}\snm{Pinoli}}\email{pinoli@emse.fr}
\author{\inits{J.}\fnm{Johan}\snm{Debayle}}\email{debayle@emse.fr}

\address{\'{E}cole Nationale Sup\'erieure des Mines de Saint-Etienne, SPIN/LGF UMR CNRS~5307, 158 Cours Fauriel, 42023 Saint-Etienne, France}

\markboth{S. Rahmani et al.}{Description of a random symmetric convex set by a random zonotope}

\begin{abstract}
In this paper, the 2-D random closed sets (RACS) are studied by means
of the Feret diameter, also known as the caliper diameter. More
specifically, it is shown that a 2-D symmetric convex RACS can be
approximated as precisely as we want by some random zonotopes
(polytopes formed by the Minkowski sum of line segments) in terms of
the Hausdorff distance. Such an approximation is fully defined from the
Feret diameter of the 2-D convex RACS. Particularly, the moments of the
random vector representing the face lengths of the zonotope
approximation are related to the moments of the Feret diameter random
process of the RACS.
\end{abstract}

\begin{keyword}
Zonotopes\sep
random closed set\sep
the Feret diameter\sep
polygonal approximation
\MSC[2010] 60Dxx\sep52A22
\end{keyword}

\received{24 October 2016}
%
\revised{7 December 2016}
%
\accepted{14 December 2016}
\publishedonline{3 January 2017}
\end{frontmatter}

\section{Introduction}

\subsection{Context and objectives}
\sloppy The geometrical characterization of granular media (grains,
pores, fibers, etc.) is an important issue in materials and process
sciences. Indeed, several granular media can be modeled as random sets
where the heterogeneity of the particles is studied with a
probabilistic approach \citep{theseGalerne,thesepeyrega}. In this
context, the random closed sets (RACSs) have been particularly studied
\citep
{torquato2002random,molch95,chiu2013stochastic,ballanisurfacepaircorel}
to get geometrical characteristics of such granular media. A RACS
denotes a random variable defined on a probability space $ (\varOmega
,\mathfrak{A},P)$ valued in $(\mathbb{F},\mathfrak{F})$, the family of
closed subsets of $\mathbb{R}^d$
provided with the $\sigma$-algebra $\mathfrak{F}:=\sigma\lbrace\lbrace
F\in\mathbb{F}\ \vert F\cap X\neq\emptyset\rbrace\,X\in\mathfrak
{K}\rbrace$, where $\mathfrak{K}$ denotes the class of compact subsets
on $\mathbb{R}^d$.
In a probabilistic point of view, the distribution of a convex RACS is
uniquely determined from the Choquet capacity functional \citep
{molchanov1994asymptotic,heinrich1999central}.
However, such a description is not suitable for explicitly determining
the geometrical shape of the RACS. An alternative way is to describe a
RACS by the probability distribution of real-valued geometrical
characteristics (area, perimeter, diameters, etc.).

\subsection{Original contribution}

The aim of this paper is to show how such characteristics can be used
to describe the geometrical shape of a convex random closed set in
$\mathbb{R}^2$. It has already been shown \cite{GSI} that the moments
of the Feret diameter of a~convex random closed set in $\mathbb{R}^2$
can be obtained by the area measures on morphological transforms of it.
A Feret diameter (also known as caliper diameter) is a measure of a set
size along a specified direction. It can be defined as the distance
between the two parallel planes restricting the set perpendicular to
that direction.

A set $X\in\mathbb{R}^2$ is said to be central symmetric or, more
simply, symmetric if it is equal to the set $\breve{X}:=-X$. Note that
the Feret diameter is not sensitive to such a central symmetrization
\cite{molchanovLivre}. Indeed, for a nonempty compact convex set
$X\subset\mathbb{R}^2$, its symmetrized set $\frac{1}{2}(X\oplus\breve
{X})$ (see \citep{molch95,hoffmann2007weak}) has the same Feret
diameter as $X$.
Consequently, the Feret diameter of a convex set $X$ is not enough to
fully reconstruct $X$ (but only its symmetrized set).
However, the Feret diameter is still useful to describe the shape of
convex sets for two reasons. Firstly, a convex set $X$ and its
symmetrized set $\frac{1}{2}(X\oplus\breve{X})$ share a lot of common
geometrical descriptors (perimeter, eccentricity, etc.). Secondly,
there are many applications in which symmetric convex particles are
considered. In this way, the reported work is focused on the symmetric
convex sets (i.e., $X= \frac{1}{2}(X\oplus\breve{X})$). By abusing the
notation, the conditions ``nonempty and compact'' will be often omitted
in this paper. In other words, without explicit mentioning of the
contrary, a \textit{convex set} will refer to a nonempty compact convex
set.

In this paper, we show that the Feret diameter of a random symmetric
convex set can be used to define some approximations of it as random
zonotopes. The polygonal approximation of a deterministic convex set
has already been studied several times \citep
{mcclure1975polygonal,glasauer1996asymptotic,bronstein2008approximation,campi1994approximation}.
However, in most cases, the approximation is made by using the support
function, which is not available in most of the geometric stochastic
models. Random polygons have already been studied several times \citep
{dafnis2009asymptotic,barany2010variance,miles1964random}. However,
they are defined in different ways and for other objectives, and they
are not characterized from their Feret diameters. In our point of view,
a zonotope (which is a Minkowski sum of line segments) is described by
its faces (direction and length) and can be characterized by its Feret
diameter. We will show that the Feret diameter of a symmetric convex
set evaluated on a finite number of directions $N>1$ can be used to
define some approximations of it as zonotope. Such zonotope
approximations will be generalized to the random symmetric convex sets.
Therefore, a random symmetric convex set will be approximated by a
random zonotope, and such approximations will be characterized from the
Feret diameter of the random symmetric convex set. The considered
random zonotope will be uniquely determined by the lengths of its
faces, and their directions will be assumed to be known. The
approximations considered are consistent as $N\rightarrow\infty$ with
respect to the Hausdorff distance.

This work is a preliminary study in order to describe the geometrical
characteristics of a population of convex particles in the context of
image analysis. Indeed, such images of population of convex particles
can be modeled by stochastic geometric models. In such a model, the
projection of a~particle represented by a random convex set.
Consequently, this work can be used to get information on such convex
particles. In addition, when the particles are supposed to be
symmetric, they have a symmetric 2-D projection that can be fully
characterized by the Feret diameter. Such a symmetric hypothesis is
suitable in several industrial applications in chemical engineering
(gas absorption, distillation, liquid--liquid extraction, petroleum
processes, crystallization, etc.).

An area of application is the gas--liquid reactions. Indeed in a such
process, the gas bubbles can be modeled as an ellipsoid the 2-D
projections of which are ellipses (see \citep
{zhang2012method,zafari2015segmentation,buwa2002dynamics}).
The main area of application is crystals manufacturing. Indeed, many
crystals are 3-D zonohedrons, and their 2-D projections are zonotopes.
For example, the crystals of oxalate ammonium \cite
{ICSIA,ahmad2011geometric}, the crystals of calcium oxalate dihydrate
\cite{zhang2002morphological}, or the (L)-glutamic acid \cite{presl28}.
In such applications, the considered approximations coincide with the
real data.

\subsection{Outline of the paper}
The paper is organized as follows.
The first part is devoted to the case of a~deterministic symmetric
convex set $X$. Some properties of the Feret diameter are first
recalled, and then for any integer $N>1$, an approximation $X^{(N)}_0$
of $X$ as a zonotope \cite{eppstein1996zonohedra} is described. It is
shown that this approximation is consistent as $N\rightarrow\infty$
with respect to the \xch{Hausdorff}{Hausdorfs} distance \cite{schneider2013convex}.
A~more accurate zonotope approximation $\tilde{X}^{(N)}_0$ of $X$ that
is invariant up to a rotation is also defined with the consistency also
satisfied. This approximation is particularly interesting to describe
the geometrical shape of $X$.

The second part is devoted to a characterization of the random
zonotopes. First, we explore some properties of the random process
associated with the Feret diameter. Then we study the random zonotopes,
define some their classes, and discuss their descriptions by their
faces. Finally, we study the characterization of some random zonotopes
from their Feret diameters random process.

In the last part, we study a random symmetric convex set $X$. We show
that it can still be described as precisely as we want by a random
zonotope $X^{(N)}_0$ and up to a rotation by a random zonotope
$X^{(N)}_\infty$ with respect to the Hausdorff distance. We give the
properties of these approximates and show that they can be
characterized from the Feret diameter random process of $X$. In
particular, the mean and autocovariance of the Feret diameter random
process of $X$ can be used to get the mean and the variances of the
random vectors composed by the face lengths of their zonotope approximations.

\section{Description of a symmetric convex set as a zonotope from its
Feret diameter}

The aim of this section is to discuss how a convex set $X$ can be
described as a zonotope. We will show that $X$ can always be
approximated as precisely as we want by zonotopes and how such
zonotopes can be characterized from the Feret diameter of~$X$. First,
we need to recall the definition of the Feret diameter and some its properties.

\subsection{Feret diameter and the support function}

\smallskip

\begin{definition}[Support function]
Let $X\subset\mathbb{R}^2$ be a convex set. The support function of
$X$ is defined as
\begin{displaymath}
f_X: \Bigg\vert %
\begin{array}{rcl}
\mathbb{R}^2 & \longrightarrow&\mathbb{R}\\
x & \longmapsto& \sup_{s\in X}\langle x,s\rangle=\max_{s\in X}\langle
x,s\rangle,\\
\end{array} %
\end{displaymath}
where $\langle\cdot,\cdot\rangle$ denotes the Euclidean dot product.
\end{definition}
The support function allows us to fully characterize a convex set.
Indeed, any positive homogeneous convex real-valued function on $\mathbb
{R}^2$ is the support function of a convex set \cite
{schneider2013convex}. In the following, we give some important
properties of the support function. The proofs are omitted since they
can be found in the literature \citep{gardner1995geometric,schneider2013convex}.
\begin{proposition}[Properties of the support function]
\label{prop:supportfunct}
Let $X\subset\mathbb{R}^2$ be a~convex set. Its support function
satisfies the following properties:
\begin{enumerate}
\item\label{it:positivementhomogene} \textit{Positive homogeneity}:
$\forall r\geq0,\; f_X(rx)=rf_X(x)$.
\item\label{it:sousadditive} \textit{Subadditivity}: $ f_X(x+y)\leq
f_X(x)+f_X(y)$.
\item\label{it:Minkadditive} $f_{X\oplus Y}= f_X+f_Y $, where $\oplus$
denotes the Minkowski addition.
\item\label{it:Invariance} If $s$ is a vectorial similarity and $b\in
\mathbb{R}^2$, then $f_{s(X)+b}(x)=f_X(s(x))+\langle x,b\rangle$.
\item\label{it:reconstruction} Reconstruction:
\begin{equation}
X=\bigcap_{x\in\mathbb{R}^2}\bigl\lbrace y\in
\mathbb{R}^2 \big\vert\langle y,x\rangle\leq f_X(x)\bigr
\rbrace. \label{eq:reconstruction}
\end{equation}
\item\label{it:positiviteContOrig} If, in addition, $0\in X$, then
$f_X\geq0$.
\item\label{it:hausdorff} $d_H(X,Y)=\Vert f_X-f_Y\Vert_\infty$, where
$d_H$ denotes the Hausdorff distance, and $\Vert\cdot\Vert_\infty$ is
the uniform norm on the unit sphere.
\end{enumerate}
\end{proposition}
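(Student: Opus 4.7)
Properties \ref{it:positivementhomogene}--\ref{it:Minkadditive} and \ref{it:positiviteContOrig} follow almost immediately by unpacking the definition $f_X(x)=\sup_{s\in X}\langle x,s\rangle$ and exploiting bilinearity of the inner product. For \ref{it:positivementhomogene} the scalar $r\geq 0$ can be pulled out of the supremum because multiplication by a nonnegative constant preserves order. For \ref{it:sousadditive}, expanding $\langle x+y,s\rangle=\langle x,s\rangle+\langle y,s\rangle$ and using the elementary bound $\sup(\varphi+\psi)\leq\sup\varphi+\sup\psi$ gives the claim. For \ref{it:Minkadditive}, every $z\in X\oplus Y$ decomposes as $z=a+b$ with $a\in X$, $b\in Y$, and since the supremum factorises into independent suprema over $a$ and $b$, one obtains equality rather than merely an inequality. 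Property \ref{it:positiviteContOrig} is obtained by remarking that $0\in X$ makes $\langle x,0\rangle=0$ a candidate in the supremum.

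Property \ref{it:Invariance} is a change of variable combined with an adjoint computation. Writing $z\in s(X)+b$ as $z=s(t)+b$ with $t\in X$, split $\langle x,s(t)+b\rangle=\langle x,s(t)\rangle+\langle x,b\rangle$; the translational term is constant in $t$ and exits the supremum. What remains is to recognise $\langle x,s(t)\rangle$ as $\langle s^{*}(x),t\rangle$ for the adjoint $s^{*}$; under the similarity hypothesis (a scaled orthogonal linear map), $s^{*}$ is identified with $s$ in the convention adopted, and taking the supremum over $t\in X$ recovers $f_X(s(x))$.

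The substantive point is the reconstruction formula \ref{it:reconstruction}. The inclusion $X\subseteq\bigcap_x\{y:\langle y,x\rangle\leq f_X(x)\}$ is tautological from the definition of $f_X$. The reverse inclusion, however, requires genuine geometric input: if $y\notin X$, then since $X$ is closed and convex the Hahn--Banach separation theorem (in its finite-dimensional, hyperplane-separation form) supplies $x_0\in\mathbb{R}^2$ with $\langle y,x_0\rangle>\sup_{s\in X}\langle x_0,s\rangle=f_X(x_0)$, so $y$ falls outside one of the halfspaces in the intersection. This separation step is where all the geometric content of the proposition is concentrated, and it is the conceptual obstacle on which the rest rests.

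Property \ref{it:hausdorff} then follows by combining \ref{it:Minkadditive} and \ref{it:reconstruction}. Writing $B$ for the closed Euclidean unit ball, $d_H(X,Y)$ is the infimum of $\varepsilon\geq 0$ for which $X\subseteq Y\oplus\varepsilon B$ and $Y\subseteq X\oplus\varepsilon B$. Part \ref{it:Minkadditive} gives $f_{Y\oplus\varepsilon B}=f_Y+\varepsilon f_B$, and $f_B\equiv 1$ on the unit sphere. Combining this with the reconstruction formula restricted to the sphere (justified by positive homogeneity, which lets one read off the full $f_X$ from its values on the sphere) shows that $X\subseteq Y\oplus\varepsilon B$ is equivalent to $f_X\leq f_Y+\varepsilon$ on the sphere, and symmetrically for the reverse inclusion. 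Taking the infimum over admissible $\varepsilon$ yields exactly $\Vert f_X-f_Y\Vert_\infty$.
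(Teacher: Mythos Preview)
The paper does not actually prove this proposition: immediately after stating it, the authors write ``The proofs are omitted since they can be found in the literature'' and cite Gardner and Schneider. Your proof plan is the standard one found in those references and is essentially correct, so there is nothing to compare against.

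One small point worth tightening: in your treatment of item~\ref{it:Invariance} you pass from $\langle x,s(t)\rangle$ to $\langle s^{*}(x),t\rangle$ and then assert that ``$s^{*}$ is identified with $s$ in the convention adopted.'' For a general similarity $s=\lambda R$ with $R$ orthogonal, the adjoint is $s^{*}=\lambda R^{T}$, which coincides with $s$ only when $R=R^{T}$ (reflections, not rotations). The correct identity is therefore $f_{s(X)+b}(x)=f_X(s^{*}(x))+\langle x,b\rangle$, and the paper's formula as printed should really carry $s^{*}$ (or $s^{-1}$ up to the scale factor) rather than $s$. This does not affect the downstream applications in the paper, since the Feret diameter uses only $|\langle\cdot,\cdot\rangle|$-type quantities and the discrepancy washes out, but your write-up would be cleaner if you stated the adjoint version explicitly rather than appealing to an unspecified convention.
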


Items~\ref{it:positivementhomogene} and \ref{it:sousadditive} relate
the convexity of the support function, and expression \eqref
{eq:reconstruction} allows the reconstruction of a convex set from its
support function. Note that the positive homogeneity of the support
function involves that it can be completely determined on the Euclidean
unit sphere. We adopt the following representation for the support
function of $X$:
\begin{displaymath}
h_X: \Bigg\vert %
\begin{array}{rcl}
\mathbb{R} & \longrightarrow&\mathbb{R}\\
\theta& \longmapsto& h_X(\theta)=f_X(^t(-\sin(\theta),\cos(\theta
))),\\
\end{array} %
\end{displaymath}
which is a continuous and $2\pi$-periodic function.

Note that the Feret diameter of a convex set $X$, denoted $H_X$, can be
expressed by the support function as
\begin{equation}
\forall\theta\in\mathbb{R},\quad  H_X(\theta)=h_X(\theta)+h_{\breve{X}}(\theta), \label{eqdef:ferretDiam}\vadjust{\eject}
\end{equation}
where $\breve{X}$ is the usual notation for the symmetric set $-X$. It
is easy to see that the Feret diameter of $X$ coincides with the
support function of $X\oplus\breve{X}$, where $\oplus$ denotes the
Minkowski sum. Therefore, the functional $H_X$ is sufficient to fully
characterize the symmetrized body $\frac{1}{2}(X\oplus\breve{X})$.
Note that if $X$ is already symmetric, then $H_X$ fully characterizes
$X$. We recall some important properties of the Feret diameter.
\begin{proposition}[Properties of the Feret diameter]
Let $X$ be a convex set. Then its Feret diameter $H_X$ satisfies the
following properties:
\begin{enumerate}
\item\label{it:add} For tow convex sets $X$ and $Y$, $H_{X\oplus
Y}=H_{X}+H_{Y}$.
\item\label{it:prod} $\forall r\in\mathbb{R},\; H_{rX}=\vert r\vert
H_{X} $.
\item\label{it:rot} If $R_\eta$ is a rotation and $b\in\mathbb{R}^2$,
then $\forall\theta\in\mathbb{R},\; H_{R_\eta(X)+b}(\theta
)=H_{X}(\theta+\eta)$.
\item\label{it:period}\textit{$\pi$- periodicity}: $\forall\theta\in
\mathbb{R},\; H_X(\theta+\pi)= H_X(\theta) $.
\item\label{it:inclusion} For two symmetric bodies $X$ and $Y$,
$H_X\leq H_Y \Leftrightarrow X\subseteq Y$
\item\label{it:lemme} For any $\theta,\beta\in[0,2\pi]$,
\begin{equation}
H_X(\theta+\beta)\leq H_X(\theta)+ 2\bigg\vert\sin\biggl(\frac{\beta}{2}\biggr)\bigg\vert H_X\biggl(\theta+\frac{\beta+\pi}{2}\biggr). \label{Cond:convexPolygon}
\end{equation}
\end{enumerate}
\label{prop:feretprop}
\end{proposition}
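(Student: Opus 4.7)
The overall plan is to reduce every property of $H_X$ to a corresponding property of the support function via the identity $H_X(\theta)=h_X(\theta)+h_{\breve X}(\theta)$, and to exploit Proposition~\ref{prop:supportfunct} throughout. Items~\ref{it:add}--\ref{it:inclusion} are essentially bookkeeping; the only nontrivial step is item~\ref{it:lemme}, which I would derive from the sublinearity of the support function combined with a half-angle decomposition of the unit vector $u(\theta)=(-\sin\theta,\cos\theta)$.

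For item~\ref{it:add}, I would note that $(X\oplus Y)^{\vee}=\breve X\oplus\breve Y$ and apply item~\ref{it:Minkadditive} of Proposition~\ref{prop:supportfunct} to both summands. For item~\ref{it:prod}, I would split into $r\geq 0$ (direct positive homogeneity) and $r<0$ (writing $rX=|r|\breve X$ and using the symmetry $h_X+h_{\breve X}=h_{\breve X}+h_X$). For item~\ref{it:rot}, using item~\ref{it:Invariance} of Proposition~\ref{prop:supportfunct} with $R_\eta u(\theta)=u(\theta+\eta)$, the rotation turns into a shift of the angular argument, while the translation terms $\langle u(\theta),b\rangle$ contributed by $X+b$ and by $(X+b)^{\vee}=\breve X-b$ cancel in the sum defining $H_{R_\eta X+b}$. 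For item~\ref{it:period} I would compute $h_X(\theta+\pi)=\sup_{s\in X}\langle -u(\theta),s\rangle=\sup_{t\in\breve X}\langle u(\theta),t\rangle=h_{\breve X}(\theta)$, so the two halves of $H_X$ swap under a $\pi$-shift. Item~\ref{it:inclusion} then follows because for a symmetric body $H_X=2h_X$, reducing the statement to the standard equivalence $h_X\leq h_Y\iff X\subseteq Y$ obtained from item~\ref{it:reconstruction} of Proposition~\ref{prop:supportfunct}.

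The interesting step is item~\ref{it:lemme}. The key observation is the sum-to-product identity
\begin{displaymath}
u(\theta+\beta)=u(\theta)+2\sin(\beta/2)\,u\bigl(\theta+(\beta+\pi)/2\bigr),
\end{displaymath}
which I would verify by expanding $-\sin(\theta+\beta)+\sin\theta$ and $\cos(\theta+\beta)-\cos\theta$. Since $H_X$ is itself the support function of the symmetric convex set $X\oplus\breve X$, it is sublinear; applying positive homogeneity and subadditivity to the decomposition above yields the inequality directly when $\sin(\beta/2)\geq 0$. The case $\sin(\beta/2)<0$ is handled by absorbing the sign into the direction: $2\sin(\beta/2)\,u(\alpha)=2|\sin(\beta/2)|\,u(\alpha+\pi)$, and then the $\pi$-periodicity established in item~\ref{it:period} replaces $H_X(\theta+(\beta+\pi)/2+\pi)$ by $H_X(\theta+(\beta+\pi)/2)$, giving the stated bound uniformly in $\beta$.

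The main obstacle is really only in item~\ref{it:lemme}: one must guess the correct decomposition of $u(\theta+\beta)$ (a half-angle formula rather than a naive $\cos\beta,\sin\beta$ projection) so that the second support-function term lands exactly at the midpoint direction $\theta+(\beta+\pi)/2$. Once this identity is in hand, sublinearity and $\pi$-periodicity close the argument with no further difficulty.
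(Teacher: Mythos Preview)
Your proposal is correct and, for items~\ref{it:add}--\ref{it:inclusion}, essentially identical to the paper's proof, which also dispatches these by appealing to Proposition~\ref{prop:supportfunct} via $H_X=h_X+h_{\breve X}$.

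For item~\ref{it:lemme} the underlying idea is the same (a half-angle vector decomposition fed into sublinearity of a support function), but the organization differs. The paper works with $h_X$ rather than $H_X$: it sets $\alpha=\beta+\pi$, writes $u(\theta+\alpha)=(-u(\theta))+y$ with $y=u(\theta+\alpha)+u(\theta)$, applies subadditivity of $f_X$ to this sum, then separately computes $\lVert y\rVert=2\lvert\sin(\beta/2)\rvert$ and identifies the direction of $y$ as $\theta+(\beta+\pi)/2$ modulo~$\pi$; only at the end does it transfer the resulting inequality on $h_X$ to $H_X$ by applying it to the symmetrized body $\tfrac12(X\oplus\breve X)$. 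Your route is more direct: you recognize from the start that $H_X$ is itself the support function of $X\oplus\breve X$, state the explicit identity $u(\theta+\beta)=u(\theta)+2\sin(\beta/2)\,u(\theta+(\beta+\pi)/2)$, and apply sublinearity of $H_X$ in one step, with $\pi$-periodicity absorbing the sign of $\sin(\beta/2)$. This buys you a shorter argument with no auxiliary substitution and no final transfer step; the paper's version, in exchange, makes the norm and direction computations for $y$ fully explicit.
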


\begin{proof}
$\:$
\begin{enumerate}[1,~2,~3.]
\item[1,~2,~3.] According to Eq.~\eqref{eqdef:ferretDiam}, the first
three items come directly from Proposition~\autoref{prop:supportfunct}.
\item[4.] The $\pi$-periodicity follows from $h_{\breve{X}}(\theta)=
h_{X}(\theta+\pi)$, $\theta\in\mathbb{R}$.
\item[5.] Because of the symmetry of $X$ and $Y$, if $H_X\leq H_Y$,
then $ h_X\leq h_Y$. Therefore, for any $x\in\mathbb{R}^2$, $f_X(x)\leq
f_Y(x)$, so $\lbrace y\in\mathbb{R}^2 \vert\<y,x\>\leq f_X(x)\rbrace
\subseteq\lbrace y\in\mathbb{R}^2 \vert\<y,x\>\leq f_Y(x)\rbrace$,
and thus $X\subset Y$ by Proposition~\ref{prop:supportfunct}.\ref
{it:reconstruction}.

Suppose that $X\subset Y$. Then $\forall x\in\mathbb{R}^2,\; \lbrace\<
s,x\>\vert s\in X\rbrace\subset\lbrace\<s,x\>\vert s\in Y\rbrace
\Rightarrow f_X(x)\leq f_Y(x)\Rightarrow h_X\leq h_Y\Rightarrow H_X\leq H_Y$.
\item[6.] For any $(\theta,\beta)\in\mathbb{R}^2$, let $\alpha=\beta+\pi
$, $x=^t(-\sin(\theta),\cos(\theta))$, $z=^t(-\sin(\theta+\alpha),\cos
(\theta+\alpha))$ and $y=z+x$, so that
\begin{align*}
&f_X(y-x)\leq f_X(-x)+f_X(y),
\\
& h_X(\theta+\alpha)\leq h_X(\theta+
\pi)+f_X(y),
\end{align*}
and
\begin{align*}
{\parallel} y{\parallel}&=\sqrt{\bigl(\sin(\theta)+\sin(\theta+\alpha)\bigr)^2+ \bigl(\cos (\theta)+\cos(\theta+\alpha)\bigr)^2}\\
&=\sqrt{(2+2\bigl(\sin(\theta)\sin(\theta+\alpha)+ \cos(\theta)\cos(\theta + \alpha)\bigr)}\\
&=\sqrt{2}\sqrt{1+\cos(\alpha)}\\
&=\sqrt{2}\sqrt{2\cos^2\biggl(\frac{\alpha}{2}\biggr)}\\
&=2\biggl\vert\cos\biggl(\frac{\alpha}{2}\biggr)\biggr\vert\\
&=2\biggl\vert\sin\biggl(\frac{\beta}{2}\biggr)\biggr\vert.
\end{align*}
Using the formulas
\begin{align*}
\sin(\theta)+\sin(\theta+\alpha) &=2\sin\biggl(\theta+\frac{\alpha}{2}\biggr)
\cos \biggl(\frac{\alpha}{2}\biggr),
\\
\cos(\theta)+\cos(\theta+\alpha) &=2\cos\biggl(\theta+\frac{\alpha}{2}\biggr)
\cos \biggl(\frac{\alpha}{2}\biggr)
\end{align*}
and taking $\eta\in\mathbb{R}$ such that $y=\parallel y\parallel
^t(-\sin(\eta),\cos(\eta))$, we have
\begin{align*}
\sin(\eta) &=\frac{2\sin(\theta+\frac{\alpha}{2})\cos(\frac{\alpha
}{2})}{\parallel y\parallel},
\\
\cos(\eta) &=\frac{2\cos(\theta+\frac{\alpha}{2})\cos(\frac{\alpha
}{2})}{\parallel y\parallel}.
\end{align*}
Let $s$ be the sign of $\cos(\frac{\alpha}{2})$. Then $\sin(\eta) =s\sin
(\theta+\frac{\alpha}{2})$ and $\cos(\eta) =s\cos(\theta+\frac{\alpha
}{2})$.

Finally, $\eta\in\lbrace\theta+\frac{\beta+\pi}{2},\theta+\frac{\beta
+\pi}{2}+\pi\rbrace$, and it can be expressed as
\begin{equation*}
h_X(\theta+\beta-\pi)\leq h_X(\theta+\pi)+2\biggl\vert\sin
\biggl(\frac{\beta
}{2}\biggr)\biggr\vert h_X(\eta).
\end{equation*}
This result is true for any convex set $X$, in particular, for $Y=\frac
{1}{2}( X\oplus\breve{X})$. However, $h_y=H_X$, and then by the $\pi
$-periodicity of the Feret diameter we have
\begin{equation*}
\forall\theta \quad \beta\in[0,2\pi], \quad H_X(\theta+\beta)\leq
H_X(\theta)+ 2\biggl\vert\sin\biggl(\frac{\beta}{2}\biggr)\biggr\vert
H_X\biggl(\theta+\frac{\beta+\pi}{2} \biggr).\qedhere
\end{equation*}
\end{enumerate}
\end{proof}

The Feret diameter can also be related to the mixed area \cite
{schneider2013convex} by using a line segment as a structural element.
Indeed, using the Steiner formula \cite{schneider2013convex} with two
convex sets $X$ and $Y$, we have
\begin{equation*}
A(X\oplus Y)=A(X)+2W(X,Y)+A(Y),
\end{equation*}
where $W(X,Y)$ denotes the mixed area between $X$ and $Y$. The mixed
area functional $W(\cdot,\cdot)$ is a symmetric mapping, which is also
homogeneous in its two variables (see \citep
{Kminch,schneider2013convex} for details). It is often used to describe
some morphological characteristics of a convex set $X$ by using
different structuring elements. For instance, if $X$ is a bounded
convex set and $B$ is the unit disk, then $W(X,B)=\frac{1}{2}U(X)$,
where $U(X)$ denotes the perimeter of $X$. Let $X$ be a bounded convex
set, and $S_\theta$ be a unit line segment directed by $\theta$. Then
\begin{equation}
W(X,S_\theta)=\frac{1}{2}H_X(\theta). \label{eq:MixedAreatFerteseg}
\end{equation}

The proof is omitted since it consists in a simple drawing and can be
found in the literature \citep{GSI,Kminch}.

\begin{remark}
This relation is very important because it involves an interpretation
of the mixed area of a convex set with the Minkowski addition of line
segments from its Feret diameter. Indeed, for any $\theta_1,\theta_2\in
[0,\pi]$ and $\alpha_1,\alpha_2\in\mathbb{R}_+$,
\begin{align*}
A(X\oplus\alpha_1 S_{\theta_1}\oplus\alpha_2
S_{\theta_2})&=A(X\oplus \alpha_1 S_{\theta_1})+2W(X\oplus
\alpha_1 S_{\theta_1},\alpha_2 S_{\theta_2})
\\
&=A(X)+\alpha_1 H_X(\theta_1)+
\alpha_2 H_{X\oplus\alpha_1 S_{\theta
_1}}(\theta_2)
\\
&=A(X)+\alpha_1 H_X(\theta_1)+
\alpha_2 H_{X}(\theta_2)+\alpha_2
H_{\alpha_1 S_{\theta_1}}(\theta_2).
\end{align*}
However, $ \alpha_2 H_{\alpha_1S_{\theta_1}}(\theta_2)=W(\alpha_1
S_{\theta_1},S_{\theta_2})=A(\alpha_1 S_{\theta_1}\oplus\alpha_2
S_{\theta_2})$. Then,
\begin{align*}
W(X,\alpha_1 S_{\theta_1}\oplus\alpha_2
S_{\theta_2})=\frac
{1}{2}\bigl(\alpha_1 H_X(
\theta_1)+\alpha_2 H_{X}(\theta_2)
\bigr).
\end{align*}
This result can be easily generalized by induction to any Minkowski sum
of line segments: $\forall n\geq1,\forall i=1,\dots, n,\; \alpha_i\in
\mathbb{R}_+,\theta_i\in\mathbb{R} $, we have
\begin{equation}
W\Biggl(X ,\bigoplus_{i=1}^n
\alpha_i S_{\theta_i}\Biggr)=\frac{1}{2}\sum
_{i=1}^n \alpha_i H_X(
\theta_i). \label{eq:linearWtoH}
\end{equation}
Relation \eqref{eq:linearWtoH} has an important kind of linearity.
Indeed, it implies formulae for the computation of the mixed area
between a convex set and a symmetric body from their Feret diameter
\textup{(}see Remark \autoref{rq:mixedareaapprox}\textup{)}.
\end{remark}

\subsection{Approximation of a symmetric convex set by a $0$-regular zonotope}

Now we give some properties of the Feret diameter of a convex set and
its connection with the mixed area. Here the zonotope will be defined
and particularly the class of the $0$-regular zonotopes, some
properties of the zonotopes will be discussed. In particular, we will
show how a symmetric convex set can be approximated by a $0$-regular
zonotope as precisely as we want.

Let $\mathcal{C}$ denote the class of all symmetric convex sets of
$\mathbb{R}^2$, where the symmetry is given in the sense of Minkowski:
$X=\frac{1}{2}(X\oplus\breve{X})$.
Let $S_0$ be the unit line segment $[-\frac{1}{2},\frac{1}{2}]$, and
$S_t$ its rotation by the angle $t\in[0,\pi[$. Consider now the convex
set $X$ such that
\begin{equation}
X=\bigoplus_{i=1}^n \alpha_i
S_{\theta_i}, \quad n\in\mathbb{N}^*, \ \forall i=1,\ldots n, \
\alpha_i\in\mathbb{R}_+, \ \theta_i\in[0,\pi[.
\label{eq:representation}
\end{equation}
Note that $X$ is a compact convex symmetric polygon with at most $2n$
faces, where for all $i=1,\dots,n$, $\alpha_i$ is the length of the
two faces of $X$ oriented by~$\theta_i$. It is easy to see that every
compact convex symmetric polygon has an even number of faces and can be
represented as \eqref{eq:representation} up to a translation.
Furthermore, note that $X$ has a nonempty interior if and only if $n>1$.

\begin{definition}[Zonotopes]
Any compact convex symmetric polygon such as \eqref{eq:representation}
is called a \textit{zonotope}. For $N\in\mathbb{N}^*$, $\mathcal
{C}^{(N)}$ denotes the set of all zonotopes with at most $2N$ faces:
\begin{equation*}
\mathcal{C}^{(N)}=\Biggl\lbrace\bigoplus_{i=1}^N
\alpha_i S_{\theta_i}\big\vert \alpha\in\mathbb{R}_+ ^N,
\; \theta\in[0,\pi[^N\Biggr\rbrace,
\end{equation*}
where $\alpha=^t(\alpha_1,\ldots\alpha_N)$ and $\theta=^t(\theta
_1,\ldots\theta_N)$.
\end{definition}
Several geometric characteristics and properties of zonotopes can be
easily expressed from representation \eqref{eq:representation}.

\begin{proposition}[Geometrical characterization of zonotopes]
Let $N\in\mathbb{N}^*$, and $X=\bigoplus_{i=1}^N \alpha_i S_{\theta_i}
$ be an element of $\mathcal{C}^{(N)}$. Let $H_X$ be its Feret diameter
function, $U(X)$ its perimeter, and $A(X)$ its area. Then
\begin{equation}
\forall\eta\in\mathbb{R},\; H_X(\eta)=\sum
_{i=1}^N \alpha_i\bigl\vert\sin (\eta-
\theta_i)\bigr\vert, \label{eq:ferretPolygone}
\end{equation}
\begin{equation}
U(X)=2\sum_{i=1}^N \alpha_i,
\label{eq:PerimPolygone}
\end{equation}
\begin{equation}
A(X)=\frac{1}{2}\sum_{i=1}^N \sum
_{j=1}^N\alpha_i
\alpha_j\bigl\vert\sin (\theta_i-\theta_j)\bigr\vert.
\label{eq:airPolygone}
\end{equation}

\label{prop:A P H deterministe}
\end{proposition}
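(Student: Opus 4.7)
The plan is to treat the three identities in order, exploiting the additivity of each quantity under Minkowski sum together with the very simple values these quantities take on a single line segment.

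For the Feret diameter formula \eqref{eq:ferretPolygone}, I would first reduce to the case of a single segment using Proposition~\ref{prop:feretprop}: items~\ref{it:add} and~\ref{it:prod} give
\begin{equation*}
H_X(\eta) \;=\; \sum_{i=1}^N H_{\alpha_i S_{\theta_i}}(\eta) \;=\; \sum_{i=1}^N \alpha_i\, H_{S_{\theta_i}}(\eta).
\end{equation*}
It therefore suffices to compute $H_{S_{\theta_i}}(\eta)$. The Feret diameter of a unit line segment is the length of its orthogonal projection on a line of direction $\eta$, and elementary trigonometry (or item~\ref{it:rot} applied to $S_0 = [-\tfrac12,\tfrac12]$, for which $H_{S_0}(\eta)=|\sin\eta|$) gives $H_{S_{\theta_i}}(\eta)=|\sin(\eta-\theta_i)|$, yielding \eqref{eq:ferretPolygone}.

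For the perimeter formula \eqref{eq:PerimPolygone}, my preferred route is Cauchy's formula for symmetric convex bodies, which expresses the perimeter as the integral of the Feret diameter: $U(X)=\int_{0}^{\pi} H_X(\eta)\,d\eta$. Substituting \eqref{eq:ferretPolygone} and using $\int_{0}^{\pi}|\sin(\eta-\theta_i)|\,d\eta=2$ (by $\pi$-periodicity of $|\sin|$) gives $U(X)=2\sum_{i=1}^N\alpha_i$. Alternatively, since the perimeter is Minkowski-additive on planar convex sets and $U(\alpha_i S_{\theta_i})=2\alpha_i$, the same conclusion follows directly.

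For the area formula \eqref{eq:airPolygone}, I would argue by induction on $N$, using the Steiner expansion $A(Y\oplus Z)=A(Y)+2W(Y,Z)+A(Z)$ combined with relation \eqref{eq:MixedAreatFerteseg}, which reads $W(Y,S_\theta)=\tfrac12 H_Y(\theta)$ and is homogeneous in the segment. The base case $N=1$ is trivial since a segment has zero area and the right-hand side vanishes because $|\sin 0|=0$. For the inductive step, writing $X_{N+1}=X_N\oplus\alpha_{N+1}S_{\theta_{N+1}}$ and applying Steiner,
\begin{equation*}
A(X_{N+1}) \;=\; A(X_N) + \alpha_{N+1}\, H_{X_N}(\theta_{N+1}),
\end{equation*}
and then substituting the induction hypothesis for $A(X_N)$ and \eqref{eq:ferretPolygone} for $H_{X_N}(\theta_{N+1})$ reconstructs exactly the double sum with indices running up to $N+1$ (the new diagonal term vanishes, and the two new off-diagonal blocks combine with the $\tfrac12$ prefactor by symmetry of $|\sin|$).

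The computations are all routine; the only mild subtlety is making sure the ``extra'' terms produced in the induction step for the area land precisely in the symmetric double sum after accounting for the factor $\tfrac12$, but this is bookkeeping rather than a real obstacle.
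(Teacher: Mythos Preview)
Your proposal is correct and essentially follows the same route as the paper: the Feret diameter is obtained by computing $H_{S_\beta}(\eta)=|\sin(\eta-\beta)|$ directly and then using additivity and homogeneity from Proposition~\ref{prop:feretprop}, and the area is proved by exactly the same induction via Steiner's formula and \eqref{eq:MixedAreatFerteseg}. The only minor difference is the perimeter: the paper simply observes that a zonotope with face lengths $\alpha_1,\dots,\alpha_N$ (each occurring twice) has perimeter $2\sum_i\alpha_i$ by adding up the edges, whereas your preferred route goes through Cauchy's formula $U(X)=\int_0^\pi H_X(\eta)\,d\eta$; both are valid, and your alternative (Minkowski-additivity of the perimeter) is effectively the paper's argument.
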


\begin{proof}
$\:$
\begin{enumerate}
\item[(6)] For any $(\beta,\eta)\in\mathbb{R}^2$, the support function
of the line segment $S_\beta$ in the direction $\eta$ is
\begin{align*}
h_{S_\beta}(\eta)&= \max_{t\in[-\frac{1}{2},\frac{1}{2}]}\bigl\lbrace t\bigl(-\cos (\beta)\sin(\eta)+\sin(\beta)\cos(\eta)\bigr)\bigr\rbrace\\
& =\max_{t\in[-\frac{1}{2},\frac{1}{2}]}\bigl\lbrace t\sin(\beta-\eta )\bigr\rbrace\\
&= \frac{1}{2}\bigl\vert\sin(\beta-\eta)\bigr\vert\\
\Rightarrow  \quad  H_{S_\beta}(\eta)&=\bigl\vert\sin(\beta-\eta)\bigr\vert.
\end{align*}
Then relation \eqref{eq:ferretPolygone} follows from Propositions~\ref
{prop:feretprop}.\ref{it:add} and \ref{prop:feretprop}.\ref{it:prod}.
\item[(7)] If $X$ is a polygon of $2N$ faces of length $\alpha_i,
i=1,\dots, N$, the perimeter can be obtained by adding up the face lengths.
\item[(8)] For the area, the result \eqref{eq:airPolygone} is proved
by induction on $N$: for $N=1$, $X=S_{\theta_1}$ and $A(X)=0$, so that
\eqref{eq:airPolygone} is satisfied. Suppose that \eqref
{eq:airPolygone} is true for $n\leq N$ and let us show that it is true
for $N+1$. Since $ X=(\bigoplus_{i=1}^{N} \alpha_i S_{\theta_i})\oplus
\alpha_{N+1}S_{\theta_{N+1}} $, by the Steiner formula we have
\begin{align*}
A(X) &=A\Biggl(\bigoplus_{i=1}^{N}
\alpha_i S_{\theta_i}\Biggr) +2W\Biggl(\bigoplus
_{i=1}^{N} \alpha_i S_{\theta_i},
\alpha_{N+1}S_{\theta_{N+1}}\Biggr).
\end{align*}
Then, by \eqref{eq:MixedAreatFerteseg},
\begin{align*}
2W\Biggl(\bigoplus_{i=1}^{N}
\alpha_i S_{\theta_i}, \alpha_{N+1}S_{\theta
_{N+1}}
\Biggr)=\alpha_{N+1}H_{\bigoplus_{i=1}^{N} \alpha_i S_{\theta
_i}}(\theta_{N+1}),
\end{align*}
and finally, by the heredity assumption and \eqref{eq:ferretPolygone},
\begin{align*}
A(X) &=\frac{1}{2}\sum_{i=1}^N \sum_{j=1}^N\alpha_i\alpha_j\bigl\vert\sin (\theta_i-\theta_j)\bigr\vert+\alpha_{N+1}H_{\bigoplus_{i=1}^{N} \alpha_i S_{\theta_i}}(\theta_{N+1})\\
&=\frac{1}{2}\sum_{i=1}^N \sum_{j=1}^N\alpha_i\alpha_j\bigl\vert\sin(\theta _i-\theta_j)\bigr\vert+\alpha_{N+1}\sum_{i=1}^N\alpha_i\bigl\vert\sin(\theta _{N+1} -\theta_i)\bigr\vert\\
&=\frac{1}{2}\sum_{i=1}^{N+1} \sum_{j=1}^{N+1}\alpha_i\alpha_j\bigl\vert\sin (\theta_i-\theta_j)\bigr\vert,
\end{align*}
which proves \eqref{eq:airPolygone}.\qedhere
\end{enumerate}
\end{proof}

In the following, we use a regular subdivision $\theta$. We will show
that if the subdivision step is sufficiently small, then any symmetric
convex set can be approximated by a zonotope as precisely as we want.

\begin{definition}[$0$-regular zonotopes]
For $N\in\mathbb{N}^*$, let $\mathcal{C}^{(N)}_0$ denote the class of
all zonotopes with at most $2N$ faces oriented by the regular
subdivision of $[0,\pi[$ by $N$ elements:
\begin{equation*}
\mathcal{C}^{(N)}_0=\Biggl\lbrace\bigoplus
_{i=1}^N \alpha_i S_{\theta
_i}\big\vert
\alpha\in\mathbb{R}_+ ^N\Biggr\rbrace\text{\quad with }
\theta_i=\frac
{(i-1)\pi}{N}, \ i=1,\dots,N.
\end{equation*}
Such zonotopes are called $0$-regular zonotopes.
\end{definition}
Note that $\mathcal{C}^{(N)}_0\subset\mathcal{C}^{(N)}$ and $\mathcal
{C}^{(N_1)}_0\subset\mathcal{C}^{(N_2)}_0 $ if and only if $N_1$ is a
splitter of $N_2$. In addition, $\mathcal{C}^{(N)}_0$ can be identified
to $\mathbb{R}_+ ^N $ by the mapping $\alpha\rightarrow X=(\bigoplus_{i=1}^N \alpha_i S_{\theta_i}) $, which is an isomorphism between the
semigroups $(\mathbb{R}_+ ^N , +)$ and $(\mathcal{C}^{(N)}_0, \oplus)$.
That is, this mapping is a bijection, and
\begin{align*}
\forall\bigl(\alpha,\alpha'\bigr)\in\mathbb{R}_+ ^N
\times\mathbb{R}_+ ^N, \quad  \Biggl(\bigoplus
_{i=1}^N \bigl(\alpha_i+
\alpha'_i\bigr) S_{\theta_i}\Biggr)=\Biggl(\bigoplus
_{i=1}^N \alpha_i
S_{\theta_i}\Biggr)\oplus\Biggl(\bigoplus_{i=1}^N
\alpha'_i S_{\theta_i}\Biggr).
\end{align*}

\begin{theorem}[Approximation in $\mathcal{C}^{(N)}_0$]
Let $X\in\mathcal{C}$.
\begin{enumerate}
\item For all $N>1$, let $F^{(N)}$ denote the squared matrix $(\vert
\sin(\theta_i-\theta_j)\vert)_{1\leq i,j\leq N}$ and
$H^{(N)}_X=^t(H_X(\theta_1),\dots, H_X(\theta_N))$. Then
\begin{equation}
X_0^{(N)} =\bigoplus^N_{i=1}
\bigl({F^{(N)}}^{-1}H^{(N)}_X
\bigr)_i S_{\theta_i} \label{eq:prop:N-0-approxExpression}
\end{equation}
belongs to $\mathcal{C}^{(N)}_0$ and satisfies
\begin{equation}
\forall N>1, \quad d_H\bigl(X,X_0^{(N)}\bigr)
\leq(6+2\sqrt{2})\sin\biggl(\frac{\pi}{2N}\biggr)\operatorname{diam}(X), \label{eq:majorApprox}
\end{equation}
where $\operatorname{diam}(X)=\sup_{s\in\mathbb{R}}(H_X(s))$ denotes the maximal
diameter of $X$ and $d_H$ the Hausdorff distance.

Consequently, the sequence of $0$-regular zonotopes $(X_0^{(N)})_{N>1}$
approximates $X$ in the following sense:
\begin{equation}
d_H\bigl(X,X_0^{(N)}\bigr)\longrightarrow0\quad
\text{as}\; N\longrightarrow\infty. \label{eq:prop:limites}
\end{equation}
We call $X_0^{(N)}$ the $\mathcal{C}^{(N)}_0$-approximation of $X$.
\item In addition, for any $N>1$, the set $X_0^{(N)}$ is the unique
element of $\mathcal{C}^{(N)}_0$ satisfying
\begin{equation}
H_{X_0^{(N)}}(\theta_i)=H_X(\theta_i),\quad
i=1,\dots,N.
\end{equation}
\item Furthermore, $X_0^{(N)}$ contains $X$ and can be expressed as
\begin{equation}
X_0^{(N)}=\bigcap_{i=1}^N
\biggl\lbrace x\in\mathbb{R}^2,\; \bigl\vert\bigl\langle x,^t\bigl(-\sin (
\theta_i),\cos(\theta_i)\bigr)\bigr\rangle\bigr\vert\leq
\frac{1}{2}H_X(\theta_i)\biggr\rbrace.
\label{eq:prop:N-0-approxExpression2}
\end{equation}
\label{thm:approx}
\end{enumerate}
\end{theorem}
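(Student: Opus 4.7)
The three items interlock, and it is cleanest to prove them in the order (2), (3), (1), since uniqueness makes the construction in (3) transparent and (1) is then just a continuity estimate. By the Feret-diameter formula for zonotopes~\eqref{eq:ferretPolygone}, every $Y=\bigoplus_{i=1}^N\beta_i S_{\theta_i}\in\mathcal{C}_0^{(N)}$ satisfies $H_Y(\theta_k)=\sum_{i=1}^N\beta_i|\sin(\theta_k-\theta_i)|$, so that $H_Y^{(N)}=F^{(N)}\beta$. Imposing $H_Y(\theta_k)=H_X(\theta_k)$ on the grid is therefore a linear system, whose uniqueness reduces to the invertibility of $F^{(N)}$. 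Since $|\sin((i-j)\pi/N)|$ depends only on $|i-j|$ modulo $N$, $F^{(N)}$ is symmetric circulant with first row $(0,\sin(\pi/N),\ldots,\sin((N-1)\pi/N))$; a direct DFT computation (two geometric sums) yields explicit, nonzero eigenvalues. Hence $\beta={F^{(N)}}^{-1}H_X^{(N)}$ is the unique admissible choice, which settles part~(2).

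\textbf{Existence, containment, and intersection formula (part 3).} For part~(3) I would construct $X_0^{(N)}$ directly as the intersection $Y$ appearing in~\eqref{eq:prop:N-0-approxExpression2}. Since $X$ is symmetric, $h_X=\tfrac12 H_X$, and so $|\langle x,u_i\rangle|\leq H_X(\theta_i)/2$ for every $x\in X$ and every $i$; hence $X\subseteq Y$. Because $N>1$, the normals $u_i$ are not all parallel, so $Y$ is a bounded, centrally symmetric convex polygon each of whose faces is perpendicular to some $u_i$; in two dimensions every centrally symmetric convex polygon is a zonotope, so $Y\in\mathcal{C}_0^{(N)}$. The inclusion $Y\subseteq$ slab $i$ gives $H_Y(\theta_i)\leq H_X(\theta_i)$, while $X\subseteq Y$ combined with Proposition~\ref{prop:feretprop}.\ref{it:inclusion} gives the reverse inequality, so $H_Y(\theta_i)=H_X(\theta_i)$ at every grid point. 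By the uniqueness just established, $Y=X_0^{(N)}$. This simultaneously shows that the coefficients $({F^{(N)}}^{-1}H_X^{(N)})_i$ are nonnegative (so $X_0^{(N)}$ really is a zonotope of $\mathcal{C}_0^{(N)}$), that $X\subseteq X_0^{(N)}$, and formula~\eqref{eq:prop:N-0-approxExpression2}.

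\textbf{Hausdorff bound (part 1) and main obstacle.} For part~(1), Proposition~\ref{prop:supportfunct}.\ref{it:hausdorff} and the symmetry of both bodies give $d_H(X,X_0^{(N)})=\tfrac12\|H_X-H_{X_0^{(N)}}\|_\infty$. For $\eta\in[\theta_k,\theta_{k+1}]$, using the interpolation identity $H_{X_0^{(N)}}(\theta_k)=H_X(\theta_k)$, one has
\begin{equation*}
H_{X_0^{(N)}}(\eta)-H_X(\eta)=\bigl(H_{X_0^{(N)}}(\eta)-H_{X_0^{(N)}}(\theta_k)\bigr)+\bigl(H_X(\theta_k)-H_X(\eta)\bigr),
\end{equation*}
and bounding each piece via Proposition~\ref{prop:feretprop}.\ref{it:lemme} with $\beta=\eta-\theta_k\in[0,\pi/N]$ yields $|H_{X_0^{(N)}}(\eta)-H_X(\eta)|\leq 2\sin(\pi/(2N))(\operatorname{diam}(X_0^{(N)})+\operatorname{diam}(X))$. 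The only remaining ingredient, and the main obstacle, is a uniform bound of the form $\operatorname{diam}(X_0^{(N)})\leq(2+\sqrt{2})\operatorname{diam}(X)$: a naive fixed-point estimate obtained by applying Proposition~\ref{prop:feretprop}.\ref{it:lemme} to $X_0^{(N)}$ itself is nondissipative for $N\leq 3$ (since $2\sin(\pi/(2N))\geq 1$ there), so one instead observes that $X_0^{(N)}$ is contained in the intersection of the two slabs whose normals are closest to perpendicular (exactly $\pi/2$ for $N$ even, off by at most $\pi/(2N)$ for $N$ odd) and estimates the resulting parallelogram's diameter in the corresponding oblique basis. Combining this with the previous display produces the constant $6+2\sqrt{2}$, and the consistency~\eqref{eq:prop:limites} is then immediate because $\sin(\pi/(2N))\to 0$.
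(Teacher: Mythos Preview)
Your argument is correct, and parts (2) and (3) are essentially identical to the paper's: invertibility of the circulant matrix $F^{(N)}$ via its DFT eigenvalues, then identification of $X_0^{(N)}$ with the intersection of the $N$ slabs by showing this intersection is a $0$-regular zonotope with the right Feret diameters on the grid. For part (1), your first step also coincides with the paper's: apply inequality~\eqref{Cond:convexPolygon} to both $X$ and $X_0^{(N)}$ at a nearby grid point and subtract, obtaining $\lvert H_X(\eta)-H_{X_0^{(N)}}(\eta)\rvert\le 2\sin(\pi/(2N))\bigl(\operatorname{diam}(X)+\operatorname{diam}(X_0^{(N)})\bigr)$.

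The genuine divergence is in how $\operatorname{diam}(X_0^{(N)})$ is controlled. The paper does \emph{not} invoke Proposition~\ref{prop:feretprop}.\ref{it:lemme} again; instead it exploits the explicit zonotope formula $H_{X_0^{(N)}}(\theta_i+\delta)=\sum_j\alpha_j\lvert\sin(\theta_i+\delta-\theta_j)\rvert$, expands via the sine addition law, and obtains $H_{X_0^{(N)}}(\eta)\le\lvert\cos\delta\rvert\,H_X(\theta_i)+\lvert\sin\delta\rvert\,\operatorname{diam}(X_0^{(N)})$, hence $\operatorname{diam}(X_0^{(N)})(1-\sin(\pi/(2N)))\le\operatorname{diam}(X)$ and the constant $\tfrac{\sqrt2}{\sqrt2-1}=2+\sqrt2$. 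Your parallelogram-containment argument (two nearly orthogonal slabs from the intersection description) is a perfectly valid and more geometric alternative, and in fact yields the sharper bound $\operatorname{diam}(X_0^{(N)})\le\operatorname{diam}(X)/\sin(\phi/2)$ with $\phi$ the grid angle closest to $\pi/2$, hence at worst $2\operatorname{diam}(X)$ (attained at $N=3$). Two minor remarks: your factor $\tfrac12$ in $d_H=\tfrac12\lVert H_X-H_{X_0^{(N)}}\rVert_\infty$ is correct for symmetric bodies and actually improves the constant over the paper's stated $6+2\sqrt2$; and the ``nondissipative'' obstacle you flag is somewhat artificial, since using the \emph{nearest} grid point ($\lvert\delta\rvert\le\pi/(2N)$, so $2\lvert\sin(\delta/2)\rvert\le 2\sin(\pi/(4N))<1$ for all $N\ge2$) already makes the naive fixed-point estimate via~\eqref{Cond:convexPolygon} contractive.
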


\begin{proof}
$\:$
\begin{enumerate}
\item[2.] For integer $N>1$, it is easy to see that the matrix
$F^{(N)}$ is invertible since $F^{(N)}$ is a circulant matrix \cite
{CirculantMatrix} and its eigenvalues are exactly the coefficients of
the discrete Fourier transform \cite{sundararajan2001discreteFourier}
of the signal $\vert\sin(\cdot)\vert$ (these coefficients are all
strictly positive).
Let $\alpha= {F^{(N)}}^{-1}H^{(N)}_X$ be such that
\begin{equation*}
X_0^{(N)}=\bigoplus^N_{i=1}
\alpha_i S_{\theta_i}.
\end{equation*}
Let us show that $X_0^{(N)}$ is the unique element of $\mathcal
{C}^{(N)}_0$ satisfying $H_{X_0^{(N)}}(\theta_i)=H_X(\theta_i)$, $
i=1,\dots,N $.
Suppose that there exists $X'\in\mathcal{C}^{(N)}_0$ satisfying
$H_{X'}(\theta_i)=H_X(\theta_i),\; i=1,\dots,N $. Then $X'$ can be
written as $X'=\bigoplus^N_{i=1}\alpha'_i S_{\theta_i}$, and then
$H^{(N)}_{X}={F^{(N)}}\alpha' $. The invertibility of $F^{(N)}$
implies $\alpha=\alpha'$, which means that $X_0^{(N)}=X'$.

\item[1.] Let us find an upper bound for the Hausdorff distance.

For all $\eta\in\mathbb{R}$, there exists $i\in\lbrace1,\dots,N\rbrace$
such that $\eta=\theta_i+\delta$ with $\vert\delta\vert\leq\frac{\pi}{2N}$. Using inequality \eqref{Cond:convexPolygon} with $\theta
=\theta_i$ and $\beta=\delta$ for $X_0^{(N)}$, we have
\begin{align*}
& H_{X_0^{(N)}}(\eta)\leq H_{X_0^{(N)}}(\theta_i)+2\biggl\vert\sin
\biggl(\frac
{\delta}{2}\biggr)\biggr\vert H_{X_0^{(N)}}\biggl(\theta_i+
\frac{\delta+\pi}{2}\biggr).
\end{align*}
Using inequality \eqref{Cond:convexPolygon} with $\theta=\eta$ and
$\beta=-\delta$ for $X$, we have
\begin{align*}
& H_{X}(\theta_i)\leq H_{X}(\eta)+2\biggl\vert\sin
\biggl(\frac{-\delta}{2}\biggr)\biggr\vert H_{X}\biggl(\theta_i+
\frac{\delta+\pi}{2}\biggr)
\\
\Rightarrow \quad &-H_{X}(\eta)\leq -H_{X}(\theta_i)
+2\biggl\vert\sin\biggl(\frac
{\delta}{2}\biggr)\biggr\vert H_{X}\biggl(
\theta_i+\frac{\delta+\pi}{2}\biggr).
\end{align*}
Considering the equality $ H_{X_0^{(N)}}(\theta_i)= H_{X}(\theta_i) $,
from the two previous inequalities it follows that
\begin{align*}
&H_{X_0^{(N)}}(\eta)\,{-}\,H_{X}(\eta)\,{\leq}\,2\biggl\vert\sin\biggl(
\frac{\delta}{2}\biggr)\biggr\vert \biggl( H_{X}\biggl(
\theta_i\,{+}\,\frac{\delta+\pi}{2}\biggr)\,{+}\, H_{X_0^{(N)}}\biggl(
\theta_i\,{+}\,\frac
{\delta+\pi}{2}\biggr)\biggr).
\end{align*}
In the same manner, using \eqref{Cond:convexPolygon} with $\theta=\theta
_i$ and $\beta=\delta$ for $X$ and with $\theta=\eta$ and $\beta=-\delta
$ for $X_0^{(N)}$, we have
\begin{align*}
&H_{X}(\eta) \,{-}\,H_{X_0^{(N)}}(\eta)\,{\leq}\,2\biggl\vert\sin\biggl(
\frac{\delta}{2}\biggr)\biggr\vert \biggl( H_{X}\biggl(
\theta_i\,{+}\,\frac{\delta+\pi}{2}\biggr)\,{+}\, H_{X_0^{(N)}}\biggl(
\theta_i\,{+}\,\frac
{\delta\,{+}\,\pi}{2}\biggr)\biggr).
\end{align*}
Therefore, by denoting $\operatorname{diam}(X)=\sup_\theta\lbrace H_X(\theta)\rbrace$
and $\operatorname{diam}(X_0^{(N)})=\sup_\theta\lbrace H_{X_0^{(N)}}(\theta)\rbrace$
it follows that
\begin{equation}
\bigl\vert H_{X}(\eta) -H_{X_0^{(N)}}(\eta)\bigr\vert\leq2\sin\biggl(
\frac{\pi}{2N}\biggr) \bigl( \operatorname{diam}(X)+\operatorname{diam}\bigl(X_0^{(N)}
\bigr)\bigr). \label{eq:major1}
\end{equation}
Furthermore,
\begin{align*}
H_{X_0^{(N)}}(\eta) &=\sum_{j=1}^N\alpha_j\bigl\vert\sin(\theta_i +\delta -\theta_j)\bigr\vert\\
&=\sum_{j=1}^N\alpha_j\bigl\vert\sin(\theta_i -\theta_j)\cos(\delta) -\cos (\theta_i -\theta_j)\sin(\delta) \bigr\vert\\
&\leq\bigl\vert\cos(\delta)\bigr\vert\sum_{j=1}^N\alpha_j\xch{\bigl\vert\sin(\theta_i -\theta_j)\bigr\vert}{\bigl\vert\sin(\theta_i -\theta_j)} \,{+}\,\bigl\vert\sin(\delta)\bigr\vert\sum_{j=1}^N\biggl\vert\sin\biggl(\theta_i -\theta_j +\frac{\pi}{2}\biggr)\biggr\vert\\
&\leq\bigl\vert\cos(\delta)\bigr\vert H_{X_0^{(N)}}(\theta_i) + \bigl\vert\sin(\delta )\bigr\vert H_{X_0^{(N)}}\biggl(\frac{\pi}{2}\biggr)\\
&\leq\bigl\vert\cos(\delta)\bigr\vert H_{X}(\theta_i) + \bigl\vert\sin(\delta)\bigr\vert \operatorname{diam}\bigl(X_0^{(N)}\bigr)\\
&\leq\bigl\vert\cos(\delta)\bigr\vert \operatorname{diam}(X)+ \bigl\vert\sin(\delta)\bigr\vert \operatorname{diam}\bigl(X_0^{(N)}\bigr)\\
&\leq \operatorname{diam}(X)+ \sin\biggl(\frac{\pi}{2N}\biggr) \operatorname{diam}\bigl(X_0^{(N)}\bigr)\\
\Rightarrow&\quad  \operatorname{diam}\bigl(X_0^{(N)}\bigr) \biggl(1-\sin\biggl(\frac{\pi}{2N}\biggr)\biggr)\leq \operatorname{diam}(X),\\
N\geq2 \quad \Rightarrow& \quad \operatorname{diam}\bigl(X_0^{(N)}\bigr)\leq\frac{\sqrt{2}}{\sqrt{2}-1} \operatorname{diam}(X).
\end{align*}\vadjust{\eject}
Then from \eqref{eq:major1} we have
\begin{align*}
&\bigl\vert H_{X}(\eta) -H_{X_0^{(N)}}(\eta)\bigr\vert \leq2\sin\biggl(\frac{\pi}{2N}\biggr) \biggl(1+\frac{\sqrt{2}}{\sqrt{2}-1}\biggr) \operatorname{diam}(X)\\
\Rightarrow\quad & \sup_\eta\bigl\vert\bigl(H_{X}(\eta)-H_{X_0^{(N)}}(\eta)\bigr)\bigr\vert =d_H\bigl(X,X_0^{(N)}\bigr)\\
&\quad \leq(6+2\sqrt{2})\sin\biggl(\frac{\pi}{2N}\biggr)\operatorname{diam}(X).
\end{align*}
Consequently, $d_H(X,X_0^{(N)})\longrightarrow0\; \text{as}\;
N\longrightarrow\infty$.
\item[3.] Note that $Y_N=\bigcap_{i=1}^N\lbrace x\in\mathbb{R}^2,\;
\vert\<x,^t(
-\sin(\theta_i),\cos(\theta_i))\>\vert\leq\frac{1}{2}H_X(\theta
_i)\rbrace$. Then $Y_N\in\mathcal{C}^{(N)}_0$. Indeed, each set of the
intersection is the space between two lines oriented by one of the
$\theta_i$; thus, $Y_N$ is a polygon with faces directed by the $\theta
_i$, and therefore it belongs to $\mathcal{C}^{(N)}_0$. Because of the
symmetry of $X$, it is easy to see that $X=\bigcap_{s\in[0,\pi]}\lbrace
x\in\mathbb{R}^2,\; \vert\<x,^t(-\sin(s),\cos(s))\>\vert\leq\frac
{1}{2}H_X(s)\rbrace$; therefore, $ X\subset Y_N$, and consequently
$H_X\leq H_{Y_N}$. Furthermore, because of the expression of $Y_N$ for
any $i=1,\dots,N$, $H_X(\theta_i)\geq H_{Y_N}(\theta_i)$ with the
equality on $\theta_i$, and according to the foregoing,
$Y_N=X_0^{(N)}$.\qedhere
\end{enumerate}
\end{proof}

This theorem shows that a symmetric body can be always approximated by
a $0$-regular zonotope as close as we want. Note that the choice of the
sequence $X_0^{(N)}$ is not the best one. Indeed, by taking $\frac
{\operatorname{diam}(X)}{\operatorname{diam}(X_0^{(N)})} X_0^{(N)}$ there is a finer approximation
with respect to the Hausdorff distance. However, the sequence
$X_0^{(N)}$ presents some important advantages: it always contains $X$,
the approximation of a Minkowski sum is the Minkowski sum of the
approximations, and its face length vector is expressed only from a
linear combination of the Feret diameter of $X$. Furthermore, if there
exists $M>1$ such that $X\in\mathcal{C}^{(M)}_0$, then $X_0^{(M)}=X$, and $X$ is an adhesion value of the sequence $X_0^{(N)}$.

\begin{remark}[Equivalence between perimeter and maximal diameter]
Notice that $\operatorname{diam}(X)$ can be replaced by $\frac{1}{2}U(X)$ in relation
\eqref{eq:majorApprox}. In fact, for any convex set $X$, we have the relation
\begin{equation}
2 \operatorname{diam}(X)\leq U(X)\leq4 \operatorname{diam}(X). \label{eq:equivalenceDiamPer}
\end{equation}
Indeed, according to the definition of $\operatorname{diam}(X)$, there exists a line
segment $S\subseteq X$ that has the length greater than $\operatorname{diam}(X)$, and
then $U(X)\geq U(S)\geq2 \operatorname{diam}(X)$. The second inequality comes by
considering that there is a square of side $\operatorname{diam}(X)$ containing $X$.
\end{remark}

\begin{remark}[Expression of the mixed area from the Feret diameter]
\label{rq:mixedareaapprox}
An interpretation of the mixed area between a convex set and a
symmetric convex set can be given from Theorem~\ref{thm:approx}.
Indeed, let $N>1$, $Y$ be a convex set \textup{(}not necessarily
symmetric\textup{)}, $X$ be a symmetric convex set, and
$X_0^{(N)}=\bigoplus^N_{i=1}\alpha_i S_{\theta_i}$ be its $\mathcal
{C}^{(M)}_0$-approximation. Then, according to the continuity of the
area and the Minkowski addition, there is
\begin{align*}
W\bigl(Y,X_0^{(N)}\bigr)\rightarrow W(Y,X)\text{\quad as } N
\rightarrow\infty.
\end{align*}
Furthermore, according to Theorem~\ref{thm:approx}, $W(Y,X_0^{(N)})$
can be expressed as
\begin{align*}
W\bigl(Y,X_0^{(N)}\bigr)=\sum
_{i=1}^N H_Y(\theta_i)\sum
_{j=1}^N {F^{(N)}_{ij}}^{-1}
H_X(\theta_j).
\end{align*}
Then, the mixed area $ W(Y,X)$ can be computed as
\begin{align*}
W(Y,X)= \underset{N\rightarrow\infty} {\lim} \sum_{i=1}^N
\sum_{j=1}^N {F^{(N)}_{ij}}^{-1}
H_Y(\theta_i)H_X(\theta_j).
\end{align*}
Notice that a continuous version of this expression can be written in
terms of convolution. However, this is not our objective.
\end{remark}

Of course, the $\mathcal{C}^{(N)}_0$-approximation is sensitive to
rotations (see Fig.~\ref{fig:worstandbetter}). Obviously, it can be
problematic to describe the geometry of sets. Let us consider the
following example of an ellipse.

\begin{figure}[t]
\includegraphics{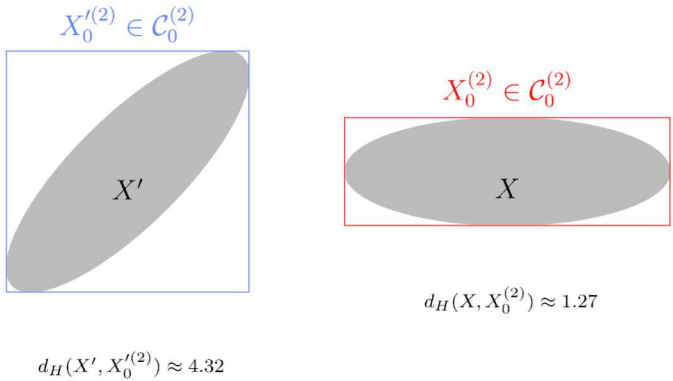}
\caption{The $\mathcal{C}^{(N)}_0$-approximations of an ellipse $X$ and
its rotation $X'$ with respect to the angle~$\frac{\pi}{4}$}
\label{fig:worstandbetter}
\end{figure}

\begin{example}
Let $X$ be an ellipse with semiaxis $a=1$ and $b=3$, and suppose that
the major semiaxis $b$ is horizontally oriented. Firstly, consider the
case $N=2$, and let us denote $X':=R_{\frac{\pi}{4}}(X)$, Fig.~\ref
{fig:worstandbetter} shows that the $\mathcal{C}^{(N)}_0$-approximation
of $X$ is better than that of $X'$ \textup{(}in terms of the Hausdorff
distance\textup{)}. Indeed, $d_H(X,X^{(2)}_0) \ll d_H(X',X'^{(2)}_0 )$.
Furthermore, the $\mathcal{C}^{(2)}_0$-approximation of the rotation is
not the rotation of the $\mathcal{C}^{(2)}_0$-approximation. Therefore,
it can be problematic to use the $\mathcal{C}^{(2)}_0$-approximation to
describe the shape of $X$. Note that for the ellipse $X$ of Fig.~\ref
{fig:worstandbetter}, the orientations $0$ and $\frac{\pi}{4}$ are
respectively the better and the worst cases for the $\mathcal
{C}^{(2)}_0$-approximation.

Let us consider now the more general case of the approximation of the
rotations of $X$ for different values of $N$. For each $N=1,\dots,20$,
the $\mathcal{C}^{(N)}_0$-approximations of all of the rotations $R_\eta
(X)$ of $X$ have been computed. Among these approximations, the better
$\eta_b$ and the worst $\eta_w$ angles \textup{(}in terms of the
Hausdorff distance\textup{)} have been retain. The corresponding
Hausdorff distances are represented in Fig.~\ref{fig:betterandworstN}.
Consequently, whatever the orientation of the ellipse, the Hausdorff
distance is inside the gray region. We can be notice that, for small
values of $N$, the difference between the worst and the better case is
more important.
\end{example}

\begin{figure}[t]
\includegraphics{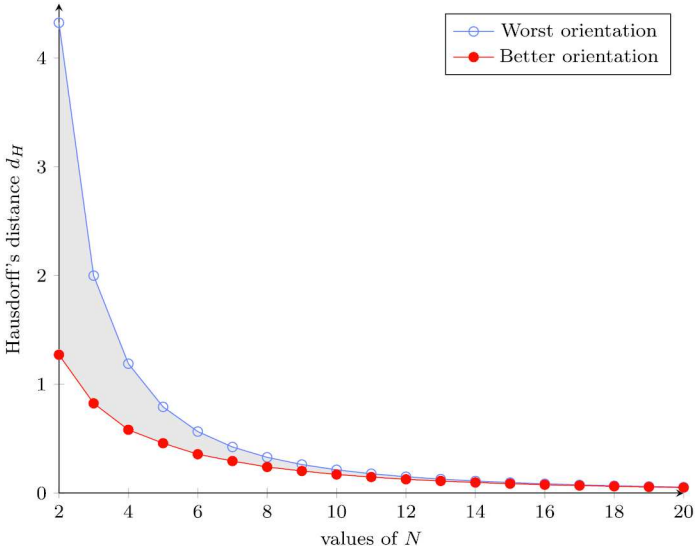}
\caption{The Hausdorff distance between an ellipse of semiaxis $(1,3)$
and its $\mathcal{C}^{(N)}_0$-approximation for several values of $N$.
The case of the better direction in red and worst in blue. The gray
region represents the possible values for this distance}
\label{fig:betterandworstN}
\end{figure}
For the reasons mentioned, it can be interesting to have an isometric
invariant approximation. Fortunately, for a symmetric convex set $X$,
the better $\mathcal{C}^{(N)}_0$-approximation (in terms of the
Hausdorff's distance) of the family of rotations of $X$ can be used to
define such an isometric invariant approximation.

\subsection{Approximation of a symmetric convex set by a regular zonotope}

We have shown previously how a symmetric convex set $X$ can be
approximated in the class of $0$-regular zonotopes. Such an
approximation is sensitive to the rotations. However, in order to study
convex sets, there is sometimes a need to have isometric invariant
tools. Therefore, we will define here an approximation that is
invariant up to a rotation. To meet this goal, there is a need to
perform the approximation on a class larger than $\mathcal{C}^{(N)}_0$,
namely the class of regular zonotopes.

\begin{definition}[$t$-regular and regular zonotopes]
Let $t\in\mathbb{R}$, $N>1$ be an integer, and let $\mathcal
{C}^{(N)}_t$ denote the class of the rotated elements of $\mathcal
{C}^{(N)}_0$ with respect to the angle~$t$:
\begin{equation*}
\mathcal{C}^{(N)}_t=\bigl\lbrace R_t(X)\bigl\vert X
\in\mathcal{C}^{(N)}_0\bigr\rbrace.
\end{equation*}
Any element of $\mathcal{C}^{(N)}_t$ is called a \textit{$t$-regular
zonotope with $2N$ faces}.

Furthermore, $\mathcal{C}^{(N)}_\infty=\bigcup_{t\in\mathbb{R}} \mathcal
{C}^{(N)}_t$ denotes the set of \textit{regular zonotopes with $2N$ faces}.
\end{definition}
All the properties of $\mathcal{C}^{(N)}_0$ cited before are also true
for $\mathcal{C}^{(N)}_t,\;t\in\mathbb{R}$. Therefore, we will define
an approximation in $\mathcal{C}^{(N)}_\infty$.

\begin{theorem}[Approximation in $\mathcal{C}^{(N)}_\infty$]
Let $X\in\mathcal{C}$, and let us denote by $X_0^{N}(t)$ the $\mathcal
{C}^{(N)}_0$-approximation of $R_{-t}(X)$.
\begin{enumerate}
\item There exists $\tau\in[0,\pi[$ satisfying
\begin{equation}
d_H\bigl(R_{\tau}\bigl(X_0^{N}(
\tau)\bigr),X\bigr)=d_H\bigl(X_0^{N}(
\tau),R_{-\tau}(X)\bigr)=\min_{t\in\mathbb{R}}d_H
\bigl(X_0^{N}(t),R_{-t}(X)\bigr). \label{eq:thm2:defN-infty}
\end{equation}
We call $X_0^{N}(\tau)$ \textup{(}also denoted $\tilde{X}_0^{N})$ the
\textit{$\mathcal{C}^{(N)}_0$-rotational approximation of~$X$}.
\item The $\mathcal{C}^{(N)}_0$-rotational approximation of $X$ is
invariant under rotations of~$X$.
\end{enumerate}
The set $ R_{\tau}(X_0^{N}(\tau))$ is called a \textit{$\mathcal
{C}^{N}_\infty$-approximation of $X$ in $\mathcal{C}_{\infty}^{(N)}$}
and is denoted by $X^{(N)}_\infty$.
\label{thm:2approxuptorot}
\end{theorem}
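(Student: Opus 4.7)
The plan is to prove part 1 by showing that $\phi(t) := d_H(X_0^{N}(t), R_{-t}(X))$ is continuous on $\mathbb{R}$ and periodic with period $\pi/N$, hence attains its minimum on the compact interval $[0, \pi/N] \subset [0, \pi[$. Part 2 will then follow by a change-of-variable argument relating $X$ and $R_\eta(X)$, combined with the uniqueness statement of Theorem~\ref{thm:approx}.

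For continuity, I would use Proposition~\ref{prop:feretprop}.\ref{it:rot} to write $H_{R_{-t}(X)}(\theta_i) = H_X(\theta_i - t)$, which is continuous in $t$ since $H_X$ is continuous. By the explicit formula~\eqref{eq:prop:N-0-approxExpression}, the face-length vector $\alpha(t) = {F^{(N)}}^{-1} H^{(N)}_{R_{-t}(X)}$ depends continuously on $t$, and since Minkowski sums are continuous for the Hausdorff metric, so does $X_0^{N}(t) = \bigoplus_i \alpha_i(t) S_{\theta_i}$. Combined with the continuity of $t \mapsto R_{-t}(X)$, this gives continuity of $\phi$. For the periodicity, the key observation is that rotation by $\pi/N$ permutes $\{\theta_i\}_{i=1}^N$ modulo $\pi$ and therefore maps $\mathcal{C}^{(N)}_0$ onto itself. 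Together with the uniqueness assertion of Theorem~\ref{thm:approx}, this yields that the $\mathcal{C}^{(N)}_0$-approximation commutes with $R_{-\pi/N}$: if $Z$ is the approximation of $Y$, then $R_{-\pi/N}(Z)$ belongs to $\mathcal{C}^{(N)}_0$ and its Feret values at each $\theta_i$ match those of $R_{-\pi/N}(Y)$, so it must be the approximation of the latter. Applied to $Y = R_{-t}(X)$, this gives $X_0^{N}(t + \pi/N) = R_{-\pi/N}(X_0^{N}(t))$, and rotation invariance of $d_H$ yields $\phi(t + \pi/N) = \phi(t)$, producing the minimizer $\tau$.

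For part 2, setting $X' = R_\eta(X)$, the same uniqueness argument gives ${X'}^{N}_0(t) = X_0^{N}(t - \eta)$, since $R_{-t}(X') = R_{-(t-\eta)}(X)$. Hence $\phi'(t) := d_H({X'}^{N}_0(t), R_{-t}(X')) = \phi(t - \eta)$, so the minimizers for $X'$ are those for $X$ translated by $\eta$; choosing $\tau + \eta$ as the minimizer for $X'$ yields $\tilde{X'}_0^{N} = {X'}^{N}_0(\tau + \eta) = X_0^{N}(\tau) = \tilde{X}_0^{N}$. The main obstacle is the potential non-uniqueness of $\tau$ within $[0, \pi[$: two minimizers differing by $k\pi/N$ give distinct sets $\tilde{X}_0^{N}$, but iterating the periodicity identity yields $R_{k\pi/N}(X_0^{N}(\tau + k\pi/N)) = X_0^{N}(\tau)$, so the companion object $X^{(N)}_\infty = R_\tau(X_0^{N}(\tau))$ is well-defined on each orbit of minimizers, and compatible choices for $X$ and $R_\eta(X)$ render the rotational approximation canonically invariant.
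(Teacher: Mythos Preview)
Your argument is correct and structurally parallel to the paper's: both establish continuity of $t\mapsto X_0^{N}(t)$ via the linear formula $\alpha(t)={F^{(N)}}^{-1}H^{(N)}_{R_{-t}(X)}$ and the continuity of the Feret diameter, then use compactness to produce the minimizer, and both handle part~2 by the change of variable ${X'}_0^{N}(t)=X_0^{N}(t-\eta)$. The one genuine difference is the periodicity you exploit. The paper only observes that $R_{-\pi}(X)=X$ (central symmetry) forces $\phi(t+\pi)=\phi(t)$ and minimizes over $[0,\pi]$; you go further and prove $\pi/N$-periodicity of $\phi$ by showing that $R_{-\pi/N}$ preserves $\mathcal{C}^{(N)}_0$ and hence, via the uniqueness clause of Theorem~\ref{thm:approx}, that $X_0^{N}(t+\pi/N)=R_{-\pi/N}(X_0^{N}(t))$. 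This is a sharper statement and buys you something the paper does not address: your observation that $R_{\tau}(X_0^{N}(\tau))$ is constant along each $\pi/N$-orbit of minimizers gives a partial well-definedness of $X^{(N)}_\infty$ that the paper leaves implicit. Neither argument fully resolves non-uniqueness when two minimizers are \emph{not} in the same $\pi/N$-orbit, but the theorem as stated only asserts existence, so this is not a gap.
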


\begin{proof}
$\:$
\begin{enumerate}
\item[1.] First of all, because of the symmetry of the $0$-regular zonotopes,
\begin{align*}
&\forall t\in\mathbb{R},\quad \mathcal{C}^{(N)}_t=\mathcal{C}^{(N)}_{t+\pi}\\
\Rightarrow\quad & \min_{t\in\mathbb{R}}d_H\big(X_0^{N}(t),R_{-t}(X)\big)=\min_{t\in[0,\pi]}d_H\big(X_0^{N}(t),R_{-t}(X)\big).
\end{align*}
For any $t\in\mathbb{R}$, let us denote by $\alpha(t)$ the face length
vector of $X_0^{N}(t)$. Then, for any $h\in\mathbb{R}$,
\begin{align*}
&\big\Vert\alpha(t)-\alpha(t+h)\big\Vert_{_1} =\big\Vert{F^{(N)}}^{-1}\bigl( H^{(N)}_{R_{-t}(X)}-H^{(N)}_{R_{-t-h}(X)}\bigr)\big\Vert _{_1}\\
\Rightarrow\quad &\big\Vert\alpha(t)-\alpha(t+h)\big\Vert_{_1} \leq\big\Vert{F^{(N)}}^{-1}\big\Vert_{_1}\big\Vert H^{(N)}_{R_{-t}(X)}-H^{(N)}_{R_{-t-h}(X)}\big\Vert_{_1}.
\end{align*}
However, $\forall\eta\in\mathbb{R},\; H_{R_{-t-h}(X)}(\eta
)=H_{R_{-t}(X)}(\eta+h)$. Because of the continuity of the Feret
diameter, $\Vert H^{(N)}_{R_{-t}(X)}-H^{(N)}_{R_{-t-h}(X)}\Vert
_{_1}\rightarrow0$ as $h\rightarrow0$, and thus $\Vert\alpha
(t)-\alpha(t+h)\Vert_{_1} \rightarrow0$ as $\rightarrow0$.

Therefore, from expression \eqref{eq:ferretPolygone} about the Feret
diameter of a zonotope, for all $\eta\in\mathbb{R}$,
\begin{align*}
\bigl\vert H_{X_0^{N}(t+h)}(\eta)- H_{X_0^{N}(t)}(\eta)\bigr\vert&= \Biggl\vert\Biggl(\sum_{i=1}^N \bigl(\alpha_i(t)-\alpha_i(t+h)\bigr)\bigl\vert\sin(\eta-\theta_i)\bigr\vert\Biggr)\Biggr\vert\\
&\leq N\underset{i=1,\dots, N} {\max}\bigl\lbrace\bigl(\alpha_i(t)-\alpha _i(t+h)\bigr)\bigr\rbrace.
\end{align*}
Therefore, $\vert H_{X_0^{N}(t+h)}(\eta)- H_{X_0^{N}(t)}(\eta)\vert
\rightarrow0$ as $h\rightarrow0$, and, finally,
$d_H(X_0^{N}(t),X_0^{N}(t+h))\rightarrow0$ as $h\rightarrow0$.
Consequently, the map $t\mapsto X_0^{N}(t)$ is continuous with respect
to the Hausdorff distance.

Note that for all $x\in\mathbb{R},\; H_ {R_t(X_0^{N}(t))}(x)=H_
{X_0^{N}(t)}(x-t)$ and $H_X(x)=H_{R_{-t}(X)}(x-t)$. Then
\begin{align*}
&H_ {R_t(X_0^{N}(t))}(x)-H_X(x)= H_
{X_0^{N}(t)}(x-t)-H_{R_{-t}(X)}(x-t)
\\
\Rightarrow\quad &  d_H\bigl(R_t\bigl(X_0^{N}(t)
\bigr),X\bigr)=d_H\bigl(X_0^{N}(t),R_{-t}(X)
\bigr)
\\
\Rightarrow\quad & \min_{t\in\mathbb{R}}d_H\bigl(R_t
\bigl(X_0^{N}(t)\bigr),X\bigr)=\min_{t\in
\mathbb{R}}d_H
\bigl(X_0^{N}(t),R_{-t}(X)\bigr).
\end{align*}
Furthermore, for any $x,h\in\mathbb{R}$,
\begin{align*}
&\bigl\vert H_ {R_t(X_0^{N}(t))}(x)- H_ {R_{t+h}(X_0^{N}(t+h))}(x)\bigr\vert \\
&\quad =\bigl\vert H_ {X_0^{N}(t)}(x-t)\\
&\qquad-\cdots-H_ {X_0^{N}(t+h)}(x-t)+ H_ {X_0^{N}(t+h)}(x-t)-H_{X_0^{N}(t+h)}(x-t-h)\bigr\vert\\
&\quad \leq\bigl\vert H_ {X_0^{N}(t)}(x-t)- H_ {X_0^{N}(t+h)}(x-t)\bigr\vert\\
&\qquad+\cdots+\bigl\vert H_ {X_0^{N}(t+h)}(x-t)- H_ {X_0^{N}(t+h)}(x-t-h)\bigr\vert.
\end{align*}
Then from the continuity of the Feret diameter and of the map $t\mapsto
X_N(t)$ there follows the continuity of $t\mapsto R_t(X_0^{N}(t))$. As
a consequence, the map $t\mapsto d_H(X_0^{N}(t),X)$ is also continuous,
and the minimum $\min_{t\in[0,\pi]}d_H(R_t(X_0^{N}(t)),X) $ is
achieved. Then there is $a\in[0,\pi]$ such that $d_H(R_\tau
(X_0^{N}(\tau)),X)=\min_{t\in\mathbb{R}}d_H(X_0^{N}(t),R_{-t}(X))$.
\item[2.] Let us prove the invariance by rotations. Let $\eta\in[0,\pi
]$ and $Y=R_\eta(X)$. Then $Y_0^{N}(t)$ is the $\mathcal
{C}_{0}^{(N)}$-approximation of $R_{-(t-\eta)}(X)$, and
$Y_0^{N}(t)=X_0^{N}(t-\eta)$. Furthermore,
\begin{align*}
\min_{t\in\mathbb{R}}d_H\bigl(Y_0^{N}(t),R_{-t}(Y)
\bigr)&=\min_{t\in\mathbb
{R}}d_H\bigl(X_0^{N}(t-
\eta),R_{-(t-\eta)}(X)\bigr)
\\
&=\min_{t\in\mathbb{R}}d_H\bigl(X_0^{N}(t),R_{-(t)}(X)
\bigr)
\\
&=d_H\bigl(X_0^{N}(\tau),R_{-\tau}(X)
\bigr).
\end{align*}
Then $X_0^{N}(\tau)$ is a $\mathcal{C}^{(N)}_0$-rotational
approximation of $Y$, and the $\mathcal{C}^{(N)}_\infty$-approximation
associated is $ R_{\eta}(R_\tau(X_0^{N}(\tau)))$ (indeed, $Y_0^{N}(\tau
+\eta)=X_0^{N}(\tau)$).\qedhere
\end{enumerate}
\end{proof}

The theorem gives important information. The $\mathcal{C}^{(N)}_\infty
$-approximation of a symmetric convex set $X$ is the best regular
zonotope with at most $2N$ faces containing $X$. It is always a better
approximation than the $\mathcal{C}^{(N)}_0$-approximation. This
approximation can be used for not so large $N$. For example, for $N=2$,
the $\mathcal{C}^{(2)}_0$-approximation of an ellipse depends on the
orientation of the ellipse, but its $\mathcal{C}^{(2)}_\infty
$-approximation is the best way to put the ellipse inside a rectangle
(see Fig.~\ref{fig:2}). An illustration of the approximations of that
ellipse for higher values of $N$ is represented Fig.~\ref
{fig:approxC0etCinf3410}.

\begin{figure}[!h]
\includegraphics{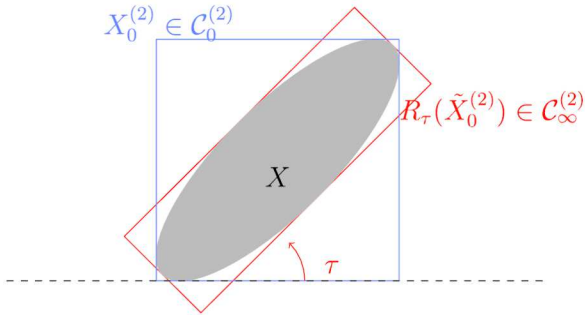}
\caption{An ellipse and its approximations: $X_2\in\mathcal{C}^{(2)}_0$
in blue and $R_\tau(\tilde{X}_2)\in\mathcal{C}^{(2)}_\infty$ in red}
\label{fig:2}
\end{figure}
\begin{figure}[h!]
\includegraphics{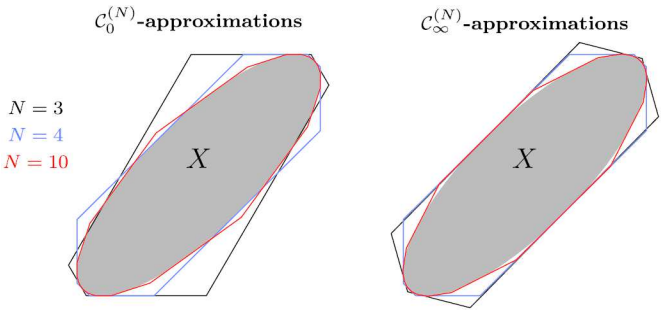}
\caption{The $\mathcal{C}^{(N)}_0$-approximations (left) and $\mathcal
{C}^{(N)}_\infty$-approximations (right) of an ellipse of semiaxis
$(3,1)$ for different values of $N(=3,4,10)$}

\label{fig:approxC0etCinf3410}
\end{figure}

The accuracy of the $\mathcal{C}^{(N)}_0$-approximation is presented in
Fig.~\ref{fig:betterandworstN}, and we remark that the best orientation
corresponds to the $\mathcal{C}^{(N)}_\infty$-approximation. Then, for
the considered ellipse, the accuracy of the $\mathcal{C}^{(N)}_\infty
$-approximation in function of the number of faces $N$ is represented
in Fig.~\ref{fig:betterandworstN}. However, the accuracy of the
$\mathcal{C}^{(N)}_\infty$-approximation depends on both shape and size
of the symmetric convex set $X$.
\begin{figure}[h!]
\includegraphics{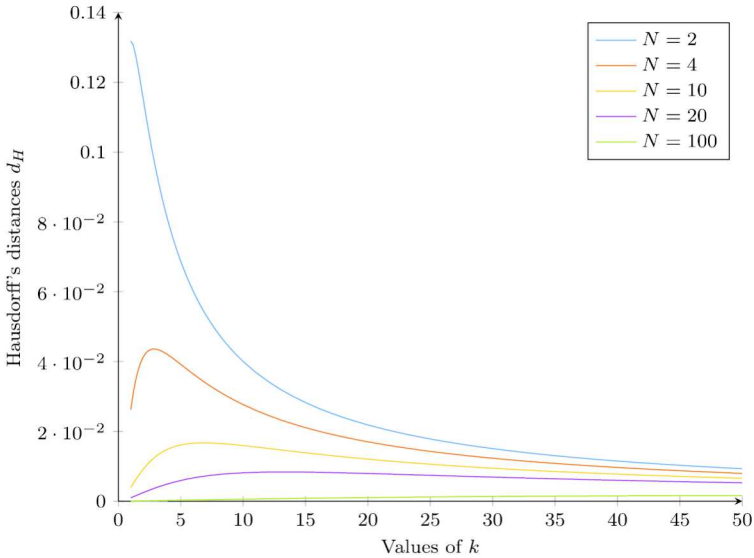}
\caption{The Hausdorff distance between an ellipse of unit perimeter
and its $\mathcal{C}^{(N)}_\infty$-approximations for several values of
$N$ in function of its axis ratio $k$}
\label{fig:accCinf}
\end{figure}

\begin{remark}[Accuracy of the $\mathcal{C}^{(N)}_\infty $-approximation]
The size dependence of the accuracy is easy to understand: the accuracy
decreases proportionally to the size factor. Indeed, for $Y:=kX,\;k\in
\mathbb{R}_+$, we have $d_H(Y_\infty^{(N)},Y)=k d_H(X_\infty^{(N)},X)$
\textup{(}because of the homogeneity of the Feret diameter\textup{)}.
In order to study the impact of the shape \textup{(}independently of
its size\textup{)} on the approximation accuracy, we need to use a
homothetic invariant descriptor. In order to do this, we normalize the
Feret diameter of a symmetric convex set $X$ by its perimeter.
According to Cauchy's formula \textup{\cite{schneider2013convex}}, the
perimeter is equal to the Feret diameter total mass $\int_0^\pi
H_X(\theta)d\theta$. Then, according to the homogeneity of the Feret
diameter, an involved distance can be defined as $\tilde
{d}_H(X,Y):=d_H(\frac{X}{U(X)},\frac{Y}{U(Y)})$ for all $X,Y \in\mathcal
{C}$. Such a distance can be used to study the approximation accuracy.
Notice that it is equivalent to work with sets of unit perimeters and
using the usual Hausdorff distance. Such a consideration will be done
in the following example.

Let us consider an ellipse $X$ with unit perimeter and axis ratio $k\in
[1,+\infty[$, the case $k=1$ referring to the disk. The accuracy of the
$\mathcal{C}^{(N)}_\infty$-approximation as a function of $N$ and $k$
is shown. More specifically, on the Fig.~\ref{fig:accCinf}, we can see
that the behavior of the curves is very different for different values
of $N$. Indeed, the worst shape for $N=2$ is the disk. However, as we
can see that this is not the case for other values of $N$. We can
notice that when the ratio $k$ increases, the importance of $N$ for the
approximation decreases. This suggests that when an object $X$ is
elongated, we can choose a small value of $N$.
\end{remark}

We have studied two different approximations of a symmetric convex set~$X$. The first one is an approximation of~$X$ as a $0$-regular
zonotope, and the second as a regular zonotope. These approximations
have been characterized from the Feret diameter of~$X$. The next
objective is to study these approximations when~$X$ becomes a random
symmetric body, and then how they can be characterized from the Feret
diameter of~$X$. In order to do this, we need to study some properties
of the random zonotopes, which lead us to the following section.

\section{The random zonotopes}

The aim of this section is to investigate how a random zonotope can be
described by a random vector representing its faces and how such a
random vector can be characterized from the Feret diameter of the
random zonotope. Firstly, we will investigate the properties of the
random process corresponding to the Feret diameter of a random set.
Secondly, we will explore the description of a random zonotope by its
faces. Finally, we will give a characterization of some random
zonotopes from their Feret diameter random process.

\subsection{Feret diameter process and isotropic random set}
Let $X$ be a random convex set, that is, a random closed set that is
almost surely a convex set. In this subsection, we state some
properties of the random process \cite{cox1977theoryRandomProcess}
corresponding to the Feret diameter of $X$.
\begin{definition}[Feret diameter random process]
Let $X$ be a random convex set of $\mathbb{R}^2$. For \(P\)-almost all
$\omega\in\varOmega, \text{ } X(\omega)$ is a convex set. Then, for any
$t\in\mathbb{R}$, the positive random variable $H_X(t):\omega\mapsto
H_{X(\omega)}(t)$ is almost surely defined. The random process $
\lbrace H_X(t),t\in\mathbb{R}\rbrace$ is called the \textit{Feret
diameter random process} of~$X$.
\end{definition}
The trajectories of $H_X$ are the Feret diameter of the realizations of
$X$. The properties in Proposition~\ref{prop:feretprop} are also true
for these trajectories, in particular, the continuity and $\pi
$-periodicity. We can also notice that the Feret diameter random
process characterizes the symmetric convex sets.

\begin{definition}[Isotropized set of a random symmetric body]
Let $X'$ be a symmetric random convex set, and let $\eta$ be a random
uniform variable on $[0,\pi]$ independent of~$X'$. Then the set
\begin{equation*}
X:=R_\eta\bigl(X'\bigr)
\end{equation*}
is isotropic (a random compact is said to be isotropic if and only if
its distribution is isometric invariant \cite{chiu2013stochastic}) and
is called an isotropized set of $X'$.
\label{def:isotroVersion}
\end{definition}
Let $X'$ be a random symmetric body, and $X$ be an isotropized set of
it. Then $X$ and $X'$ have the same shape distribution and the same
zonotope rotational approximations (see Theorem~\ref{thm:2approxuptorot}).

In the following, we will show that the Feret diameter random process
$H_{X'}$ of $X'$ can be expressed from that of $X$. We will use this
property to show that a random symmetric convex set can be described up
to a rotation by an isotropic random zonotope.

Let us recall that the Feret diameter random process $H_{X'}$ of $X'$
is sufficient to characterize $X'$. Then, for any $\theta\in\mathbb
{R}$, the Feret diameter $H_{X'}$ of $X'$ can be expressed as
\begin{equation*}
H_{X}(\theta)=H_{X'}(\theta-\eta).
\end{equation*}
Let $ B$ be a Borel subset of $\mathbb{R}$. Because of the uniformity
of $\eta$ and its independence from $X'$, it follows that
\begin{align*}
\mathbb{P}\bigl( H_{X}(\theta)\in B\bigr) &=\mathbb{P}
\bigl(H_{X'}(\theta-\eta)\in B\bigr)
\\
&=\frac{1}{2\pi}\int_0^{2\pi}\mathbb{P}
\bigl(H_{X'}(\theta-t)\in B\bigr)\,dt.
\end{align*}
Furthermore, by using the $\pi$-periodicity of the Feret diameter the
distribution of $H_{X}(\theta)$ can be expressed as
\begin{equation}
\mathbb{P}\bigl( H_{X}(\theta)\in B\bigr)=\frac{1}{\pi}\int
_0^{\pi}\mathbb {P}\bigl(H_{X'}(\theta-t)
\in B\bigr)\,dt. \label{eq:distrHisotropised}
\end{equation}
Consequently, the moments of the Feret diameter process of the set $X'$
and the isotropized set $X$ are related. Of course, we need to ensure
their existence, but we will treat this later.

\begin{proposition}[Moments of the Feret diameter process of
the isotropized set]
Let $X'$ be a random convex set, and $X$ the isotropized set of $X'$.
Suppose that the first- and second-order moments of the Feret diameter
random process $H_{X'}$ of $X'$ exist.Then those of $X$ exist and can
be expressed as follows:
\begin{align*}
&\forall\theta\in[0,2\pi],\quad  \mathbb{E}\bigl[H_{X}(\theta) \bigr]=
\frac{1}{\pi
}\int_0^{\pi}\mathbb{E}
\bigl[H_{X'}(\theta) \bigr]\,d\theta,
\\
&\forall(s,t)\in[0,2\pi]^2,\quad  \mathbb{E}\bigl[H_{X}(s)H_{X}(t)
\bigr]=\frac
{1}{\pi}\int_0^{\pi}\mathbb{E}
\bigl[H_{X'}(\theta)H_{X'}(\theta+s-t) \bigr]\,d\theta.
\end{align*}
\label{prop:isotroVersionMomentsFeret}
\end{proposition}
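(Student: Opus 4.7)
The plan is to bootstrap the identity $H_X(\theta)=H_{X'}(\theta-\eta)$, which comes directly from Proposition~\ref{prop:feretprop}.\ref{it:rot} (rotational covariance of the Feret diameter) applied $\omega$-by-$\omega$ to $X(\omega)=R_{\eta(\omega)}(X'(\omega))$, combined with the independence of $\eta$ and $X'$ and the $\pi$-periodicity from Proposition~\ref{prop:feretprop}.\ref{it:period}. This is exactly the ingredient already used in the derivation of \eqref{eq:distrHisotropised}; the proposition just replaces the indicator $\mathbf{1}_B$ by the identity and by the product $H_X(s)H_X(t)$.

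For the first moment, I would condition on $\eta$ (or apply Tonelli to the non-negative random variable $H_{X'}(\theta-\eta)$):
\begin{equation*}
\mathbb{E}\bigl[H_X(\theta)\bigr]=\mathbb{E}\bigl[H_{X'}(\theta-\eta)\bigr]=\frac{1}{\pi}\int_0^{\pi}\mathbb{E}\bigl[H_{X'}(\theta-t)\bigr]\,dt,
\end{equation*}
after reducing the interval of integration from $[0,2\pi]$ to $[0,\pi]$ via the $\pi$-periodicity of $H_{X'}$ exactly as in \eqref{eq:distrHisotropised}. A change of variable $s=\theta-t$ together with the periodicity then folds the integral back to $\int_0^\pi\mathbb{E}[H_{X'}(s)]\,ds/\pi$, yielding the announced expression (and showing in passing that the answer does not depend on $\theta$, consistent with the isotropy of $X$). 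Existence is automatic: the integrand is non-negative, the hypothesis gives $\mathbb{E}[H_{X'}(s)]<\infty$ for every $s$, so Tonelli ensures finiteness.

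For the second moment I would apply the same scheme to the product $H_X(s)H_X(t)=H_{X'}(s-\eta)H_{X'}(t-\eta)$. Taking expectations and conditioning on $\eta$ gives
\begin{equation*}
\mathbb{E}\bigl[H_X(s)H_X(t)\bigr]=\frac{1}{\pi}\int_0^{\pi}\mathbb{E}\bigl[H_{X'}(s-u)H_{X'}(t-u)\bigr]\,du,
\end{equation*}
and the substitution $\theta=s-u$ (so that $t-u=\theta+(t-s)$) together with $\pi$-periodicity brings the integral into the stated form. The only point worth checking is that the integrand in the Tonelli step is indeed integrable: for this I would invoke the hypothesis that the second-order moments of the process $H_{X'}$ exist, i.e. $\mathbb{E}[H_{X'}(s-u)H_{X'}(t-u)]<\infty$ for every $u$, which, together with the joint measurability of $(u,\omega)\mapsto H_{X'(\omega)}(s-u)H_{X'(\omega)}(t-u)$ granted by the a.s. continuity of the trajectories, is enough to justify Fubini.

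The main (and rather mild) obstacle is the measurability/integrability verification needed to swap $\mathbb{E}$ with the integral over~$u$. Once one notes that trajectories of $H_{X'}$ are a.s.\ continuous and $\pi$-periodic (so the joint process is jointly measurable), Tonelli takes care of the first-moment case and Fubini of the second, and the two displayed formulas drop out from a single change of variable.
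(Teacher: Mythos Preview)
Your proposal is correct and follows exactly the paper's approach: the paper's own proof is a two-line argument that simply recalls $H_X(\theta)=H_{X'}(\theta-\eta)$ together with the independence of $\eta$ from $X'$, and then says ``the result follows by integrating with respect to the uniform distribution of $\eta$''. Your write-up is more careful (explicit Tonelli/Fubini justification, the change of variable, and the $\pi$-periodicity reduction), but the underlying mechanism is identical.
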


\begin{proof}
Let $X'$ be a random convex set, and $X=R_\eta(X')$ an isotropized set
of it. Suppose that the first- and second-order moments of $H_{X'}$
exist. Recall that $\;H_{X}(\theta)= H_{X'}(\theta-\eta)$ for all
$\theta\in\mathbb{R}$ and that $\eta$ is independent of $X'$, and thus
the result follows by integrating with respect to the uniform
distribution of $\eta$.
\end{proof}

\begin{proposition}[Feret diameter process of an isotropic random convex set]
Let $X'$ be a random convex set.
\begin{enumerate}
\item If $X'$ is isotropic, then the random variables $H_{X'}(\theta
),\; \theta\in[0,\pi]$, are identically distributed \textup{(}i.e.,
the random process $H_{X'}$ is stationary\textup{)}.
\item Furthermore, if $X'$ is symmetric, then the converse is true.
\end{enumerate}
\label{prop:isotropFeret}
\end{proposition}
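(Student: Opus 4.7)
The plan is to exploit the rotation--translation correspondence of Proposition~\ref{prop:feretprop}.\ref{it:rot}, namely $H_{R_\eta(X')}(\theta)=H_{X'}(\theta+\eta)$, which converts rotations of the random set into translations of its Feret diameter random process. The two parts then reduce, respectively, to pushing forward and pulling back equality in law through the map $X\mapsto H_X$.

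For part 1, I would suppose $X'$ is isotropic, so that $R_\eta(X')\stackrel{d}{=}X'$ for every $\eta\in\mathbb{R}$. The mapping $X\mapsto H_X$ is continuous with respect to the Hausdorff distance on $\mathcal{C}$, as follows from Proposition~\ref{prop:supportfunct}.\ref{it:hausdorff} via \eqref{eqdef:ferretDiam}, and therefore measurable. Pushing the equality in law through this map gives $H_{R_\eta(X')}\stackrel{d}{=}H_{X'}$ as random elements of $C([0,\pi])$; Proposition~\ref{prop:feretprop}.\ref{it:rot} then identifies the left-hand side with the translated process $H_{X'}(\cdot+\eta)$. Hence every translate of $H_{X'}$ has the same joint law as $H_{X'}$ itself, i.e., the process is strictly stationary, and in particular all marginals $H_{X'}(\theta)$ share a common distribution.

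For part 2, I would run the same chain in reverse under the symmetry hypothesis. Stationarity of $H_{X'}$, combined with Proposition~\ref{prop:feretprop}.\ref{it:rot}, gives $H_{R_\eta(X')}\stackrel{d}{=}H_{X'}$ for every $\eta$. Since $X'$ is symmetric, so is $R_\eta(X')$, and on $\mathcal{C}$ the Feret diameter pathwise determines the set: Proposition~\ref{prop:feretprop}.\ref{it:inclusion} shows $X\mapsto H_X$ is injective on $\mathcal{C}$, and an explicit pathwise inverse is furnished by the symmetric reconstruction
\begin{equation*}
X=\bigcap_{\theta\in[0,\pi]}\bigl\{\, x\in\mathbb{R}^2 : \bigl|\bigl\langle x,\,{}^t(-\sin\theta,\cos\theta)\bigr\rangle\bigr|\le \tfrac{1}{2}H_X(\theta)\,\bigr\},
\end{equation*}
which is continuous in $H_X$ under uniform convergence. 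Pulling the equality in law back through this measurable inverse yields $R_\eta(X')\stackrel{d}{=}X'$ for every $\eta$, i.e., $X'$ is isotropic.

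The only genuinely nontrivial step is the transfer in part 2 of equality in law from the process $H_{X'}$ back to the random closed set $X'$. It rests on the bi-measurability of $X\mapsto H_X$ on $\mathcal{C}$: Lipschitz in the forward direction by Proposition~\ref{prop:supportfunct}.\ref{it:hausdorff}, and continuous in the inverse direction by the intersection formula above. I read the parenthetical ``i.e.'' in the statement as asserting strict-sense stationarity of the full Feret diameter process, which is precisely what the argument uses; equality of one-dimensional marginals alone would not be strong enough to invert the correspondence and recover isotropy.
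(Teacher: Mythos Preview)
Your proof is correct and the overall strategy---using the rotation--translation identity $H_{R_\eta(X')}(\theta)=H_{X'}(\theta+\eta)$ together with the bijectivity of $X\mapsto H_X$ on $\mathcal{C}$---matches the paper's. The execution differs slightly for part~1: the paper introduces an auxiliary isotropized set $X=R_\eta(X')$ with $\eta$ uniform and independent, then invokes the integral identity~\eqref{eq:distrHisotropised} to conclude that the one-dimensional marginals $\mathbb{P}(H_{X'}(\theta)\in B)$ coincide for all $\theta$; you instead push the equality in law $R_\eta(X')\stackrel{d}{=}X'$ directly through the continuous map $X\mapsto H_X$ and obtain strict stationarity of the full process in one step. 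Your route is cleaner and, as you correctly note in your final paragraph, it delivers exactly the strict stationarity that part~2 requires, whereas the paper's part~1 argument literally only yields equality of one-dimensional marginals and then tacitly upgrades this to stationarity via the parenthetical ``i.e.'' in the statement. For part~2 the two arguments are essentially identical: both use the injectivity of the Feret diameter on symmetric bodies to transfer $H_{R_\eta(X')}\stackrel{d}{=}H_{X'}$ back to $R_\eta(X')\stackrel{d}{=}X'$. Your explicit attention to bi-measurability (forward Lipschitz, inverse via the intersection formula) is a point the paper leaves implicit.
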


\begin{proof}
$\:$
\begin{enumerate}
\item[1.] Let $\eta$ be a uniform random variable on $[0,\pi]$
independent of $X'$ and let note $X=R_\eta(X')$. If $X'$ is isotropic,
then $X$ and $X'$ have the same distribution, so that $H_X$ and
$H_{X'}$ also have the same distribution. Consequently, according to
\eqref{eq:distrHisotropised}, for any $\theta\in[0,\pi]$ and any Borel
set~$B$,
\begin{align*}
\mathbb{P}\bigl( H_{X'}(\theta)\in B\bigr)=\mathbb{P}\bigl(
H_{X}(\theta)\in B\bigr)=\frac
{1}{\pi}\int_0^{\pi}
\mathbb{P}\bigl(H_{X'}(\theta-t)\in B\bigr)\,dt.
\end{align*}
Because of the $\pi$-periodicity of the Feret diameter, the integral is
independent of $\theta$, and thus the random variables $H_{X'}(\theta
),\; \theta\in[0,\pi]$, are identically distributed.
\item[2.] Suppose that $X'$ is symmetric and $H_{X'}(\theta),\; \theta
\in[0,\pi]$, are identically distributed. Then the random process $
H_{X'}$ is stationary, that is, for any $x\in\mathbb{R}$, the random
process $(H_{X'}(\theta))_ {\theta\in\mathbb{R}}$ and the translated
process $(\tilde{H}_{X'}(\theta)=H_{X'}(\theta+x))_ {\theta\in\mathbb
{R}}$ have the same distribution. However, $\tilde{H}_{X'}$ is exactly
the random process corresponding to the Feret diameter of $R_x(X')$. It
has been already established that the Feret diameter characterizes the
symmetric bodies. Therefore, for any $x\in\mathbb{R},R_x(X')\text{ and } X'$
have the same distribution, so that $X'$ is isotropic.\qedhere
\end{enumerate}
\end{proof}

We have shown some properties of the Feret diameter random process. Let
us discuss now the random zonotopes, that is, the random sets almost
surely valued in $\mathcal{C}^{(N)}$.

\subsection{Description of the random zonotopes from their faces}

Here we will define some classes of random zonotopes, in particular,
the class of the random zonotopes almost surely valued in $ \mathcal
{C}^{(N)}_0$ and the class of those almost surely valued in $ \mathcal
{C}^{(N)}_\infty$. We will study several properties of the random
zonotopes. In particular, we will show how a random zonotope can be
described by a random vector corresponding to its faces.
\begin{definition}[Random zonotopes]
For an integer $N>1$, a random closed set $X$ that has realizations
almost surely in $\mathcal{C}^{(N)}$ is called a \textit{random
zonotope with at most $2N$ faces} or, in a more concise way, a \textit
{random zonotope} when there is no possible confusion.
\end{definition}
Such a random set can be described almost surely as
\begin{align*}
\forall\omega\in\varOmega\text{ a.s.},\quad X(\omega)=\bigoplus
_{i=1}^{N}\alpha_i(\omega)S_{\beta_i(\omega)}.
\end{align*}
The distribution of the random vector $(\alpha,\beta)$ characterizes
$X$. The random vector $\alpha$ is called a \textit{face length vector}
of $X$.

According to Proposition~\ref{prop:A P H deterministe}, for any face
length vector $\alpha$ of $X$, some geometrical characteristics (Feret
diameter, perimeter, area) of $X$ can be expressed as:
\begin{align}
\forall\omega\in\varOmega\text{ a.s.},\ \forall t\in\mathbb{R},\quad  H_{X}(t)&=\sum_{i=1}^N \alpha_i\bigl\vert\sin(t-\beta_i )\bigr\vert; \label{eq:FeretRZ}\\
\forall\omega\in\varOmega\text{ a.s.},\quad U(X)&=2\sum_{i=1}^N\alpha_i; \label{eq:PerimeterRZ}\\
\forall\omega\in\varOmega\text{ a.s.},\quad A(X)&=\frac{1}{2}\sum_{i=1}^N \sum_{j=1}^N\alpha_i\alpha_j\bigl\vert\sin(\beta_i-\beta_j)\bigr\vert. \label{eq:AreaRZ}
\end{align}

\begin{proposition}[Existence conditions for the autocovariance
of the Feret diameter process]
Let $X$ be a random zonotope with $2N$ faces, and $\alpha$ its face
length vector. Then the following properties are equivalent:
\begin{equation}
\mathbb{E}\bigl[U(X)^2\bigr]<\infty;
\end{equation}
\begin{equation}
\alpha\in L^2\bigl(\mathbb{R}_+^N\bigr).
\end{equation}
Furthermore, if one of these conditions is satisfied, then $\mathbb
{E}[A(X)]<\infty$, and $\mathbb{E}[H_{X}(s)H_{X}(t) ]<\infty$ for all
$(s,t)\in[0,\pi]^2$.
\label{prop:CondExistMoments}
\end{proposition}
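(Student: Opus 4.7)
The plan is to reduce everything to the representation of the perimeter given in \eqref{eq:PerimeterRZ}, namely $U(X)=2\sum_{i=1}^N\alpha_i$ almost surely, and then exploit the fact that the area and the Feret diameter of any zonotope are controlled by the perimeter because $|\sin|\leq 1$ and the $\alpha_i$ are nonnegative.

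First I would prove the equivalence. Since $\alpha_i\geq 0$, one has the two elementary inequalities
\begin{equation*}
\sum_{i=1}^N \alpha_i^2 \leq \Biggl(\sum_{i=1}^N \alpha_i\Biggr)^2 \leq N\sum_{i=1}^N \alpha_i^2,
\end{equation*}
the left one from expanding the square and dropping cross terms, the right one from Cauchy--Schwarz. Multiplying by $4$ and taking expectation, the right inequality shows that $\alpha_i\in L^2$ for every $i$ implies $\mathbb{E}[U(X)^2]<\infty$, while the left one shows that $\mathbb{E}[U(X)^2]<\infty$ forces $\mathbb{E}[\alpha_i^2]<\infty$ for each $i$, hence $\alpha\in L^2(\mathbb{R}_+^N)$. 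This gives the equivalence.

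Next, for the two finiteness statements, I would bound pathwise. From \eqref{eq:AreaRZ} and $|\sin(\beta_i-\beta_j)|\leq 1$,
\begin{equation*}
A(X)\leq \frac{1}{2}\Biggl(\sum_{i=1}^N \alpha_i\Biggr)^{\!2}=\frac{1}{8}U(X)^2\quad\text{a.s.},
\end{equation*}
so $\mathbb{E}[A(X)]\leq \tfrac{1}{8}\mathbb{E}[U(X)^2]<\infty$. Likewise, from \eqref{eq:FeretRZ} and $|\sin|\leq 1$, for every $t\in\mathbb{R}$, $H_X(t)\leq \sum_i\alpha_i=U(X)/2$ a.s., so for any $(s,t)$,
\begin{equation*}
H_X(s)H_X(t)\leq \tfrac{1}{4}U(X)^2\quad\text{a.s.},
\end{equation*}
and taking expectation yields $\mathbb{E}[H_X(s)H_X(t)]\leq \tfrac{1}{4}\mathbb{E}[U(X)^2]<\infty$.

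There is no real obstacle here: the whole content is that once the perimeter has a second moment, every other quadratic quantity associated to the zonotope is bounded pointwise by a constant multiple of $U(X)^2$, so its integrability is automatic. The only care needed is to keep the bounds almost-sure (the representation of $X$ as a Minkowski sum holds almost surely, not identically) before passing to expectations.
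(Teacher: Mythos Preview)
Your argument is correct and follows essentially the same route as the paper: the equivalence comes from $U(X)=2\sum_i\alpha_i$ with $\alpha_i\ge0$, and the finiteness of $\mathbb{E}[H_X(s)H_X(t)]$ comes from the pointwise bound $H_X(s)H_X(t)\le\tfrac14U(X)^2$. The only difference is for $\mathbb{E}[A(X)]$: you bound the area directly from \eqref{eq:AreaRZ} via $|\sin|\le1$, whereas the paper simply invokes the isoperimetric inequality $4\pi A(X)\le U(X)^2$; both are immediate and yield the same conclusion.
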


\begin{proof}
According to \eqref{eq:PerimeterRZ}, $U(X)^2=(2\sum_{i=1}^N \alpha
_i)^2$, and the first equivalence is trivial (because of the positivity
of $\alpha$).

Proposition~\ref{prop:A P H deterministe} also shows that, for all $
(s,t)\in[0,\pi]^2,\;$
\begin{align*}
H_{X}(s) H_{X}\bigl(t'\bigr)&=\sum
_{i=1}^N\sum_{j=1}^N
\alpha_i\alpha_j \bigl\vert\sin (s-\eta-\beta_i )
\sin(t-\eta-\beta_i )\bigr\vert
\\
&\leq\sum_{i=1}^N\sum
_{j=1}^N \alpha_i\alpha_j
\\
&\leq\frac{1}{4}U(X)^2.
\end{align*}
Then the expectation $\mathbb{E}[H_{X}(s)H_{X}(t) ]$ exists, and the
existence of $\mathbb{E}[A(X)]$ follows from the isoperimetric inequality.
\end{proof}

\begin{definition}[$0$-regular random zonotopes]
For an integer $N>1$, a~random closed set $X$ that has its realizations
almost surely in $\mathcal{C}^{(N)}_0$ is called a~\textit{$0$-regular
random zonotope with at most $2N$ faces} or, in a more concise way, a
\textit{$0$-regular random zonotope} when there is no possible confusion.
\end{definition}
A $0$-regular random zonotope $X$ can be almost surely expressed as
\begin{align*}
\forall\omega\in\varOmega\text{ a.s.},\quad X(\omega)=\bigoplus
_{i=1}^{N}\alpha_i(\omega)S_{\theta_i},
\end{align*}
where $\theta_i,\; i=1,\dots,N$, denotes the regular subdivision on
$[0,\pi]$.

The distribution of the face length vector $\alpha$ characterizes the
distribution of~$X$. In addition, this relation is bijective; in other
word, the distribution of~$\alpha$ is uniquely defined and is called
the face length distribution.

Of course, the $0$-regular random zonotopes can be used to approximate
the random symmetric convex sets as $N\rightarrow\infty$ (see Section~\ref{sec:approx0reg}). However, it is not the best way to model a random
symmetric convex set. Indeed, notice that a~$0$-regular random zonotope
cannot be isotropic. For instance, we need to use a large $N$ in order
to describe a random set built as an isotropic random square; see
Example~\ref{ex:isotropicsquare}. This is the reason for using a larger
class of random zonotopes.

\begin{definition}[Regular random zonotopes]
For an integer $N>1$, any random compact set taking its values almost
surely in $\mathcal{C}^{(N)}_\infty$ is called a \textit{regular random
zonotope} and can be expressed as
\begin{align*}
X=R_x\Biggl(\bigoplus_{i=1}^N
\alpha_i S_{\theta_i}\Biggr),
\end{align*}
where $x$ is a random variable on $[0,\pi]$, and $\alpha$ is a random
vector taking values in $\mathbb{R}_+^N$. The random vector $\alpha$ is
called a \textit{random face length vector of $X$.}
\end{definition}
\begin{proposition}[Isotropic regular random zonotope]
Let $ X=R_x(\bigoplus_{i=1}^N\alpha_i S_{\theta_i})$ be an isotropic
regular random zonotope. Then $X$ has the same distribution of the
following random set:
\begin{equation}
X\stackrel{a.s.} {=}R_\eta\Biggl(\bigoplus
_{i=1}^N\alpha_i S_{\theta_i}
\Biggr), \label{eq:representationIRRZ}
\end{equation}
where $\eta$ is a uniform random variable on $[0,\pi]$ independent of
$\alpha$.
\end{proposition}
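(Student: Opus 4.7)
The plan is to produce an isotropic version of $X$ by rotating with an independent uniform angle, and then to repackage the combined rotation so that the rotational part and the face-length vector become independent. Throughout, write $Z := \bigoplus_{i=1}^N \alpha_i S_{\theta_i}$, so that $X = R_x(Z)$ almost surely.

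First I would introduce a random variable $\eta$ uniform on $[0,\pi]$ and independent of the pair $(x,\alpha)$ (hence of $X$), and set $\tilde{X} := R_\eta(X)$. Using the definition of isotropy, $R_t(X) \stackrel{d}{=} X$ for every fixed rotation $R_t$; conditioning on $\eta$ (which is independent of $X$) and integrating then yields $\tilde{X} \stackrel{d}{=} X$. Composing the two rotations gives $\tilde{X} = R_{\eta+x}(Z)$ almost surely. I would next invoke the $\pi$-periodicity of the rotation action on zonotopes: since $S_\theta$ is a centred segment we have $R_\pi(S_\theta) = S_\theta$, whence $R_\pi(Z) = Z$ and therefore $R_{\eta+x}(Z) = R_{(\eta+x) \bmod \pi}(Z)$.

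It then remains to show that $\xi := (\eta+x) \bmod \pi$ is uniform on $[0,\pi]$ and independent of $\alpha$. Since $\eta$ is independent of $(x,\alpha)$ and uniform on $[0,\pi]$, conditionally on $(x,\alpha)$ the translated variable $\eta+x \bmod \pi$ is still uniform on $[0,\pi]$; integrating out $x$ leaves, conditionally on $\alpha$, the same uniform law, which is precisely independence of $\xi$ from $\alpha$ with uniform marginal. Chaining the equalities in distribution, $X \stackrel{d}{=} \tilde{X} = R_\xi(Z) \stackrel{d}{=} R_{\eta'}\bigl(\bigoplus_{i=1}^N \alpha_i S_{\theta_i}\bigr)$ with $\eta'$ uniform on $[0,\pi]$ independent of $\alpha$, which is the claimed representation. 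The only substantive point in this argument is the $\pi$-periodicity $R_\pi(Z) = Z$, which is what legitimises reducing the $\mathbb{R}$-valued sum $\eta+x$ modulo $\pi$ and thereby turns the combined angle into a uniform variable on $[0,\pi]$; everything else is a routine conditioning computation.
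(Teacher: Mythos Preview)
Your proof is correct and follows essentially the same route as the paper: introduce an auxiliary uniform angle independent of $(x,\alpha)$, use isotropy to identify $R_\eta(X)$ with $X$ in law, and then show that the combined angle $(\eta+x)\bmod\pi$ is uniform on $[0,\pi]$ and independent of $\alpha$. Your presentation is in fact slightly cleaner than the paper's, since you state explicitly that $\eta$ must be independent of the full pair $(x,\alpha)$ and you justify the reduction modulo~$\pi$ via $R_\pi(Z)=Z$ rather than leaving it implicit.
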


\begin{proof}
Let $ X=R_x(\bigoplus_{i=1}^N\alpha_i S_{\theta_i})$ be an isotropic
regular random zonotope, and $\eta'$ be a uniform random variable
independent of $\alpha$. Because of the isotropy of $X$, the random set
$R_{\eta'}(X)$ has the same distribution as $X$. Let $\eta=x+\eta'[\pi]
$. Then the random set $R_{\eta'}(X)$ can be expressed as $R_{\eta
}(\bigoplus_{i=1}^N\alpha_i S_{\theta_i})$. Consequently, $R_{\eta
}(\bigoplus_{i=1}^N\alpha_i S_{\theta_i})$ has the same distribution as $X$.

Let us show that $\eta$ is a uniform variable independent of $\alpha$.

Let $B$ be a Borel set of $\mathbb{R}^N$, and let $E=\lbrace\eta\in
[0,t]\rbrace\cap\lbrace\alpha\in B\rbrace$ for any $t\in[0,\pi]$. Then
\begin{align*}
E &= \lbrace\alpha\in B\rbrace\cap\biggl(\bigcup_{z\in[0,\pi]}
\lbrace x=z\rbrace\cap\bigl\lbrace\eta'+z[\pi]\leq t\bigr\rbrace
\biggr)
\\
&=\bigcup_{z\in[0,\pi]} \lbrace\alpha\in B\rbrace \lbrace
x=z\rbrace\cap \bigl\lbrace\eta'+z[\pi]\leq t\bigr\rbrace.
\end{align*}
Note that this union is disjointed. Then because of the independence of
$\eta'$,
\begin{align*}
\mathbb{P}(E)=\int_0^\pi\mathbb{P}\bigl(
\lbrace\alpha\in B\rbrace \lbrace x=z\rbrace\bigr)\mathbb{P}\bigl( \bigl\lbrace
\eta'+z[\pi]\leq t\bigr\rbrace\bigr)\,dz.
\end{align*}
The quantity $\mathbb{P}( \lbrace\eta'+z[\pi]\leq t\rbrace)$ is
independent of the value of $z$ and can be easily computed as $\mathbb
{P}( \lbrace\eta'+z[\pi]\leq t\rbrace)=\frac{t}{\pi}$. Consequently:
\begin{align*}
\mathbb{P}(E)&=\frac{t}{\pi}\int_0^\pi
\mathbb{P}\bigl( \lbrace\alpha\in B\rbrace \lbrace x=z\rbrace\bigr)\mathbb{P}
\bigl( \bigl\lbrace\eta'+z[\pi]\leq t\bigr\rbrace \bigr)\,dz,
\\
\mathbb{P}(E)&=\frac{t}{\pi}\mathbb{P}\bigl( \lbrace\alpha\in B\rbrace
\bigr).
\end{align*}
Then $\eta$ is a uniform random variable on $[0,\pi]$ independent of
$\alpha$.
\end{proof}

This proposition shows that an isotropic regular random zonotope can
always be described as in \eqref{eq:representationIRRZ}. Such a
zonotope is consequently defined by its random face length vector
$\alpha$. However, different distributions of $\alpha$ can lead to the
same distribution of $X$, as mentioned in the following proposition.

\begin{proposition}[Family of the random face length vectors]
Let $\alpha$ be a random face length vector of the isotropic regular
random zonotope $X$. The following family of random face length
vectors, denoted $\mathcal{F}_{N}(X)$, provides the same distribution
of the random set $X$:
\begin{equation}
\mathcal{F}_{N}(X)=\bigl\lbrace\alpha'\overset{a.s}
{=}J^n\alpha\big\vert\forall \omega\in\varOmega\text{ a.s.}, n(\omega)\in
\lbrace0,\dots,N-1\rbrace \bigr\rbrace,
\end{equation}
where $J$ is the circulant matrix $J=Circ(0,1,0,\dots,0)$.
\end{proposition}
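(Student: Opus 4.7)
The plan is to reduce the statement to showing that replacing $\alpha$ by $J^n\alpha$ in the canonical representation amounts to rotating the underlying $0$-regular zonotope $\bigoplus_{i=1}^N \alpha_i S_{\theta_i}$ by angle $-n\pi/N$, and that this extra rotation can be absorbed into the uniform random rotation $\eta$ without changing the distribution.

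First I would establish the key algebraic identity
\[
\bigoplus_{i=1}^N (J^n\alpha)_i\, S_{\theta_i} \;=\; R_{-n\pi/N}\bigg(\bigoplus_{i=1}^N \alpha_i\, S_{\theta_i}\bigg)\quad\text{a.s.}
\]
The circulant form of $J$ gives $(J^n\alpha)_i=\alpha_{i+n \bmod N}$. After the change of summation index $k=i+n \bmod N$, each coefficient $\alpha_k$ appears attached to $S_{\theta_{k-n \bmod N}}$ rather than to $S_{\theta_k}$. Because $S_{\theta+\pi}=S_{\theta}$, any wrap-around in the cyclic index (which shifts the nominal angle by $\pi$) leaves the segment unchanged, so $S_{\theta_{k-n \bmod N}}=S_{\theta_k-n\pi/N}=R_{-n\pi/N}(S_{\theta_k})$. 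Pulling this common rotation out of the Minkowski sum yields the identity.

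Second, to verify that $\alpha':=J^n\alpha$ is a face length vector of an isotropic regular random zonotope with the same distribution as $X$, I would introduce a uniform variable $\eta''$ on $[0,\pi]$ chosen independent of the pair $(\alpha,n)$ and apply the above identity to write
\[
R_{\eta''}\bigg(\bigoplus_{i=1}^N \alpha'_i\, S_{\theta_i}\bigg) \;=\; R_{\eta''-n\pi/N}\bigg(\bigoplus_{i=1}^N \alpha_i\, S_{\theta_i}\bigg).
\]
Setting $\tilde{\eta}=\eta''-n\pi/N\bmod\pi$, I would argue that $\tilde{\eta}$ is uniform on $[0,\pi]$ and independent of $\alpha$: conditionally on $(\alpha,n)$, $\tilde{\eta}$ is a deterministic shift mod $\pi$ of the uniform $\eta''$ and thus remains uniform on $[0,\pi]$ with a conditional law not depending on $(\alpha,n)$; hence $\tilde{\eta}$ is unconditionally uniform and independent of $\alpha$. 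Comparing with $X=R_\eta(\bigoplus_i\alpha_i S_{\theta_i})$ gives the claimed equality in distribution.

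The main obstacle is the careful bookkeeping in the first step: the crucial point is that the wrap-around in the cyclic index $\bmod N$ corresponds, via $\theta_i=(i-1)\pi/N$, to a shift of exactly $\pi$ in the angle, so that the $\pi$-periodicity of $S$ globally interprets the permutation as a true rotation. A secondary subtlety is that $n$ is allowed to be a random variable possibly depending on $\alpha$, so the auxiliary uniform $\eta''$ in the second step must be taken independent of \emph{both} $\alpha$ and $n$, in order for the absorbed rotation $\tilde{\eta}$ to inherit the independence from $\alpha'$ required by the definition of an isotropic regular random zonotope.
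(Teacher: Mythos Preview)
Your argument correctly establishes one inclusion: every $\alpha'=J^n\alpha$ is again a face length vector of $X$. The algebraic identity and the absorption of the extra rotation into a fresh independent uniform variable are fine, and this is essentially how the paper handles that direction as well (the paper simply sets $\eta'=\theta_{n+1}+\eta\,[\pi]$ rather than introducing a new $\eta''$, but the mechanism is the same).

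However, you have missed the other inclusion, which is the real content of the proposition as the paper proves it. The paper shows that $\mathcal{F}_N(X)$ is \emph{exactly} the collection of face length vectors of $X$: given any two face length vectors $\alpha,\alpha'$ of $X$ (with associated independent uniforms $\eta,\eta'$), one must have $\alpha'=J^n\alpha$ almost surely for some random $n$ valued in $\{0,\dots,N-1\}$. The paper obtains this by equating the two a.s.\ representations $\bigoplus_i\alpha_i S_{\theta_i+\eta}=\bigoplus_i\alpha'_i S_{\theta_i+\eta'}$, rotating by $-\eta'$, and then invoking the uniqueness of the face length vector in $\mathcal{C}_0^{(N)}$ to force $\theta_1=(\theta_{j}+\eta-\eta')[\pi]$ for some random index $j$, hence $\alpha'_i=\alpha_{i+j-1[N]}$, i.e.\ $\alpha'=J^{j-1}\alpha$. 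Your proposal does not address this direction at all; without it you have only shown that the family $\{J^n\alpha\}$ is contained in the set of face length vectors, not that it exhausts them.
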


\begin{proof}
First of all, it is easy to see that $\mathcal{F}_{N}(X)$ is not empty
by construction of $X$.
Let $\alpha,\alpha'$ be two representative random vectors of $X$. Then
there exist two uniform random variables $\eta$ and $\eta'$ satisfying
$\eta\independent\alpha$ and $\eta'\independent\alpha'$ such that:
\begin{align*}
&\forall\omega\in\varOmega\text{ a.s.},\quad \bigoplus_{i=1}^N
\alpha_i(\omega ) S_{\theta_i+\eta(\omega)}=\bigoplus
_{i=1}^M \alpha'_i(\omega)
S_{\theta_i+\eta'(\omega)}
\\
\Rightarrow&\quad \forall\omega\in\varOmega\text{ a.s.},\quad  R_{-\eta'(\omega
)}\Biggl(
\bigoplus_{i=1}^N \alpha_i(
\omega) S_{\theta_i+\eta(\omega
)}\Biggr)=R_{-\eta'(\omega)}\Biggl(\bigoplus
_{i=1}^N \alpha'_i(\omega)
S_{\theta
_i+\eta'(\omega)}\Biggr)
\\
\Rightarrow&\quad \forall\omega\in\varOmega\text{ a.s.},\quad \bigoplus
_{i=1}^N \alpha'_i(\omega)
S_{\theta_i}=\bigoplus_{i=1}^N
\alpha_i(\omega) S_{\theta_i+\eta(\omega)-\eta'(\omega)}.
\end{align*}
Then, because of the uniqueness of the face length vector in $\mathcal
{C}_0^{(N)}$, for any $\omega\in\varOmega\text{ a.s., }$ there is $j(\omega
)\in\lbrace1,\dots,N\rbrace$ such that
\begin{align*}
& \theta_1=\bigl(\theta_{j(\omega)}+ \eta(\omega)-
\eta'(\omega)\bigr)[\pi] \text{\quad and\quad } \alpha'_1(
\omega)=\alpha_j(\omega)
\\
\Rightarrow\quad & \theta_{j(\omega)}=\bigl(\eta'(\omega)-\eta(
\omega)\bigr)[\pi]\text{\quad and\quad } \alpha'_1(\omega)=
\alpha_j(\omega)
\\
\Rightarrow\quad &  \alpha'_i(\omega)=\alpha_{i+j-1[M]}(
\omega)
\\
\Rightarrow\quad & \alpha'(\omega)=J^{j(\omega)-1}\alpha(\omega).
\end{align*}
By taking $\forall\omega\in\varOmega\text{ a.s.},\;n(\omega)=j(\omega
)-1[N]$ it follows that $\alpha'=J^n \alpha$ and, consequently,
$\mathcal{F}_{N}(X)\subset\lbrace\alpha'=J^n\alpha\vert\forall\omega\in
\varOmega\text{ a.s.},\ n(\omega)\in\lbrace0,\dots,N-1\rbrace\rbrace$.

The other inclusion can be proved by taking $\eta'$ such that $\forall
\omega\in\varOmega\text{ a.s.}, \eta'(\omega)=\beta_{n(\omega)+1}+\eta[\pi
]$. For such $\eta'$, it follows that $\forall\omega\in\varOmega\text{
a.s.},\;X(\omega)=\bigoplus_{i=1}^N \alpha'_i(\omega) S_{\theta_i+\eta
'(\omega)}$.
\end{proof}

\begin{definition}[Central random face length vector]
Let $\alpha\in\mathcal{F}_{N}(X)$, and let $n$ be a uniform random
variable on $\lbrace0,\dots,M-1\rbrace$ independent of $\alpha$. Then
the random face length vector $\alpha'=J^n\alpha$ is called a \textit
{central random face length vector of $X$}.
\end{definition}
Notice that a central random face length vector has all components
identically distributed. Furthermore, its distribution has many
interesting properties.
\begin{proposition}[Uniqueness of the central face length distribution]
There is a unique distribution for any central random face length
vectors. In other words, let $\tilde{\alpha}',\alpha'$ be two central
random face length vectors of $X$. Then they have same distribution.
Such a distribution will be named \textit{the central face length
distribution of~$X$}.\looseness=1
\label{prop:uniqCentralDisrt}
\end{proposition}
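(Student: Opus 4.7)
The plan is to reduce two arbitrary central random face length vectors to a common reference vector $\alpha \in \mathcal{F}_N(X)$ via the previous proposition, and then exploit the fact that averaging over a uniform shift $n$ on $\mathbb{Z}/N\mathbb{Z}$ absorbs any further cyclic permutation.

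First I would take two central random face length vectors $\alpha'$ and $\tilde{\alpha}'$ of $X$, written as $\alpha'\stackrel{a.s.}{=}J^n\alpha$ and $\tilde{\alpha}'\stackrel{a.s.}{=}J^{\tilde{n}}\tilde{\alpha}$, where $\alpha,\tilde{\alpha}\in\mathcal{F}_N(X)$ and $n,\tilde{n}$ are uniform on $\{0,\dots,N-1\}$, independent of $\alpha$ and $\tilde{\alpha}$ respectively. By the preceding proposition characterizing $\mathcal{F}_N(X)$, there exists a (possibly $\omega$-dependent) random variable $m\in\{0,\dots,N-1\}$ such that $\tilde{\alpha}\stackrel{a.s.}{=}J^m\alpha$, and consequently $\tilde{\alpha}'\stackrel{a.s.}{=}J^{\tilde{n}+m\bmod N}\alpha$.

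Next, for an arbitrary Borel set $B\subset\mathbb{R}_+^N$, I would compute both laws by conditioning. Since $n$ is independent of $\alpha$,
\begin{equation*}
\mathbb{P}\bigl(\alpha'\in B\bigr)=\mathbb{E}\bigl[\mathbb{P}\bigl(J^n\alpha\in B\mid\alpha\bigr)\bigr]=\mathbb{E}\Biggl[\frac{1}{N}\sum_{k=0}^{N-1}\mathbb{1}_{J^k\alpha\in B}\Biggr],
\end{equation*}
and since $\tilde{n}$ is independent of $\tilde{\alpha}$,
\begin{equation*}
\mathbb{P}\bigl(\tilde{\alpha}'\in B\bigr)=\mathbb{E}\Biggl[\frac{1}{N}\sum_{k=0}^{N-1}\mathbb{1}_{J^k\tilde{\alpha}\in B}\Biggr]=\mathbb{E}\Biggl[\frac{1}{N}\sum_{k=0}^{N-1}\mathbb{1}_{J^{k+m\bmod N}\alpha\in B}\Biggr].
\end{equation*}

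The decisive step is the observation that for each fixed $\omega$, the map $k\mapsto k+m(\omega)\bmod N$ is a bijection of $\{0,\dots,N-1\}$, so
\begin{equation*}
\sum_{k=0}^{N-1}\mathbb{1}_{J^{k+m(\omega)\bmod N}\alpha(\omega)\in B}=\sum_{j=0}^{N-1}\mathbb{1}_{J^{j}\alpha(\omega)\in B}.
\end{equation*}
Taking expectations, both probabilities coincide, so $\alpha'$ and $\tilde{\alpha}'$ share the same distribution, which proves uniqueness. The main subtlety to watch for is in the conditioning: $\tilde{n}$ is only assumed independent of $\tilde{\alpha}$, not of the pair $(\alpha,m)$, but the tower property applied with respect to $\tilde{\alpha}$ is all that is needed for the averaging identity, and the bijection argument then neutralizes the possibly unknown joint law of $(\alpha,m)$.
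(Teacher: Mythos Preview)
Your argument is correct and rests on the same underlying idea as the paper's proof: once you average over a uniform cyclic shift independent of the underlying face length vector, any additional (possibly random, possibly dependent) shift is absorbed. The execution differs, however. The paper reduces $\tilde{\alpha}'$ directly to $J^{n'}\alpha'$ with $n'=\tilde{n}+n\bmod N$, and then spends most of the proof establishing that $n'$ is again uniform on $\{0,\dots,N-1\}$ and \emph{independent} of the central vector $\alpha'$; only after that does it invoke the invariance $\mathbb{P}(J^k\alpha'\in B)=\mathbb{P}(\alpha'\in B)$ to conclude. Your route is more economical: by writing both probabilities as $\mathbb{E}\bigl[\tfrac{1}{N}\sum_{k}\mathbb{1}_{J^k(\cdot)\in B}\bigr]$ and using the pointwise bijection $k\mapsto k+m(\omega)\bmod N$ inside the expectation, you bypass the independence verification entirely. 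Your closing remark about the subtlety is exactly right: the conditioning on $\tilde{\alpha}$ (not on $(\alpha,m)$) suffices because the identity $\sum_k\mathbb{1}_{J^{k+m}\alpha\in B}=\sum_j\mathbb{1}_{J^j\alpha\in B}$ holds $\omega$-by-$\omega$, so no assumption on the joint law of $(\tilde{n},\alpha,m)$ is needed.
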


\begin{proof}
Let $\tilde{\alpha}'$ and $\alpha'$ be two central representations of
$X$. Then there exist a random face length vector $\tilde{\alpha}$ and
an independent uniform variable $\tilde{n}$ on $\lbrace0,\dots
,N-1\rbrace$ such that $\tilde{\alpha}'=J^{\tilde{n}}\tilde{\alpha}$.
In addition, $\tilde{\alpha}\in\mathcal{F}_{N}(X)$, so there exists $n$
such that $\tilde{\alpha}=J^{n}\alpha'$. Consequently, $\tilde{\alpha
}'=J^{\tilde{n}+n}\alpha'$. Let $n'=\tilde{n}+n[N]$. It is easy to see
that $ J^{\tilde{n}+n}=J^{n'}$, and thus
\begin{align*}
&\tilde{\alpha}'=J^{n'}\alpha'.
\end{align*}
Let us prove that $n'$ is a uniform variable on $\lbrace0,\dots
,M-1\rbrace$ independent of~$\alpha'$.

For any $k\in\lbrace0,\dots,N-1\rbrace$,
\begin{align*}
\mathbb{P}\bigl(\bigl\lbrace n'=k\bigr\rbrace\bigr)&=\mathbb{P}
\Biggl(\bigcup_{i=0}^{N-1}\bigl\lbrace
\tilde{n}=k-i[N]\bigr\rbrace\cap\lbrace n=i\rbrace\Biggr)
\\
&=\sum_{i=0}^{N-1}\mathbb{P}\bigl(\bigl
\lbrace\tilde{n}=k-i[N]\bigr\rbrace\bigr)\mathbb {P}\bigl(\lbrace n=i\rbrace\bigr)
\\
&=\frac{1}{N}.
\end{align*}
Then $n'$ is a uniform variable on $\lbrace0,\dots,N-1\rbrace$.
Furthermore, for any Borel set $B$ and any $k\in\lbrace0,\dots
,N-1\rbrace$,
\begin{align*}
\mathbb{P}\bigl(\bigl\lbrace n'=k\bigr\rbrace\cap\bigl\lbrace
\alpha'\in B\bigr\rbrace\bigr)&=\mathbb {P}\Biggl(\bigcup
_{i=0}^{N-1}\bigl\lbrace\tilde{n}=k-i[N]\bigr\rbrace
\cap\lbrace n=i\rbrace\cap\bigl\lbrace\alpha'\in B\bigr\rbrace
\Biggr)
\\
&=\sum_{i=0}^{N-1}\mathbb{P}\bigl(\bigl
\lbrace\tilde{n}=k-i[N]\bigr\rbrace\cap\lbrace n=i\rbrace\cap\bigl\lbrace
\alpha'\in B\bigr\rbrace\bigr)
\\
&=\sum_{i=0}^{N-1}\mathbb{P}\bigl(\bigl
\lbrace\tilde{n}=k-i[N]\bigr\rbrace\bigr)\mathbb {P}\bigl(\lbrace n=i\rbrace\cap
\bigl\lbrace\alpha'\in B\bigr\rbrace\bigr)
\\
&=\frac{1}{N}\sum_{i=0}^{N-1}
\mathbb{P}\bigl(\lbrace n=i\rbrace\cap\bigl\lbrace \alpha'\in B\bigr
\rbrace\bigr)
\\
&=\frac{1}{N}\mathbb{P}\bigl(\bigl\lbrace\alpha'\in B\bigr
\rbrace\bigr)
\\
&=\mathbb{P}\bigl(\bigl\lbrace n'=k\bigr\rbrace\bigr)\mathbb{P}
\bigl( \bigl\lbrace\alpha'\in B\bigr\rbrace\bigr).
\end{align*}
Now let us prove that $\alpha'$ and $\tilde{\alpha}'$ have the same
distribution. Let $B=B_{0}\times\cdots\times B_{N-1}$ be a product of
Borel sets of $\mathbb{R}$. Firstly, note that $\mathbb{P}(J^k\alpha
'\in B)= \mathbb{P}(\alpha'\in B)$ for all $k\in\lbrace0,\dots
,N-1\rbrace$. Indeed, by definition, $\alpha'$ can be written as $
\alpha'=J^n\alpha$ with $\alpha$ a representative of $X$ and $n$ an
independent uniform random variable on $\lbrace0,\dots,N-1\rbrace$. Therefore,
\begin{align*}
\mathbb{P}\bigl(\bigl\lbrace\alpha'\in B\bigr\rbrace\bigr)&=
\mathbb{P}\Biggl(\bigcup_{i=0}^{N-1}\bigl
\lbrace J^i\alpha\in B\bigr\rbrace\cap\lbrace n=i\rbrace\Biggr)
\\
&=\frac{1}{N}\sum_{i=0}^{N-1}
\mathbb{P}\bigl(\bigl\lbrace J^i\alpha\in B\bigr\rbrace\bigr)
\\
&=\frac{1}{N}\sum_{i=0}^{N-1}
\mathbb{P}\bigl(\lbrace\alpha\in B_i\times \cdots B_0
\cdots B_{N-1-i}\rbrace\bigr).
\end{align*}
In the same manner,
\begin{align*}
\mathbb{P}\bigl(\bigl\lbrace J^k\alpha'\in B\bigr
\rbrace\bigr)&=\frac{1}{N}\sum_{i=0}^{N-1}
\mathbb{P}\bigl(\bigl\lbrace J^{i+k}\alpha\in B\bigr\rbrace\bigr)
\\
&=\frac{1}{N}\sum_{i=0}^{N-1}
\mathbb{P}\bigl(\lbrace\alpha\in B_{i+k}\times \cdots B_0
\cdots B_{N-1-i-k}\rbrace\bigr)
\\
&=\frac{1}{N}\sum_{i=0}^{N-1}
\mathbb{P}\bigl(\lbrace\alpha\in B_i\times \cdots B_0
\cdots B_{N-1-i}\rbrace\bigr)
\\
&=\mathbb{P}\bigl(\bigl\lbrace\alpha'\in B\bigr\rbrace\bigr).
\end{align*}
Furthermore,
\begin{align*}
\mathbb{P}\bigl(\bigl\lbrace\tilde{\alpha'}\in B\bigr\rbrace
\bigr)&=\mathbb{P}\Biggl(\bigcup_{k=0}^{N-1}
\bigl\lbrace J^k\alpha'\in B\bigr\rbrace\cap\bigl\lbrace
n'=k\bigr\rbrace\Biggr)
\\
&=\frac{1}{N}\sum_{k=0}^{N-1}
\mathbb{P}\bigl(\bigl\lbrace J^k\alpha'\in B\bigr\rbrace
\bigr)
\\
&=\mathbb{P}\bigl(\bigl\lbrace\alpha'\in B\bigr\rbrace\bigr).
\end{align*}
Finally, $\tilde{\alpha}'$ and $\alpha'$ have the same distribution.
\end{proof}

\begin{proposition}[Properties of the central face length distribution]
Let $\alpha$ be a central random face length vector of $X$. Then the
first- and second-order moments of its distribution have the following
properties:
\begin{enumerate}
\item First-order moment:
\begin{equation}
\forall i=1,\dots,N,\quad \mathbb{E}[\alpha_i]=\frac{U(X)}{2N};
\end{equation}
\item Second-order moment:

The matrix $C[\alpha]=(\mathbb{E}[\alpha_i\alpha_j])_{1\leq i,j\leq N}$
is a circulant matrix defined by the first column $V[\alpha]=^t(\mathbb
{E}[\alpha_1\alpha_1],\dots,\mathbb{E}[\alpha_1\alpha_N])$: $C[\alpha]
=Circ(V[\alpha])$. Furthermore, this matrix is symmetric and depends
only on $(\lfloor\frac{N}{2}\rfloor+1)$ values, where $\lfloor\frac
{N}{2}\rfloor$ denotes the floor of $\frac{N}{2}$. Note that $
m=\lfloor\frac{N}{2}\rfloor$ and $v=^t(\mathbb{E}[\alpha_1\alpha
_1],\dots,\mathbb{E}[\alpha_1\alpha_{m+1 }])$; therefore, if $N$ is an
even integer, then $V=^t(v_0,\dots,v_{m-1},v_{m},v_{m-1},\dots,v_1)$,
and if $N$ is an odd integer, then $V=^t(v_0,\dots,v_m,v_{m},\dots,v_1)$.
\end{enumerate}
\label{prop:propCentralDistr}
\end{proposition}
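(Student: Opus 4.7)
The plan is to leverage two key facts about central random face length vectors: first, that their components are identically distributed (already noted in the remark immediately preceding the statement); second, that the distribution of such an $\alpha$ is invariant under every cyclic shift $J^k$, which was established along the way in the proof of Proposition~\ref{prop:uniqCentralDisrt}. The whole proof reduces to transferring these distributional symmetries to the first two moments.

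For item~1, I would combine the identical distribution of the components with the almost-sure relation \eqref{eq:PerimeterRZ}, namely $U(X) = 2\sum_{i=1}^N \alpha_i$. Taking expectations on both sides and using $\mathbb{E}[\alpha_1] = \cdots = \mathbb{E}[\alpha_N]$ yields $\mathbb{E}[U(X)] = 2N\,\mathbb{E}[\alpha_1]$, and dividing by $2N$ produces the announced equality for every index~$i$ (reading the stated $U(X)/(2N)$ as $\mathbb{E}[U(X)]/(2N)$).

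For item~2, the key observation is that $(J\alpha)_i = \alpha_{i+1\bmod N}$, so the distributional invariance $J^k\alpha \stackrel{d}{=} \alpha$ gives, for every $k$,
\[
\mathbb{E}[\alpha_i \alpha_j] \;=\; \mathbb{E}\bigl[\alpha_{(i+k)\bmod N}\,\alpha_{(j+k)\bmod N}\bigr].
\]
Hence $\mathbb{E}[\alpha_i\alpha_j]$ depends only on $(j-i)\bmod N$, which is the very definition of $C[\alpha]$ being circulant; symmetry $C[\alpha]_{ij}=C[\alpha]_{ji}$ is immediate from $\alpha_i\alpha_j=\alpha_j\alpha_i$. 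Writing $v_k = C[\alpha]_{1,1+k}$, the circulant plus symmetric structure forces $v_k = v_{N-k}$ for $k=1,\dots,N-1$. An elementary case split on the parity of $N$ with $m=\lfloor N/2\rfloor$ then recovers the two closed-form expressions for $V[\alpha]$ announced in the statement, and the number of free entries is precisely $m+1$.

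I do not anticipate any genuine obstacle: the proposition is essentially bookkeeping layered on top of the invariance result from Proposition~\ref{prop:uniqCentralDisrt}, together with the identity \eqref{eq:PerimeterRZ}. The only technical point to take care of is that the second-order moments must be finite before one manipulates them, but this is already ensured by Proposition~\ref{prop:CondExistMoments} under the tacit integrability hypothesis $\mathbb{E}[U(X)^2]<\infty$.
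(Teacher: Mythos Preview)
Your proposal is correct and follows essentially the same route as the paper: for item~1 you combine the identically distributed components with \eqref{eq:PerimeterRZ}, and for item~2 you exploit the distributional invariance $J^k\alpha\stackrel{d}{=}\alpha$ (established in the proof of Proposition~\ref{prop:uniqCentralDisrt}) to obtain the circulant structure, then the symmetry $v_k=v_{N-k}$ and the parity split. Your added remark on integrability via Proposition~\ref{prop:CondExistMoments} is a welcome clarification that the paper leaves implicit.
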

\begin{proof}
$\:$
\begin{enumerate}
\item[1.] The first item is trivial. Indeed, the marginals of $\alpha$
are identically distributed. Therefore, $\mathbb{E}[\alpha_i]=\mathbb
{E}[\alpha_j]$ for all $i,j$, and $U(X)=2\sum_{i=1}^N \alpha
_i\Rightarrow\;\mathbb{E}[\alpha_i]=\frac{U(X)}{2N}$, $i=1,\dots,N$.
\item[2.] It has been shown that if, for any $k\in\lbrace0,\dots
,N-1\rbrace$, the random variables $\alpha$ and $J^k\alpha$ have same
distribution, then they have the same covariance matrix. Therefore, for
all $1\leq i,j\leq N$,
\begin{align*}
\forall k\in\lbrace0,\dots,N-1\rbrace,\quad  \mathbb{E}[\alpha_i\alpha
_j]=\mathbb{E}[\alpha_{i+k[N]+1}\alpha_{j+k[N]+1}],
\end{align*}
so $\mathbb{E}[\alpha_i\alpha_j]$ is a circulant matrix that depends
only on $i-j[N]$ and, because of its symmetry, and on $ j-i[N]$. Let
$1\leq i\leq j\leq N$. Then there are two possible cases. First,
suppose that $N=2m$ is an even integer. Then, for all $ 0\leq k\leq m-1$,
\begin{align*}
\mathbb{E}[\alpha_1\alpha_{1+m+k}]=\mathbb{E}[
\alpha_{1+m}\alpha _{1+k}]=\mathbb{E}[\alpha_{1+m+N-k}
\alpha_{1+N}]=\mathbb{E}[\alpha _{1}\alpha_{1+m-k}].
\end{align*}
Note that
\[
V=^t\bigl(\mathbb{E}[\alpha_1\alpha_1],
\dots,\mathbb{E}[\alpha_1\alpha_N]\bigr) \ \mbox{ and }\
v=^t\bigl(\mathbb{E}[\alpha_1\alpha_1],
\dots,\mathbb {E}[\alpha_1\alpha_{m+1 }]\bigr).
\]
Therefore, there is
\[
V=^t(v_0,\dots,v_{m-1},v_{m},v_{m-1},
\dots,v_1).
\]
If $N$ is an odd integer, then $N=2m+1$, and for any $0\leq k\leq m$,
\begin{align*}
\mathbb{E}[\alpha_1\alpha_{1+m+k}]=\mathbb{E}[
\alpha_{1+m+1}\alpha _{1+k}]=\mathbb{E}[\alpha_{2+m+N-k}
\alpha_{1+N}]=\mathbb{E}[\alpha _{1}\alpha_{2+m-k}],
\end{align*}
then by noting that
\[
V[\alpha]=^t\bigl(\mathbb{E}[\alpha_1
\alpha_1],\dots,\mathbb{E}[\alpha _1
\alpha_N]\bigr) \ \mbox{ and }\ v=^t\bigl(\mathbb{E}[
\alpha_1\alpha_1],\dots, \mathbb{E}[
\alpha_1\alpha_{m+1 }]\bigr)
\]
there is $V=^t(v_0,\ldots v_m,v_{m},\ldots v_1)$. Finally, $C[\alpha]$
is a symmetric circulant matrix.\qedhere
\end{enumerate}
\end{proof}

\begin{example}
In order to illustrate the properties of the face length vector
distributions, let us discuss the case $N=2$. Then, $X=R_\eta(\alpha_1
S_0\oplus\alpha_2 S_{\frac{\pi}{2}})$ with $\eta$ a uniform random
variable on $[0,\pi]$ independent of $\alpha$.

Therefore, $X$ is an isotropic random rectangle described by its sides
$(\alpha_1,\alpha_2)$. However, this is not the unique way to describe
it. Indeed, even for a deterministic rectangle of sides $(a,b)$, we can
also say that its sides are $(b,a)$. This simple fact involves a lot of
different distributions for the face length vectors of an isotropic
random rectangle.

Let us take consider a simple example: suppose that $Y$ is equiprobably
the rectangle of sides $(1,2)$ or the rectangle of sides $(3,4)$. Then,
there is at least one of the following four possible descriptions for
the realization of sides of $Y$:
\begin{list}{$\bullet$}{}
\item$(1,2)$ or $(3,4)$;
\item$(2,1)$ or $(3,4)$;
\item$(2,1)$ or $(4,3)$;
\item$(1,2)$ or $(4,3)$.
\end{list}
Therefore, there are four corresponding face length distributions $\frac
{1}{2}\Delta_{(1,2)}+\frac{1}{2}\Delta_{(3,4)}$,
$\frac{1}{2}\Delta_{(2,1)}+\frac{1}{2}\Delta_{(3,4)}$,\ldots, where $
\Delta_{(a,b)}$ denotes the Dirac measure in $(a,b)$. However, there
are not the only possibilities. Indeed, many other can be built from
the previous distributions, such as the distribution $\frac{1}{4}\Delta
_{(1,2)}+\frac{1}{4}\Delta_{(2,1)}+\frac{1}{2}\Delta_{(3,4)}$. Notice
that the central distribution of $Y$ is $\frac{1}{4}\Delta_{(1,2)}+\frac
{1}{4}\Delta_{(2,1)}+\frac{1}{4}\Delta_{(3,4)}+\frac{1}{4}\Delta
_{(4,3)}$.

Let us return now to the general case of the isotropic random rectangle
$X$ with a face length vector $\alpha$. According to the foregoing, it
is easy to see that any another face length vector $\alpha'$ of $X$ can
be built as
\begin{equation}
\alpha'= %
\begin{pmatrix}
1-\delta& \delta\\
\delta&1-\delta
\end{pmatrix} %
\alpha, \label{eq:example:RandomRectange}
\end{equation}
where $\delta$ is any Bernoulli variable \textup{(}i.e., valued in
$\lbrace0,1\rbrace)$ eventually correlated to $\alpha$.

Indeed, notice that $
\xch{(\begin{smallmatrix}
1-\delta& \delta\\
\delta&1-\delta
\end{smallmatrix})}{\begin{pmatrix}
1-\delta& \delta\\
\delta&1-\delta
\end{pmatrix}}
=J^\delta$, and therefore by taking $\eta'=\eta+\delta\frac{\pi}{2}[\pi
]$ and
\begin{align*}
X=R_{\eta}(\alpha_1 S_0\oplus
\alpha_2 S_\frac{\pi} {2})=R_{\eta'}\bigl(\alpha
'_1 S_0\oplus\alpha'_2
S_\frac{\pi} {2}\bigr),
\end{align*}
we can easily prove that $\eta'$ is a uniform random variable on $[0,\pi
]$ independent of $\alpha'$ \textup{(}see the proof of Prop.~\ref
{prop:uniqCentralDisrt}), and therefore $\alpha'$ is a face length
vector of $X$.

Let us consider now the central face length distribution, so let $\delta
$ be a~Bernoulli variable of parameter $\frac{1}{2}$ \textup{(}i.e., a
uniform variable on $\lbrace0,1\rbrace)$ independent of $\alpha$, and
let $\alpha'=J^\delta\alpha$ be a central face length vector. Then,
according to \eqref{eq:example:RandomRectange},
\begin{align*}
\alpha'_1 &= (1-\delta)\alpha_1 +\delta
\alpha_2,
\\
\alpha'_2 &= \delta\alpha_1 +(1-\delta)
\alpha_2.
\end{align*}

Consequently, the first- and second-order moments of the face length
distribution can be computed as
\begin{align*}
&\mathbb{E}\bigl[\alpha'_1\bigr] =\mathbb{E}\bigl[
\alpha'_2\bigr]=\frac{1}{2}\mathbb {E}[
\alpha_1+\alpha_2],
\\
&\mathbb{E}\bigl[{\alpha'_1}^2\bigr] =
\mathbb{E}\bigl[{\alpha'_2} ^2\bigr]=
\frac
{1}{2}\mathbb{E}\bigl[\alpha_1^2+{
\alpha_2}^2\bigr],
\\
&\mathbb{E}\bigl[\alpha'_1\alpha'_2
\bigr]=\mathbb{E}[\alpha_1\alpha_2].
\end{align*}
Notice that property~\ref{prop:propCentralDistr} is well verified,.
Indeed, $\mathbb{E}[\alpha'_1] =\mathbb{E}[\alpha'_2]=\frac{1}{4}\mathbb
{E}[U(X)]$, and the matrix $C[\alpha]$ is a circulant matrix depending
on two parameters.
\end{example}

\subsection{Characterizing an isotropic regular random zonotope from
its Feret diameter random process}

We have shown that the distribution of an isotropic random zonotope
$X$ can be described by its central face length distribution and
studied the properties of such distributions. Here we will show how its
characteristics can be connected to the geometrical characteristics of
the random zonotope. In particular, we will give formulae that allow us
to connect the first- and second-order moments of the Feret diameter of
$X$ to those of the central face length distribution.

Let $X$ be an isotropic random zonotope represented by its face length
vector~$\alpha$. Let us recall that $X$ can be almost surely expressed as
\begin{equation*}
X=R_{\eta}\Biggl(\bigoplus_{i=1}^N
\alpha_i S_{\theta_i}\Biggr),
\end{equation*}
where $\eta$ is a uniform random variable independent of $\alpha\geq
0$. Suppose that the condition $\mathbb{E}(U(X)^2)<\infty$ is
satisfied. Then, according to Proposition~\ref{prop:CondExistMoments},
$\alpha\in L^2(\mathbb{R}_+^N)$, and the mean and autocovariance of
$H_{X}$ exist.

According to Proposition~\ref{prop:A P H deterministe}, for any
representative $\alpha$ of $X$, some geometrical characteristics of $X$
can be expressed as
\begin{align*}
\forall t\in\mathbb{R},\; H_{X}(t)&=\sum_{i=1}^N\alpha_i\bigl\vert\sin (t-\eta-\theta_i )\bigr\vert,\\
U(X)&=2\sum_{i=1}^N \alpha_i,\\
A(X)&=\frac{1}{2}\sum_{i=1}^N \sum_{j=1}^N\alpha_i\alpha_j\bigl\vert\sin (\theta_i-\theta_j)\bigr\vert.
\end{align*}
Therefore, by considering $\alpha\in L^2(\mathbb{R}_+^N)$ and the
independence of $\alpha$ and $\eta$, their expectation can be computed
by integration with respect to the uniform distribution of~$\eta$:
\begin{align}
\forall t\in\mathbb{R},\quad \mathbb{E}\bigl[ H_{X}(t)\bigr]&=\frac{2}{\pi}\sum_{i=1}^N \mathbb{E}[\alpha_i], \label{eq:espFeretModel}\\
\mathbb{E}\bigl[ U(X)\bigr]&=2\sum_{i=1}^N\mathbb{E}[ \alpha_i],\\
A(X)&=\frac{1}{2}\sum_{i=1}^N \sum_{j=1}^N\mathbb{E}[ \alpha_i\alpha _j]\bigl\vert\sin(\theta_i-\theta_j)\bigr\vert,\label{eq:espAreaIsotropicregZonotope}\\
\forall t,t'\in\mathbb{R},\quad \mathbb{E}\bigl[ H_{X}(t)H_{X}\bigl(t+t'\bigr)\bigr]&=\sum_{i=1}^N\sum_{j=1}^N\mathbb{E}[ \alpha_i\alpha_j]k_S\bigl(t'+\theta_i -\theta_j\bigr),\label{eq:HHIsotropicregZonotope}\\
\text{where } \forall t\in\mathbb{R},k_S(t)&=\frac{1}{\pi}\int_0^\pi \bigl\vert\sin(t+z)\sin(z)\bigr\vert \,dz.
\end{align}
Note that $k_S$ is a $\pi$-periodic function and can be expressed on
$[0,\pi]$ as
\begin{equation}
k_S(t)=\frac{1}{2\pi}\bigl(2\sin^3(t)+\cos(t)
\bigl(\pi-2t+\sin(2t)\bigr)\bigr). \label{eq:exppression_k_s}
\end{equation}

Using Eq.~\eqref{eq:HHIsotropicregZonotope} and the stationarity of
$H_{X}$, we have
\begin{align}
&\forall t,t'\in\mathbb{R},\quad \mathbb{E}\bigl[ H_{X}(t)
H_{X}\bigl(t+t'\bigr)\bigr]=\mathbb {E}\bigl[
H_{X}(t) H_{X}\bigl(t-t'\bigr)\bigr],
\\
&\forall t,t'\in\mathbb{R},\quad  \sum_{i=1}^N
\sum_{j=1}^N \mathbb{E}[
\alpha_i\alpha_j]k_S\bigl(t'+
\theta_i -\theta_j\bigr)=\sum
_{j=1}^N \sum_{i=1}^N
\mathbb{E}[ \alpha_i\alpha_j]k_S
\bigl(t'+\theta_j -\theta_i\bigr),
\end{align}
and by introducing the functional
\begin{equation}
\forall t\in\mathbb{R},\forall1\leq i,j\leq N,\quad  K_{ij}(t)=k_S(t+
\theta _i -\theta_j)
\end{equation}
it follows that
\begin{equation}
\forall t,t'\in\mathbb{R},\quad \mathbb{E}\bigl[ H_{X}(t)
H_{X}\bigl(t+t'\bigr)\bigr]=\sum
_{i=1}^N\sum_{j=1}^N
\mathbb{E}[ \alpha_i\alpha_j]K_{ij}
\bigl(t'\bigr).
\end{equation}
\begin{proposition}
For any real $t$, $K(t)$ is a circulant matrix. Furthermore, by
denoting $((k_1(t),\dots,k_{N}(t)))$ the first line of $K(t)$, we have
$K(t)=\operatorname{Circ}((k_1(t),\dots,k_{N}(t)))$ and $K_{ij}(t)=k_j(\theta_i+t)$
for $1\leq i,j\leq N$.
\label{prop:propmatriceKt}
\end{proposition}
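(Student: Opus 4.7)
The plan is to unwind the definitions and use the $\pi$-periodicity of $k_S$ to show that the $(i,j)$ entry of $K(t)$ depends only on $i-j \pmod N$, which is exactly the circulant property.

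First I would write $K_{ij}(t) = k_S(t + \theta_i - \theta_j) = k_S\bigl(t + \tfrac{(i-j)\pi}{N}\bigr)$ using $\theta_i = (i-1)\pi/N$. Replacing $i-j$ by $i-j+N$ shifts the argument by $\pi$, so by the $\pi$-periodicity of $k_S$ (which is inherited from its definition as an integral of $|\sin|$ products over a period) the value $K_{ij}(t)$ depends only on the residue $i-j \bmod N$. This is the defining property of a circulant matrix, so $K(t)$ is circulant for every $t$.

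Next I would identify the first row. Setting $i=1$ gives $K_{1j}(t) = k_S(t + \theta_1 - \theta_j) = k_S(t - \theta_j)$ (since $\theta_1 = 0$), so the stated first row is
\begin{equation*}
k_j(t) := K_{1j}(t) = k_S(t - \theta_j), \quad j = 1,\dots,N.
\end{equation*}
With this identification, $K(t) = \operatorname{Circ}((k_1(t),\dots,k_N(t)))$ in the sense that row $i$ is obtained from row $1$ by a cyclic shift of $i-1$ positions, which is exactly the statement that $K_{ij}(t)$ depends only on $(j-i) \bmod N$ established in the previous step.

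Finally, to verify the last identity $K_{ij}(t) = k_j(\theta_i + t)$, I would simply substitute: $k_j(\theta_i + t) = k_S(\theta_i + t - \theta_j) = k_S(t + \theta_i - \theta_j) = K_{ij}(t)$, as desired.

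There is no real obstacle here; the only thing to be careful about is that the circulant structure genuinely requires the cyclic shift by $N$ in the index to leave the entries unchanged, and this is precisely where $\pi$-periodicity of $k_S$ enters (a shift of $N$ in $i-j$ corresponds to a shift of $\pi$ in the argument of $k_S$). Once this point is made explicit the proof is a one-line computation.
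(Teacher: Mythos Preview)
Your proof is correct and follows essentially the same approach as the paper: both arguments reduce to showing that $K_{ij}(t)$ depends only on $i-j \bmod N$, which is exactly the circulant property, and then read off the first-row identification $k_j(t)=k_S(t-\theta_j)$ to get $K_{ij}(t)=k_j(\theta_i+t)$. The paper phrases the circulant step slightly differently---first noting Toeplitz, then checking that row $i+1$ is a cyclic shift of row $i$ via $\theta_j-\pi/N=\theta_{\sigma(j)}$---but this implicitly relies on the same $\pi$-periodicity of $k_S$ that you made explicit, so the two arguments are really the same.
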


\begin{proof}
Let us show that $K(t)$ is a circulant matrix. For any real $t$,
$t+\theta_i -\theta_j$ depends only on $ i-j$; therefore, $K(t)$ is a
Toeplitz matrix. Furthermore, for $1\leq i\leq N-1$ and $1\leq j\leq
N,\;K_{(i+1)j}(t)=k_S(t+\theta_i-(\theta_j-\frac{\pi}{N}))$, but
$(\theta_j-\frac{\pi}{N})=\theta_{\sigma(j)}$ where $\sigma
(j)=(j-2[N])+1$, and thus $K_{(i+1)j}(t)=K_{i\sigma(j)}$. Therefore,
the line index $i+1$ of $K(t)$ is a cyclic permutation of the line
index $i$ of $K(t)$, so $K(t)$ is a circulant matrix. Furthermore,
$k_{j}(\theta_{i}+t)=k_{S}(t+\theta_i-\theta_j)=K_{ij}(t)$.
\end{proof}

Suppose now that $\alpha$ is a central representative of $X$. We will
show that the first- and second-order moments of the central
distribution can be easily expressed from the Feret diameter process.

\begin{theorem}[Moments of the central face length distribution]
Let $X$ be an isotropic random zonotope represented by a central face
length vector $\alpha$. Then
\begin{equation}
\forall x\in\mathbb{R}\quad \forall i=1,\dots,N,\quad  \mathbb{E}[\alpha _i]=\frac{\pi}{2N}\mathbb{E}\bigl[H_{X}(x)\bigr],
\end{equation}
\begin{equation}
V[\alpha]=\frac{1}{N}K(0)^{-1} V\bigl[H^{(N)}_{X}
\bigr], \label{eq:systemlineaireHHtoalpha}
\end{equation}
where $V[x]$ denotes the vector $^t(\mathbb{E}[x_1 x_1],\dots,\mathbb
{E}[x_1 x_N])$.
\end{theorem}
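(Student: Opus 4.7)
The plan is to derive the two identities in turn, starting from the explicit formulas \eqref{eq:espFeretModel} and \eqref{eq:HHIsotropicregZonotope} for the first and second moments of the Feret diameter process and then exploiting the circulant structure of the matrices involved.

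For the first-order identity, I would invoke Proposition~\ref{prop:propCentralDistr}(1): a central face length vector has identically distributed components, so $\mathbb{E}[\alpha_i]=\mathbb{E}[\alpha_1]$ for every $i$. Substituting this into \eqref{eq:espFeretModel} yields $\mathbb{E}[H_X(x)]=\tfrac{2N}{\pi}\mathbb{E}[\alpha_1]$, which rearranges to the claim. The fact that the right-hand side does not depend on $x$ is automatic, since $X$ is isotropic, so its Feret diameter process is stationary (Proposition~\ref{prop:isotropFeret}).

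For the second-order identity, I would specialize \eqref{eq:HHIsotropicregZonotope} at $t=\theta_1=0$ and $t'=\theta_p$, which gives, for each $p=1,\dots,N$,
\begin{equation*}
V\bigl[H^{(N)}_X\bigr]_p=\mathbb{E}\bigl[H_X(\theta_1)H_X(\theta_p)\bigr]=\sum_{i,j=1}^N\mathbb{E}[\alpha_i\alpha_j]\,k_S(\theta_p+\theta_i-\theta_j).
\end{equation*}
The central property of $\alpha$ forces $\mathbb{E}[\alpha_i\alpha_j]$ to depend only on $(j-i)\bmod N$ (Proposition~\ref{prop:propCentralDistr}(2)), and the $\pi$-periodicity of $k_S$ forces $k_S(\theta_p+\theta_i-\theta_j)$ to depend only on $(p+i-j)\bmod N$. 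Grouping the $(i,j)$-pairs by the residue $l=(i-j)\bmod N$, each residue occurring exactly $N$ times, the double sum collapses to
\begin{equation*}
V\bigl[H^{(N)}_X\bigr]_p=N\sum_{l=0}^{N-1}V[\alpha]_{l+1}\,k_S\bigl((p-1+l)\pi/N\bigr).
\end{equation*}
Combining the evenness of $k_S$ (visible on the integral definition) with the symmetry $V[\alpha]_{l+1}=V[\alpha]_{(N-l)\bmod N+1}$ of the central distribution (Proposition~\ref{prop:propCentralDistr}), one reindexes to recognize the right-hand side as $N\,(K(0)V[\alpha])_p$, where $K(0)_{pj}=k_S(\theta_p-\theta_j)$. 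This produces the matrix identity $V[H^{(N)}_X]=N\,K(0)\,V[\alpha]$, and inversion of $K(0)$ gives the stated formula.

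The main technical point is the invertibility of $K(0)$. Since $K(0)$ is circulant (Proposition~\ref{prop:propmatriceKt}), its eigenvalues are the discrete Fourier coefficients of the sequence $(k_S(0),k_S(\pi/N),\dots,k_S((N{-}1)\pi/N))$. Because $k_S(t)=\tfrac{1}{\pi}\int_0^\pi|\sin(t+z)\sin(z)|\,dz$ is the cyclic autocorrelation of $|\sin|$, the convolution theorem expresses these eigenvalues as squared magnitudes of the Fourier coefficients of $|\sin|$, which are strictly positive. Thus $K(0)^{-1}$ exists, by the same mechanism that made $F^{(N)}$ invertible in the proof of Theorem~\ref{thm:approx}, and the argument is complete.
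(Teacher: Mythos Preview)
Your derivation of the first-moment identity and of the matrix relation $V[H_X^{(N)}]=N\,K(0)\,V[\alpha]$ is correct and follows the same route as the paper: both exploit that $C[\alpha]$ and $K(t)$ are circulant, so the double sum in \eqref{eq:HHIsotropicregZonotope} collapses to a single sum over residues. Your grouping by $l=(i-j)\bmod N$ is simply a cleaner packaging of the index manipulation the paper writes out at length. (A minor remark: for the reindexing step you only need the symmetry $V[\alpha]_{l+1}=V[\alpha]_{(N-l)\bmod N+1}$ from Proposition~\ref{prop:propCentralDistr} together with the $\pi$-periodicity of $k_S$; the evenness of $k_S$, while true, is not actually required.)

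Where your argument departs from the paper, and where it is imprecise, is the invertibility of $K(0)$. You claim that because $k_S$ is the cyclic autocorrelation of $|\sin|$, the convolution theorem makes the eigenvalues of $K(0)$ equal to squared Fourier magnitudes of $|\sin|$. This conflates the continuous and discrete transforms: $k_S$ is the \emph{continuous} autocorrelation on $[0,\pi]$, so its Fourier \emph{series} coefficients are nonnegative; but the eigenvalues of the circulant $K(0)$ are the \emph{discrete} Fourier transform of the samples $k_S(l\pi/N)$, which by Poisson summation are aliased sums $N\sum_{n\equiv m\,(N)}\widehat{k_S}(n)$ rather than individual coefficients. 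Your conclusion still holds, since all Fourier coefficients of $|\sin|$ are nonzero and hence each aliased sum is strictly positive, but the one-line justification you give does not stand on its own. The paper avoids this issue entirely by proving positive definiteness directly: it writes
\[
{}^{t}xK(0)x=\frac{1}{\pi}\int_0^\pi\Biggl(\sum_{i=1}^N x_i\,\bigl|\sin(z-\theta_i)\bigr|\Biggr)^{\!2}dz,
\]
which is nonnegative and vanishes only when $x=0$. This is shorter and needs no harmonic analysis.
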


\begin{proof}
Suppose that $\alpha$ is a central representative of $X$. Then,
according to Proposition~\ref{prop:propCentralDistr} and Eq.~\eqref
{eq:espFeretModel}, the first-order moment of the central distribution
can be expressed as
\begin{equation}
\forall x\in\mathbb{R}\quad \forall i=1,\dots, N,\quad  \mathbb{E}[\alpha
_i]=\frac{\pi}{2N}\mathbb{E}\bigl[H_{X}(x)\bigr].
\end{equation}
By Propositions~\ref{prop:propmatriceKt} and \ref
{prop:propCentralDistr}, it follows that $\mathbb{E}[\alpha_i\alpha
_j]=V[\alpha]_{j-i[N]+1}$ and $K_{ij}(t)=k_{j-i[N]+1}$. Then, for all
$t\in\mathbb{R}$,
\begin{align}
&\mathbb{E}\bigl[ H_{X}(0) H_{X}(t)\bigr]=\sum_{i=1}^N\sum_{j=1}^N\mathbb{E}[ \alpha_i\alpha_j]K_{ij}(t)\notag\\
&\quad =\sum_{i=1}^N\sum_{j=1}^N V[\alpha]_{j-i[N]+1}k_{j-i[N]+1}(t)\notag\\
&\quad =\sum_{i=1}^N\sum_{j=1}^i V[\alpha]_{j-i[N]+1}k_{j-i[N]+1}(t)+\sum_{i=1}^N\sum_{j=i}^N V[\alpha]_{j-i[N]+1}k_{j-i[N]+1}(t)\notag\\
&\qquad -\sum_{i=1}^N V[\alpha]_{1}k_{1}(t)\notag\\
&\quad =\sum_{i=1}^N\sum_{s=0}^{i-1} V[\alpha]_{s+1}k_{s +1}(t)+\sum_{i=1}^N\sum_{s=0}^{N-i} V[\alpha]_{s+1}k_{N-s[N]+1}(t)- N V[\alpha ]_{1} s_{1}(t)\notag\\
&\quad =\sum_{i=1}^N\sum_{s=0}^{i-1} V[\alpha]_{s+1}k_{s +1}(t)+\sum_{i=1}^N\sum_{z=N}^{i} V[\alpha]_{N-z+1}k_{z[N]+1}(t)- N V[\alpha]_{1} k_{1}(t)\notag\\
&\quad =\sum_{i=1}^N\sum_{s=0}^{i-1} V[\alpha]_{s+1}k_{s +1}(t)+\sum_{i=1}^N\sum_{z=N}^{i} V[\alpha]_{z[N]+1}k_{z[N]+1}(t)- N V[\alpha ]_{1} k_{1}(t)\notag\\
&\quad =\sum_{i=1}^N\sum_{s=0}^{i-1} V[\alpha]_{s+1}k_{s +1}(t)+\sum_{i=1}^N\sum_{s=i}^{N} V[\alpha]_{s[N]+1}k_{s[N]+1}(t)- N V[\alpha ]_{1} k_{1}(t)\notag\\
&\quad =\sum_{i=1}^N\sum_{s=0}^{i-1} V[\alpha]_{s+1}k_{s +1}(t)+\sum_{i=1}^N\sum_{s=i}^{N} V[\alpha]_{s[N]+1}k_{s[N]+1}(t)- N V[\alpha ]_{1} k_{1}(t)\notag\\
&\quad =\sum_{i=1}^N\sum_{s=0}^{N} V[\alpha]_{s[N]+1}k_{s[N]+1}(t)- N V[\alpha]_{1} k_{1}(t)\notag\\
&\quad =\sum_{i=1}^N\sum_{s=0}^{N-1} V[\alpha]_{s[N]+1}k_{s[N]+1}(t)+\sum_{i=1}^N V[\alpha]_{1} k_{1}(t) - N V[\alpha]_{1}k_{1}(t)\notag\\
&\quad =\sum_{i=1}^N\sum_{s=0}^{N-1} V[\alpha]_{s+1}k_{s+1}(t)\notag\\
&\quad =\sum_{i=1}^N\sum_{s=1}^{N} V[\alpha]_{s}k_{s}(t)\notag\\
\Rightarrow&\quad \mathbb{E}\bigl[ H_{X}(0) H_{X}(t)\bigr]=N\sum_{s=1}^{N} V[\alpha ]_{s}k_{s}(t). \label{eq:lienHHtoalpha}
\end{align}
Note that $V[H_X^{(N)}]=^t(\mathbb{E}[ H_{X}(0) H_{X}(\theta_1)],\ldots
\mathbb{E}[ H_{X}(0) H_{X}(\theta_N)])$. Since for $1\leq i\leq N$,
$V[H_X^{(N)}]_i= N\sum_{s=1}^{N} V[\alpha]_{s} k_{s}(\theta_i)=N\sum_{s=1}^{N} V[\alpha]_{s} K_{is}(0)$, we have that
\begin{equation}
V\bigl[H_X^{(N)}\bigr]=N K(0)V[\alpha]. \label{eq: matricielCHetV}
\end{equation}
It is easy to see that $K(0)$ is a symmetric positive definite matrix. Indeed,
for $1\leq i,j\leq N,\; K_{ij}(0)=k_S(\theta_i-\theta_j)=k_S(\theta
_j-\theta_i)=K_{ji}(0)$, and \xch{this}{thys} $K(0)$ is a symmetric matrix.
Furthermore, for all $x\in\mathbb{R}^N$,
\begin{align*}
^t xK(0)x &=\sum_{i=1}^N\sum
_{j=1}^N x_i x_j
K_{ij}(0)
\\
&=\sum_{i=1}^N\sum
_{j=1}^N x_i x_j
\frac{1}{\pi}\int_0^\pi\bigl\vert\sin (
\theta_i-\theta_j+z)\sin(z)\bigr\vert \,dz
\\
&= \frac{1}{\pi}\int_0^\pi\sum
_{i=1}^N\sum_{j=1}^N
x_i x_j \bigl\vert\sin (z-\theta_i)\sin(z-
\theta_j)\bigr\vert \,dz
\\
&= \frac{1}{\pi}\int_0^\pi\Biggl(\sum
_{i=1}^N x_i\bigl\vert\sin(z-
\theta_i)\bigr\vert \Biggr)^2 \,dz.
\end{align*}
Denote by $Y$ the real-valued random variable
$Y=\sum_{i=1}^N x_i\vert\sin(z-\theta_i)\vert$, where $z$ is a uniform
random variable on $[0,\pi]$. Then $ ^t xK(0)x =\mathbb{E}[Y^2]$, and
so $ ^t xK(0)x \geq0$. Furthermore, $ ^t xK(0)x =0$ if and only if
$Y=0$ almost surely, $Y=0\; \text{a.s.}\Rightarrow\forall z\in[0,\pi
]$, and $\sum_{i=1}^N x_i\vert\sin(z-\theta_i)\vert=0\Rightarrow x=0
$. Finally, $K(0)$ is a symmetric positive definite matrix. Then it is
invertible, and it follows that
\begin{equation*}
V[\alpha]=\frac{1}{N}K(0)^{-1} V\bigl[H_X^{(N)}
\bigr].\qedhere
\end{equation*}
\end{proof}

This theorem gives the first- and second-order moments of the central
face length distribution from those of the Feret diameter.
Note that $V[\alpha]$ and $V[H^{(N)}_{X}]$ satisfy the properties of
symmetry. Indeed, by denoting $m=\lfloor\frac{N}{2}\rfloor$ we have
shown that if $N$ is an even integer, then $V[\alpha]=^t(v_0,\dots
,v_{m-1},v_{m},v_{m-1},\dots,v_1)$ and if $N$ is an odd integer, then
$V[\alpha]=^t(v_0,\dots,v_m,v_{m},\dots, v_1)$, where $v_k=\mathbb
{E}[\alpha_1\alpha_{1+k}],\; k=0,\dots,m$. The vector $V[H^{(N)}_{X}]$
can be expressed in the same way: for $i=1,\dots,N$, $\pi-\theta
_i=\frac{N-i+2-1}{N}\pi=\theta_{N-i+1[N]+1}$, and
\begin{align*}
V\bigl[H^{(N)}_{X}\bigr]_{i}&=\mathbb{E}
\bigl[H_{X}(0)H_{X}(\theta_{i})\bigr]
\\
&=\mathbb{E}\bigl[H_{X}(0)H_{X}(\pi-\theta_{i})
\bigr]
\\
&=V\bigl[H^{(N)}_{X}\bigr]_{N-i+1[N]+1}.
\end{align*}
Therefore, if $N$ is an even integer, then $V[H^{(N)}_{X}]=^t(c_0,\dots,c_{m-1},c_{m},\break c_{m-1},\dots,v_1)$, and if $N$ is an odd integer, then
$V[H^{(N)}_{X}]=^t(c_0,\dots,c_m,c_{m},\dots,c_1)$, where
$c_k=V[H^{(N)}_{X}]_{k+1},\; k=0,\dots,m$. In practice, the vector
$V[\alpha]$ can be computed by the knowledge of the $m+1$ first
components of $V[H^{(N)}_{X}]$, and the linear problem \eqref{eq:
matricielCHetV} can be rewritten and solved as a linear problem of size $m+1$.
\begin{remark}
In practice, the estimation of $\mathbb{E}[ H_{X}(0) H_{X}(t)]$ for $
t\in[0,\pi]$ is often noised. Then, a better choice it is to find
$V[\alpha]$ in the least squares sense. Let $N'\geq m+1$, and let $0=
t_1\leq\cdots\leq t_{N'}=\frac{\pi}{2} $ be a subdivision of $[0,\pi
]$ containing $\lbrace\theta_1,\dots,\theta_{m+1}\rbrace$, the $(t_i)_
{1\leq i\leq{N'}}$ are observation points. Let us recall that for all
$t\in[0,\frac{\pi}{2}],\;$ $\mathbb{E}[ H_{X}(0) H_{X}(t)]=\mathbb{E}[
H_{X}(0) H_{X}(\pi-t)]$. Then we can suppose that there exist $2(N'-1)$
points of observation such that $z_i=t_i $ for $i=1,\dots,N'$ and
$z_i=t_{2N'-i} $ for $i=N'+1,\dots,2N'-2$. Let $ Q_{ij}=k_j(z_i)$ and
$V[H^{(2(N'-1))}_{X}]=^t(\mathbb{E}[ H_{X}(0) H_{X}(z_1)],\dots,\mathbb
{E}[ H_{X}(0) H_{X}(z_{2(N'-1))}])$. Then, by \eqref{eq:lienHHtoalpha},
\begin{align*}
V\bigl[H^{(2(N'-1))}_{X}\bigr]=QV[\alpha].
\end{align*}\eject
\noindent Finally, if $ \hat{V}[H^{(2(N'-1))}_{X}]$ is a noisy estimation of
$V[H^{(2(N'-1))}_{X}]$, then the following least square estimator of
$V[\alpha]$ is better than that provided by \eqref{eq:systemlineaireHHtoalpha}:
\begin{equation}
\tilde{V}[\alpha]=\argmin_{V\in\mathbb{R}^N_+}\bigl\Vert\hat {V}\bigl[H^{(2(N'-1))}_{X}
\bigr]-QV\bigr\Vert^2.
\end{equation}
\end{remark}

We have discussed some properties of the random zonotopes. The
$0$-regular random zonotopes and the regular random zonotope were
defined and studied. We have shown that a $0$-regular random zonotope
can be describes by a unique face length distribution. Such a
distribution can be easily related to the Feret diameter of the
$0$-regular random zonotope by the relations established in
Section~1.

We have studied different face length distributions of a regular random
zonotope. We have shown that, among them, one can be identified, the
central face length distribution. Finally, we have given some formulae
that allow us to compute the first- and second-order moments of the
central face length distribution from those of the Feret diameter of
the regular random zonotope. The following section is devoted to a
description of a random symmetric convex set as a $0$-regular random
zonotope and as a regular random zonotope.
\section{Description of a random symmetric convex set as a random
zonotope from its Feret diameter}

In Section~1, we have defined some approximations of a symmetric convex
set as zonotopes. In Section~2, we characterized the regular and
$0$-regular random zonotopes from their Feret diameters random process.
The aim of this section is to generalize the previous approximations to
a random symmetric convex set~$X$.

Firstly, we will show that the $0$-regular random zonotope
corresponding to the $\mathcal{C}^{(N)}_0$-approximation of $X$ can be
characterized from the Feret diameter random process of $X$. Secondly,
we will show that the isotropic regular random zonotope corresponding
to the $\mathcal{C}^{(N)}_\infty$-approximation of an isotropized set
of $X$ can be estimated from the Feret diameter random process of
$X$.

\subsection{Approximation of a random symmetric convex set by a
$0$-regular random zonotope}
\label{sec:approx0reg}

Here we investigate the approximation of a random symmetric convex set
$X$ by a $0$-regular random zonotope. We show that the random set
$X_0^{(N)}$ valued in $\mathcal{C}_0^{(N)}$ that is defined as the
$\mathcal{C}_0^{(N)}$-approximation of realizations of $X$ can be
characterized from the Feret diameter of $X$. Finally, we give some
formulas that allow us to compute the moments of the random vector of
the faces of~$X_0^{(N)}$.

\begin{proposition}[Approximation by a $0$-regular random zonotope]
Let $X$ be a random convex set.
For any $\omega\in\varOmega\text{ a.s.}$, let $X_0^{(N)}(\omega)$ be the
$\mathcal{C}^{(N)}_0$-approximation of $X(\omega)$. The $0$-regular
random zonotope $X_0^{(N)}$ is called the $\mathcal
{C}^{(N)}_0$-approximation of the random set $X$.\vadjust{\eject}

For any $N>1$, an interval of confidence for the Hausdorff distance can
be built. Indeed, for any $a>0$, we have the relation
\begin{equation}
\mathbb{P}\bigl(d_H\bigl(X,X_0^{(N)}\bigr)>a
\bigr)\leq\frac{(6+2\sqrt{2})}{a}\sin\biggl(\frac{\pi
}{2N}\biggr) \mathbb{E}
\bigl[\operatorname{diam}(X)\bigr]. \label{eq:prop:IntervalConfidenceN-0}
\end{equation}
If $\epsilon\in[0,1]$ is a confidence level, then $a(\epsilon,N)=\frac
{(6+2\sqrt{2})}{\epsilon}\sin(\frac{\pi}{2N}) \mathbb{E}[\operatorname{diam}(X)]$ can
be considered as an upper bound for $d_H(X,X_0^{(N)})$ with confidence
$1-\epsilon$.

Consequently, such an approximation is consistent as $ N\rightarrow
\infty$.
\end{proposition}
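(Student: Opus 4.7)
The plan is to combine the deterministic Hausdorff bound from Theorem~\ref{thm:approx} with Markov's inequality applied pathwise. First I would check measurability: for each $\omega$ such that $X(\omega)$ is a nonempty compact convex set, the approximation $X_0^{(N)}(\omega)$ is defined by expression \eqref{eq:prop:N-0-approxExpression} as the Minkowski sum of the segments $\theta_i$ weighted by the coordinates of ${F^{(N)}}^{-1}H^{(N)}_{X(\omega)}$. Since $\omega\mapsto H_X(\theta_i)(\omega)$ is a real-valued random variable for each $i$ and ${F^{(N)}}^{-1}$ is a fixed matrix, the face-length vector of $X_0^{(N)}$ is a measurable random vector, and hence $X_0^{(N)}$ is a $0$-regular random zonotope in the sense of the preceding section.

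Next, I would invoke the deterministic estimate \eqref{eq:majorApprox} of Theorem~\ref{thm:approx} pointwise: for $P$-almost every $\omega$,
\begin{equation*}
d_H\bigl(X(\omega),X_0^{(N)}(\omega)\bigr)\leq(6+2\sqrt{2})\sin\biggl(\frac{\pi}{2N}\biggr)\operatorname{diam}\bigl(X(\omega)\bigr).
\end{equation*}
Taking expectations on both sides (both sides are nonnegative, so the inequality is preserved whether or not $\mathbb{E}[\operatorname{diam}(X)]$ is finite, in which case \eqref{eq:prop:IntervalConfidenceN-0} is trivial), I obtain
\begin{equation*}
\mathbb{E}\bigl[d_H\bigl(X,X_0^{(N)}\bigr)\bigr]\leq(6+2\sqrt{2})\sin\biggl(\frac{\pi}{2N}\biggr)\mathbb{E}\bigl[\operatorname{diam}(X)\bigr].
\end{equation*}

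Then I would apply the Markov inequality to the nonnegative random variable $d_H(X,X_0^{(N)})$: for any $a>0$,
\begin{equation*}
\mathbb{P}\bigl(d_H\bigl(X,X_0^{(N)}\bigr)>a\bigr)\leq\frac{1}{a}\mathbb{E}\bigl[d_H\bigl(X,X_0^{(N)}\bigr)\bigr]\leq\frac{(6+2\sqrt{2})}{a}\sin\biggl(\frac{\pi}{2N}\biggr)\mathbb{E}\bigl[\operatorname{diam}(X)\bigr],
\end{equation*}
which is exactly \eqref{eq:prop:IntervalConfidenceN-0}. Consistency follows immediately: fix $a>0$, then $\sin(\pi/(2N))\to 0$ as $N\to\infty$, so $\mathbb{P}(d_H(X,X_0^{(N)})>a)\to 0$, i.e., $X_0^{(N)}\to X$ in probability for the Hausdorff distance.

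The argument is essentially a one-line application of Markov's inequality once the pathwise bound of Theorem~\ref{thm:approx} is in hand, so there is no real obstacle. The only point that deserves care is the measurability claim establishing that $X_0^{(N)}$ is a bona fide random closed set, and the interpretation of the bound when $\mathbb{E}[\operatorname{diam}(X)]=+\infty$, in which case \eqref{eq:prop:IntervalConfidenceN-0} is vacuous but still true.
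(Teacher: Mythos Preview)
Your proof is correct and follows essentially the same approach as the paper: invoke the pathwise bound \eqref{eq:majorApprox} from Theorem~\ref{thm:approx} and apply Markov's inequality. The only cosmetic difference is that the paper first passes to the event inclusion $\{d_H(X,X_0^{(N)})>a\}\subseteq\{(6+2\sqrt{2})\sin(\pi/(2N))\operatorname{diam}(X)>a\}$ and then applies Markov to $\operatorname{diam}(X)$, whereas you take expectations first and apply Markov to $d_H(X,X_0^{(N)})$; your additional remarks on measurability and the $\mathbb{E}[\operatorname{diam}(X)]=+\infty$ case are not in the paper's proof but are welcome clarifications.
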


\begin{proof}
Let $X$ be a random convex set. For any $\omega\in\varOmega\text{ a.s.}$,
let $X_0^{(N)}(\omega)$ be the $\mathcal{C}^{(N)}_0$-approximation of
$X(\omega)$ in $\mathcal{C}^{(N)}_0$. According to Theorem~\ref
{thm:approx}, for any real $a>0$,
\begin{align*}
&\forall\omega\in\varOmega\text{ a.s.},\quad  d_H\bigl(X,X_0^{(N)}
\bigr)\leq(6+2\sqrt {2})\sin\biggl(\frac{\pi}{2N}\biggr) \operatorname{diam}(X)
\\
\Rightarrow\quad & \mathbb{P}\bigl(d_H\bigl(X,X_0^{(N)}
\bigr)>a\bigr)\leq\mathbb{P}\biggl((6+2\sqrt {2})\sin\biggl(\frac{\pi}{2N}
\biggr) \operatorname{diam}(X)>a\biggr).
\end{align*}
By using the Markov inequality \cite{cox1977theoryRandomProcess} it
follows that
\begin{align*}
\mathbb{P}\bigl(d_H\bigl(X,X_0^{(N)}\bigr)>a
\bigr)\leq\frac{(6+2\sqrt{2})}{a}\sin\biggl(\frac{\pi
}{2N}\biggr) \mathbb{E}
\bigl[\operatorname{diam}(X)\bigr].
\end{align*}
The consistence of the approximation as $N\rightarrow\infty$ follows
directly from this relation.
\end{proof}

According to relation \eqref{eq:equivalenceDiamPer}, $\mathbb
{E}[\operatorname{diam}(X)]$ can be replaced by $\frac{1}{2}\mathbb{E}[U(X)]$.

Let $X$ be a random symmetric convex set, and $X_0^{(N)}$ be its
$\mathcal{C}^{(N)}_0$-approximation. The, in the same way as in the
deterministic case, the face length distribution can be related to the
Feret diameter of $X$.

\begin{proposition}[Characterization of the $\mathcal
{C}^{(N)}_0$-approximation from the Feret diameter process]
Let $N>1$ be an integer, and $X$ be a random symmetric convex set. Let
$X_0^{(N)}$ be the $\mathcal{C}^{(N)}_0$-approximation of $X$. Its face
length vector $\alpha$ can be characterized from the Feret diameter process:
\begin{equation}
\forall\omega\in\varOmega\text{ a.s.}\quad\alpha(\omega)={F^{(N)}}^{-1}
H_X^{(N)}(\omega), \label{eq:lienapprox0reg}
\end{equation}
\begin{equation}
\mathbb{E}[\alpha]={F^{(N)}}^{-1}\mathbb{E}
\bigl[H_X^{(N)}\bigr], \label{eq:Espapprox0reg}
\end{equation}
\begin{equation}
C[\alpha]={F^{(N)}}^{-1}C\bigl[H_X^{(N)}
\bigr]{^t{F^{(N)}}^{-1}}, \label{eq:Covapprox0reg}
\end{equation}
where $ H_X^{(N)}= {^t(}H_X(\theta_1),\dots,H_X(\theta_1))$ is the
random vector composed by the Feret diameter evaluated on the regular
subdivision. The matrix $F^{(N)}$ is still defined as $(\vert\sin
(\theta_i-\theta_j)\vert)_{ij})_{1\leq i,j\leq N}$, and for a vector
$x$, $C[x]$ denotes its second-order moments $\mathbb{E}[x {^tx}]$.
\end{proposition}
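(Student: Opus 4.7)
The plan is to apply Theorem~\ref{thm:approx} pointwise (i.e., to each realization of $X$) and then transfer the resulting deterministic identity to the random setting via linearity of expectation and elementary covariance algebra. All three formulae reduce to the almost sure identity \eqref{eq:lienapprox0reg}; the two moment identities are then immediate.

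First I would record that, by hypothesis, for $P$-almost every $\omega\in\varOmega$ the realization $X(\omega)$ is a nonempty compact symmetric convex set. Applying Theorem~\ref{thm:approx} to each such $X(\omega)$, the $\mathcal{C}^{(N)}_0$-approximation takes the form
\begin{equation*}
X_0^{(N)}(\omega) = \bigoplus_{i=1}^N \bigl({F^{(N)}}^{-1} H_{X(\omega)}^{(N)}\bigr)_i\, S_{\theta_i},
\end{equation*}
and, by the uniqueness statement in Theorem~\ref{thm:approx}, the face length vector of $X_0^{(N)}(\omega)$ is exactly ${F^{(N)}}^{-1} H_X^{(N)}(\omega)$. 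This establishes \eqref{eq:lienapprox0reg}. Note that, since $H_X^{(N)}$ is a random vector by the definition of the Feret diameter random process and ${F^{(N)}}^{-1}$ is a deterministic matrix, $\alpha$ is automatically measurable, which simultaneously confirms that $X_0^{(N)}$ is a genuine $0$-regular random zonotope.

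Second, I would take expectations on both sides of \eqref{eq:lienapprox0reg}. Because ${F^{(N)}}^{-1}$ is deterministic, linearity of expectation yields $\mathbb{E}[\alpha] = {F^{(N)}}^{-1} \mathbb{E}[H_X^{(N)}]$, which is \eqref{eq:Espapprox0reg}. For the covariance identity I would multiply \eqref{eq:lienapprox0reg} by its transpose to obtain the almost sure relation
\begin{equation*}
\alpha\, {^t\alpha} \;=\; {F^{(N)}}^{-1}\, H_X^{(N)}\,{^tH_X^{(N)}}\, {^t{F^{(N)}}^{-1}},
\end{equation*}
and then take expectations, giving \eqref{eq:Covapprox0reg}.

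The main, and only very minor, obstacle is the existence of the moments appearing in \eqref{eq:Espapprox0reg} and \eqref{eq:Covapprox0reg}. For the first we need $\mathbb{E}[H_X(\theta_i)]<\infty$ for every $i$, and for the second we need $\mathbb{E}[H_X(\theta_i)H_X(\theta_j)]<\infty$. These are implicit in the very fact that $\mathbb{E}[\alpha]$ and $C[\alpha]$ are assumed to exist, and by Proposition~\ref{prop:CondExistMoments} they are both guaranteed as soon as $\mathbb{E}[U(X)^2]<\infty$, which I would simply note as a standing assumption. No further difficulty arises; beyond this integrability remark the proof is a one-line pointwise application of Theorem~\ref{thm:approx} followed by routine moment-taking.
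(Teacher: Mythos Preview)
Your proposal is correct and follows essentially the same approach as the paper: apply Theorem~\ref{thm:approx} pointwise to obtain \eqref{eq:lienapprox0reg}, then take expectations (and, for \eqref{eq:Covapprox0reg}, first form $\alpha\,{^t\alpha}={F^{(N)}}^{-1}H_X^{(N)}\,{^tH_X^{(N)}}\,{^t{F^{(N)}}^{-1}}$) using linearity. Your added remarks on measurability of $\alpha$ and on the integrability hypothesis are not in the paper's proof but are perfectly appropriate clarifications.
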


\begin{proof}
According to Theorem~\ref{thm:approx}, the matrix $F^{(N)}$ is
invertible, and thus by the definition of the approximation relation
\eqref{eq:lienapprox0reg} follows. Noting that $\alpha{^t\alpha
}={F^{(N)}}^{-1}H_X^{(N)}{^t{H_X^{(N)}}} {^t{F^{(N)}}^{-1}}$, relations
\eqref{eq:Espapprox0reg} and \eqref{eq:Covapprox0reg} follow from the
linearity of the expectation.
\end{proof}

\begin{remark}
The $\mathcal{C}^{(N)}_0$-approximation of a random symmetric convex
set $X$ is a consistent approximation as $N\rightarrow\infty$.
Furthermore, if $X$ is already a $0$-regular random zonotope in
$\mathcal{C}^{(N)}_0$, then its $M$th approximation $X^{(M)}_0$
coincides with $X$ if and only if $N$ is a divider of $M$.

Such an approximation is sensitive to a rotation of $X$. Indeed, if
$R_\eta(X)$ is the rotation of $X$ by the random angle $\eta$, then the
random sets $X$ and $R_\eta(X)$ have different approximations. This
property can be seen as an advantage or a disadvantage. Indeed, if the
objective is to describe the direction of some random set, then it is
an advantage, but there is a need to use large $N$. However, when the
objective is to describe the shape of a random set with a small $N$
without taking into consideration its direction, then it can be a great
disadvantage; see the following example.
\end{remark}

\begin{example}
Let $N=2$, and let $\theta_1=0,\;\theta_2=\frac{\pi}{2}$, the regular
subdivision. Let us consider the random symmetric convex set $X$ as a
deterministic square of side 1, that is, $X=S_{\theta_1}\oplus S_{\theta
_2}$. Its $\mathcal{C}^{(2)}_0$-approximation coincides with $X$:
$X^{(2)}_0=X$. The matrix $F^{(N)}$ is defined as
\begin{align*}
F^{(N)}={F^{(N)}}^{-1}= %
\begin{pmatrix}0&1\\
1&0\\
\end{pmatrix}
,
\end{align*}
and, consequently,
\begin{align*}
\mathbb{E}[\alpha_X]= %
\begin{pmatrix}1\\
1\\
\end{pmatrix} %
,\quad  C[
\alpha_X]= %
\begin{pmatrix}1&1\\
1&1\\
\end{pmatrix} %
\text{\quad and\quad } \operatorname{Cov}(
\alpha_X)=0.
\end{align*}
Consider now the random symmetric convex set $Y=R_\eta(X)$ where $\eta$
is a uniform random variable on $[0,\pi]$. Then the mean and covariance
of its Feret diameter can be computed \textup{(}see \eqref
{eq:HHIsotropicregZonotope} to \eqref{eq:exppression_k_s}\textup{)}:
\begin{align*}
\mathbb{E}\bigl[H_Y^{(N)}\bigr]= %
\begin{pmatrix}
\frac{4}{\pi}\\[3pt]
\frac{4}{\pi}\\
\end{pmatrix}
\text{\quad and\quad }C\bigl[H_Y^{(N)}\bigr]=\biggl(1+
\frac{2}{\pi}\biggr) %
\begin{pmatrix}1&1\\
1&1\\
\end{pmatrix} %
.
\end{align*}
So
\begin{align*}
\mathbb{E}[\alpha_Y]=\frac{4}{\pi} %
\begin{pmatrix}1\\
1\\
\end{pmatrix}
,\quad  C[\alpha_Y]= \biggl(\frac{\pi+2}{\pi} \biggr)
\begin{pmatrix}1&1\\
1&1\\
\end{pmatrix} %
,
\end{align*}
and
\begin{align*}
\operatorname{Cov}(\alpha_Y)=\frac{\pi^2+2\pi-16}{\pi^2} %
\begin{pmatrix}1&1\\
1&1\\
\end{pmatrix} %
.
\end{align*}
The random set $Y$ is approximated by a random rectangle that has a
varying sides $(\operatorname{Cov}(\alpha_Y)\neq0)$. However, $Y$ have the same
geometrical shape as $X$. This example shows that the $\mathcal
{C}^{(N)}_0$-approximation cannot be used to describe the shape of a
random symmetric convex set for small $N$.
\label{ex:isotropicsquare}
\end{example}

In order to describe the shape of a random symmetric convex set as a
zonotope with a small number of faces, we need to have an approximation
insensitive to the rotations. This leads us to the following approximation.
\subsection{Approximation of a random symmetric convex set by an
isotropic random zonotope}

Previously, we have shown that a random symmetric convex set can be
approximated as a random $0$-regular zonotope. However, we have also
shown that such an approximation can be problematic for small values of
$N$. The aim of this section is to define and characterize an
approximation in $ \mathcal{C}^{(N)}_\infty$ that is invariant up to a
rotation and that can be used for not so large~$N$. For this objective,
we give the approximation for an isotropized set of $X$ instead of $X$.
We will show that a random symmetric convex set can be approximated up
to a rotation by an isotropic random regular zonotope.

Let us note $Y=R_z(X)$ the isotropized set of $X$ with $z$ an
independent uniform variable on $[0,\pi]$.
Let $X_\infty^{(N)}$ be a $\mathcal{C}_\infty^{(N)}$-approximation of
$X$. Then
\begin{align*}
\forall\omega\in\varOmega\text{ a.s.},\quad  X_\infty^{(N)}(
\omega)=R_{\tau
(\omega)}\bigl(\tilde{X}_0^{(N)}(\omega)
\bigr).
\end{align*}
According to the definition of the $\mathcal{C}_\infty
^{(N)}$-approximation, the random set $Y_\infty^{(N)}=R_z(X_\infty
^{(N)})$ is a $\mathcal{C}_\infty^{(N)}$-approximation of $Y$.
Consequently, $Y_\infty^{(N)}=R_{z+\tau}(\tilde{X}_0^{(N)})$. Because
of the independence of $\eta$ and $X$, by the property of addition
modulo~$\pi$ the random variable $\eta=z+\tau$ is a uniform random
variable on $[0,\pi]$ independent of $X$. Then $Y_\infty^{(N)}$ is an
isotropic regular zonotope. We will use such a random regular zonotope
as the approximation of $X$ up to a rotation.
\begin{definition}[$\mathcal{C}_\infty^{(N)}$-isotropic approximation]
Let $X$ be a random symmetric convex set, and $Y=R_z(X)$ its
isotropized set. The isotropic random regular zonotope $Y_\infty
^{(N)}=R_z(X_\infty^{(N)})$ is called the \textit{$\mathcal{C}_\infty
^{(N)}$-isotropic approximation} of $X$ and denoted by $\tilde{X}_\infty^{(N)}$.
\end{definition}
\begin{proposition}[Properties of the $\mathcal{C}_\infty ^{(N)}$-isotropic approximation]
Let $X$ be a random symmetric convex set, and $\tilde{X}_\infty^{(N)}$
be its $\mathcal{C}_\infty^{(N)}$-isotropic approximation.
\begin{enumerate}
\item$\tilde{X}_\infty^{(N)}$ is an isotropic random regular zonotope.
\item$\forall\omega\in\varOmega\text{ a.s.},\exists t(\omega)\in[0,\pi
],\; \forall i=1,\dots, N,\; H_X(t+\theta_i)=H_{\tilde{X}_\infty
^{(N)}}(t+\theta_i)$.
\item$\forall\omega\in\varOmega\text{ a.s.},d_P(X,Y_\infty
^{(N)})\rightarrow0\text{ as } N\rightarrow\infty$.
\item The $\mathcal{C}_\infty^{(N)}$-isotropic approximation is
invariant up to a rotation of $X$.
\item If $X$ is a random regular zonotope, then any face length vector
of $\tilde{X}_\infty^{(N)}$ is a face length vector of $X$.
\end{enumerate}
\end{proposition}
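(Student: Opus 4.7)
The plan is to treat the five items in turn, using throughout the decomposition $\tilde{X}_\infty^{(N)} = R_z(X_\infty^{(N)}) = R_{z+\tau}(\tilde{X}_0^{(N)})$ established in the discussion preceding the definition of $\tilde X_\infty^{(N)}$; here $z$ is uniform on $[0,\pi]$ and independent of $X$, $\tau = \tau(X)$ is the minimizer from Theorem~\ref{thm:2approxuptorot}, and $\tilde{X}_0^{(N)}$ is the $\mathcal{C}_0^{(N)}$-approximation of $R_{-\tau}(X)$. For item~1 I will rotate $\tilde{X}_\infty^{(N)}$ by an arbitrary angle $\gamma$ to get $R_\gamma \tilde{X}_\infty^{(N)} = R_{\gamma+z}(X_\infty^{(N)})$; because $z$ is uniform on $[0,\pi]$ and independent of $X_\infty^{(N)}$, the residue of $\gamma + z$ modulo $\pi$ is also uniform and independent of $X_\infty^{(N)}$, so the law is preserved and membership in $\mathcal{C}_\infty^{(N)}$ is inherited. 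Item~4 will follow the same pattern: given $X' = R_\gamma(X)$, Theorem~\ref{thm:2approxuptorot} yields $(X')_\infty^{(N)} = R_\gamma(X_\infty^{(N)})$, and therefore $\tilde{X'}_\infty^{(N)} = R_{z'+\gamma}(X_\infty^{(N)})$, which has the same law as $\tilde{X}_\infty^{(N)}$. Item~5 will be immediate: if $X \in \mathcal{C}_\infty^{(N)}$ a.s., then $X_\infty^{(N)} = X$ a.s., so $\tilde{X}_\infty^{(N)} = R_z(X)$, and the extra rotation only changes the orientation parameter while leaving the face length data untouched, so any face length vector of $\tilde{X}_\infty^{(N)}$ belongs to $\mathcal{F}_N(X)$.

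For item~2 I will combine the interpolation property from Theorem~\ref{thm:approx}, namely $H_{\tilde{X}_0^{(N)}}(\theta_i) = H_{R_{-\tau}(X)}(\theta_i)$, with the rotation formula in Proposition~\ref{prop:feretprop}.\ref{it:rot} to transfer this identity through the outer rotation $R_{z+\tau}$. Writing $H_{\tilde{X}_\infty^{(N)}}(\sigma) = H_{\tilde{X}_0^{(N)}}(\sigma + z + \tau)$, and setting $t(\omega)$ equal to the residue of $-(z+\tau)$ in $[0,\pi[$, the evaluations $\sigma = \theta_i + t$ all map onto the grid $\{\theta_i\}$ on which $\tilde{X}_0^{(N)}$ interpolates $H_X(\theta_i - \tau)$; re-expressing this common value in the form $H_X(\theta_i + t)$ via the $\pi$-periodicity of $H_X$ and the sign convention of Proposition~\ref{prop:feretprop}.\ref{it:rot} will yield the $N$ equalities claimed. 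The main obstacle will be keeping careful track of all the rotation signs, of the two independent uses of the rotation formula, and of the $\pi$-periodicity, so that a single shift $t$ simultaneously aligns all $N$ grid points; this is the subtle step of the proof.

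Finally, for item~3, Theorem~\ref{thm:approx} applied pointwise in $\omega$ to $R_{-\tau(\omega)}(X(\omega))$ gives
\[
d_H\bigl(R_{-\tau}(X),\tilde{X}_0^{(N)}\bigr)\leq(6+2\sqrt{2})\sin\biggl(\frac{\pi}{2N}\biggr)\operatorname{diam}(X)
\]
almost surely. Since the Hausdorff distance is isometry-invariant, applying $R_\tau$ first yields $d_H(X, X_\infty^{(N)}) \to 0$ a.s., and composing with $R_z$ gives $d_H(Y, Y_\infty^{(N)}) \to 0$ a.s.\ with $Y = R_z(X)$. Almost-sure convergence in the intended metric $d_P$ (understood as a rotation-compatible set-distance, consistent with the fact that $Y_\infty^{(N)}$ approximates the isotropized version $Y$ of $X$) will then follow from this Hausdorff bound together with the almost-sure finiteness of $\operatorname{diam}(X)$.
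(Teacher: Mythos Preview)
Your proposal is correct and follows essentially the same approach as the paper: the paper's own proof is extremely terse, simply observing for item~1 that $\tilde{X}_\infty^{(N)}$ is an isotropized set of $X_\infty^{(N)}$, declaring items~2--4 to be direct consequences of Theorem~\ref{thm:2approxuptorot}, and for item~5 noting that $X$ and $\tilde{X}_\infty^{(N)}$ coincide up to a random rotation. Your plan unpacks each of these steps with considerably more care (tracking the interpolation property from Theorem~\ref{thm:approx}, the rotation formula, and the isometry-invariance of $d_H$), but the underlying logic is the same pointwise application of the deterministic results; your caution about sign conventions is warranted, since the paper itself is inconsistent between Proposition~\ref{prop:feretprop}.\ref{it:rot} and the proof of Theorem~\ref{thm:2approxuptorot}.
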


\begin{proof}
$\:$
\begin{enumerate}
\item[1.] It is easy to see that $\tilde{X}_\infty^{(N)}$ is an
isotropized set of $X_\infty^{(N)}$. Consequently, it is an isotropic
random regular zonotope.
\item[2--4.]These properties are direct consequences of Theorem~\ref
{thm:2approxuptorot}.
\item[5.] Suppose that $X$ is a random regular zonotope. Then $X$ and
$\tilde{X}_\infty^{(N)}$ coincide up to a random rotation, and any face
length vector of the one is a face length vector of the other
one.\qedhere
\end{enumerate}
\end{proof}

In order to describe the shape of $X$, the best way would be to
characterize the central face length distribution of $\tilde{X}_\infty
^{(N)}$ from information available on $X$. Unfortunately, there is no
way to compute the characteristics of the random process $H_{\tilde
{X}_\infty^{(N)}}$ from those of $H_{X}$. However, the approximation of
the first- and second-order moments of $H_{\tilde{X}_\infty^{(N)}}$ can
be estimated from those of the Feret diameters of an isotropized set of
$X$ (i.e., $H_{Y}$, where $Y$ is an isotropized set of $X$).

\begin{proposition}[Approximation of the moments of the central
face length distribution] Let $X$ be a symmetric random convex set,
$Y$ its isotropized set, $\tilde{X}_\infty^{(N)}$ the $\mathcal
{C}_\infty^{(N)}$-isotropic approximation of $X$, and $\alpha$ the
central face length vector of $\tilde{X}_\infty^{(N)}$.
\begin{enumerate}
\item An approximation of the first- and second-order moments of $\alpha
$ is given by
\begin{equation}
\hat{\mathbb{E}}[\alpha ]=
\frac{\pi}{2N}\mathbb{E}\bigl[H_Y^{(N)}\bigr],
\end{equation}
\begin{equation}
\hat{V}[\alpha]=\frac{1}{N}K(0)^{-1}V\bigl[H_Y^{(N)}
\bigr].
\end{equation}
Such an approximation is consistent as $N\rightarrow\infty$: $\hat
{\mathbb{E}}[\alpha]-\mathbb{E}[\alpha]\rightarrow0 $ and $\hat
{V}[\alpha]-V[\alpha]\rightarrow0 $ as $N\rightarrow\infty$.
\item If $\hat{\alpha}$ is a positive random vector satisfying $V[\hat
{\alpha}]=\hat{V}[\alpha],\;\mathbb{E}[\hat{\alpha}]=\hat{\mathbb
{E}}[\alpha]$, and $\eta$ an independent uniform variable on $[0,\pi]$,
then the random set $\hat{X}$ defined as
\begin{equation}
\hat{X}=R_\eta\Biggl(\bigoplus_{i=1}^N
\hat{\alpha}_i S_{\theta_i}\Biggr)
\end{equation}
satisfies $\mathbb{E}[U(X)]=\mathbb{E}[U(\hat{X})]$.
\end{enumerate}
\end{proposition}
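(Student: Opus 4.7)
The plan is to recognize $\tilde{X}_\infty^{(N)}$ as an isotropic regular random zonotope so that the preceding theorem on central face length moments applies directly. Indeed, by construction the uniform rotation $z$ used to isotropize $X$ combines modulo $\pi$ with the deterministic rotation $\tau$ from the $\mathcal{C}_\infty^{(N)}$-approximation to yield a uniform angle independent of the underlying $\tilde{X}_0^{(N)}$. Moreover, the pathwise inequality $\operatorname{diam}(\tilde{X}_\infty^{(N)}) \leq \tfrac{\sqrt{2}}{\sqrt{2}-1}\operatorname{diam}(X)$ (proved inside Theorem~\ref{thm:approx} and transported by the rotation invariance of the diameter) ensures that the standing hypothesis $\mathbb{E}[U(X)^2]<\infty$ yields $\mathbb{E}[U(\tilde{X}_\infty^{(N)})^2]<\infty$ uniformly in~$N$. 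Therefore the central-moments theorem provides the exact identities $\mathbb{E}[\alpha_i] = \tfrac{\pi}{2N}\mathbb{E}[H_{\tilde{X}_\infty^{(N)}}(\theta_i)]$ and $V[\alpha] = \tfrac{1}{N}K(0)^{-1}V[H_{\tilde{X}_\infty^{(N)}}^{(N)}]$, and the proposed $\hat{\mathbb{E}}[\alpha]$, $\hat{V}[\alpha]$ are obtained by substituting $H_Y$ for $H_{\tilde{X}_\infty^{(N)}}$ in these formulas.

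For the consistency assertion the plan is to invoke the deterministic bounds from Theorems~\ref{thm:approx} and~\ref{thm:2approxuptorot} pathwise, giving $\|H_Y - H_{\tilde{X}_\infty^{(N)}}\|_\infty \leq (6+2\sqrt{2})\sin(\tfrac{\pi}{2N})\operatorname{diam}(Y)$ almost surely. Since $\mathbb{E}[\operatorname{diam}(Y)] \leq \tfrac{1}{2}\mathbb{E}[U(X)] < \infty$, taking expectations immediately delivers the first-moment consistency $|\hat{\mathbb{E}}[\alpha_i] - \mathbb{E}[\alpha_i]| \leq \tfrac{\pi}{2N}(6+2\sqrt{2})\sin(\tfrac{\pi}{2N})\mathbb{E}[\operatorname{diam}(Y)] \to 0$. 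For the second moment, the algebraic identity $|ab-cd| \leq |b|\,|a-c|+|c|\,|b-d|$ combined with the diameter bound reduces the entrywise control of $V[H_Y^{(N)}] - V[H_{\tilde{X}_\infty^{(N)}}^{(N)}]$ to a bound of order $\sin(\tfrac{\pi}{2N})\mathbb{E}[\operatorname{diam}(Y)^2]$, finite by the $L^2$ hypothesis.

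The hard part will be propagating this componentwise bound through the prefactor $\tfrac{1}{N}K(0)^{-1}$. Since $K(0)$ is the symmetric positive-definite circulant matrix whose first column samples the $\pi$-periodic function $k_S$ on the regular grid $\{\theta_i\}$, its eigenvalues coincide with the discrete Fourier transform of those samples; viewed as Riemann sums, they are asymptotically $\tfrac{N}{\pi}$ times the Fourier coefficients of $k_S$, which are all strictly positive by an argument analogous to the one given for $F^{(N)}$ in the proof of Theorem~\ref{thm:approx}. Consequently $\tfrac{1}{N}K(0)^{-1}$ has spectral norm bounded uniformly in $N$, and the entrywise convergence of the input vector transfers to entrywise convergence of the output, giving $\hat{V}[\alpha] - V[\alpha] \to 0$.

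For item~2 a short direct calculation suffices. By the zonotope perimeter formula \eqref{eq:PerimeterRZ} together with the rotation invariance of the perimeter, $U(\hat{X}) = 2\sum_{i=1}^N \hat{\alpha}_i$ almost surely, hence $\mathbb{E}[U(\hat{X})] = 2N\,\hat{\mathbb{E}}[\alpha]_1 = \pi\mathbb{E}[H_Y(\theta_1)]$ by the first-moment formula. Cauchy's perimeter formula $U(Y) = \int_0^\pi H_Y(\theta)\,d\theta$ combined with the stationarity of $H_Y$ (a consequence of the isotropy of $Y$, by Proposition~\ref{prop:isotropFeret}) gives $\mathbb{E}[U(Y)] = \pi\mathbb{E}[H_Y(\theta_1)]$, and since $Y = R_z(X)$ is a rotation of $X$ one has $U(Y) = U(X)$ pathwise, so $\mathbb{E}[U(\hat{X})] = \mathbb{E}[U(X)]$ as claimed.
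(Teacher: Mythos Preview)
Your approach is the same as the paper's: identify $\tilde{X}_\infty^{(N)}$ as an isotropic regular random zonotope, invoke the preceding moment theorem to get the exact formulas with $H_{\tilde{X}_\infty^{(N)}}$, and argue consistency via the pathwise Hausdorff bound $d_H(Y,\tilde{X}_\infty^{(N)})\le (6+2\sqrt{2})\sin(\tfrac{\pi}{2N})\operatorname{diam}(Y)$. For item~2 your computation via the perimeter formula, Cauchy's formula, and stationarity of $H_Y$ is exactly what the paper does. You are considerably more thorough than the paper, which for item~1 simply asserts that consistency is ``trivial'' once $\mathbb{E}[H_Y^{(N)}]\to\mathbb{E}[H_{\tilde{X}_\infty^{(N)}}^{(N)}]$ and $V[H_Y^{(N)}]\to V[H_{\tilde{X}_\infty^{(N)}}^{(N)}]$.

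One caution on the step you flag as the hard part. Your claim that $\tfrac{1}{N}K(0)^{-1}$ has spectral norm bounded uniformly in $N$ does not follow merely from all Fourier coefficients of $k_S$ being strictly positive: since $k_S$ is the circular autocorrelation of $|\sin|$, its Fourier coefficients decay like $m^{-4}$, so the smallest eigenvalue of $K(0)$ (corresponding to frequency $m\sim N/2$) is of order $N^{-3}$, and $\|\tfrac{1}{N}K(0)^{-1}\|$ actually grows with $N$. The paper does not address this either, so you have not fallen short of it; but if you want a complete argument you would need to exploit the specific structure of the difference vector $V[H_Y^{(N)}]-V[H_{\tilde{X}_\infty^{(N)}}^{(N)}]$ (e.g.\ that it arises from sampling a smooth $\pi$-periodic function and hence has negligible high-frequency content) rather than a crude operator-norm bound.
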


\begin{proof}
$\:$
\begin{enumerate}
\item[1.] The consistence of the estimate is trivial regarding that $
\mathbb{E}[H_Y^{(N)}]\rightarrow\mathbb{E}[H_{\tilde{X}_\infty
^{(N)}}^{(N)}]$ and $ V[H_Y^{(N)}]\rightarrow V[H_{\tilde{X}_\infty
^{(N)}}^{(N)}]$
as $N\rightarrow\infty$.
\item[2.] Let $\hat{\alpha}$ be a positive random vector satisfying
$V[\hat{\alpha}]=\hat{V}[\alpha],\;\mathbb{E}[\hat{\alpha}]=\hat{\mathbb
{E}}[\alpha]$, and $\eta$ an independent uniform variable on $[0,\pi]$.
Because of the isotropy of $Y$, the vector $ \mathbb{E}[H^{(N)}_Y]$ has
all its components equal to $\frac{1}{\pi}\mathbb{E}[(U(Y)]$, and the
random set $\hat{X}=R_\eta(\bigoplus_{i=1}^N \hat{\alpha}_i S_{\theta
_i})$ satisfies
\begin{align*}
\mathbb{E}\bigl[U(\hat{X})\bigr]&=2\sum_{i=1}^N
\mathbb{E}[\hat{\alpha}_i]
\\
&=\mathbb{E}\bigl[U(Y)\bigr]
\\
&=\mathbb{E}\bigl[U(X)\bigr].\qedhere
\end{align*}
\end{enumerate}
\end{proof}

\begin{remark}
Firstly, note that the quantities $\mathbb{E}[H_Y^{(N)}]$ and
$V[H_Y^{(N)}]$ are easily obtained from the mean and autocovariance of
$H_X$ by using property~\ref{prop:isotroVersionMomentsFeret}.
The approximations $\hat{\mathbb{E}}[\alpha]$ and $\hat{V}[\alpha]$ can
be regarded as the characteristics of the central face length vector of
an isotropic random regular zonotope $\hat{X}$, which has the same
Feret diameter on the $\theta_i$ as an isotropized set of $X$. In
particular, such a zonotope has the same mean perimeter as $X$.

Furthermore, if $X$ is an $N$th random regular zonotope, then such
quantities coincide with those of a face length vector of $X$.
Consequently it is more interesting to use the $\mathcal{C}_\infty
^{(N)}$-isotropic approximation when $X$ is assumed to be an $N$th
random regular zonotope.
\end{remark}

\section{Conclusions and prospects}
In this paper, we proposed different approximations of a symmetric
convex set as a zonotope. These approximations have been further
generalized to random symmetric convex sets. We have shown that a
random convex set can be approximated as precisely as we want as a
random zonotope in terms of the Hausdorff distance.
More specifically, for a random symmetric convex set $X$, the first-
and second-order moments of the face length vector of its zonotope
approximation can be computed from the first- and second-order moments
of the Feret diameter process of $X$.

This work involves several perspectives. The first one would be to get
higher moments of the central face length distribution and to
generalize this work in higher dimension. One potential application of
this work would be to describe the primary grain of the germ--grain
model. Indeed, in a large class of such models, there exist estimators
for the moments of the Feret diameter of the primary grain \cite{GSI}.
In particular, we prospect to apply this to the images of oxalate
ammonium crystals \xch{modeled}{modelled} by the Boolean model (see \citep
{GSI,ICSIA}). However, we need to study the estimators involved by the
zonotope approximation in those germ--grain models.
\bibliographystyle{vmsta-mathphys}

\end{document}